




\documentclass[11pt]{amsart}


\usepackage{amsmath, amsthm, graphics,setspace,xypic,tikz,hyperref}
\usepackage{latexsym}
\usepackage{mathtools}
\usepackage{eucal}
\usepackage{caption}

\usepackage{amssymb}
\usepackage{color}
\usepackage{amscd}
\usepackage{palatino}
\usepackage{latexsym}
\usepackage{epsfig}
\usepackage{graphicx}
\usepackage{amsfonts}
\usepackage{psfrag}
\usepackage[all]{xy}
\usepackage{youngtab, ytableau}


\usepackage{url}
\usepackage{cite}

\usepackage[margin=1.3in]{geometry}

\definecolor{ltgrey}{RGB}{180, 187, 198}


\input xy
\xyoption{all}
\UseComputerModernTips


\oddsidemargin=0pt
\evensidemargin=0pt
\topmargin=0in
\setlength{\textwidth}{6.5in}

\numberwithin{equation}{section}
\numberwithin{figure}{section}


\newtheorem{theorem}{Theorem}[section]
\newtheorem{lemma}[theorem]{Lemma}
\newtheorem{proposition}[theorem]{Proposition}
\newtheorem{question}[theorem]{Question}
\newtheorem{corollary}[theorem]{Corollary}

\newtheorem{example}[theorem]{Example}
\newtheorem{assumption}[theorem]{Assumption}

\makeatletter
\newtheorem*{rep@theorem}{\rep@title}
\newcommand{\newreptheorem}[2]{%
\newenvironment{rep#1}[1]{%
 \def\rep@title{#2 \ref{##1}}%
 \begin{rep@theorem}}%
 {\end{rep@theorem}}}
\makeatother
\newreptheorem{theorem}{Theorem}
\newreptheorem{proposition}{Proposition}

\theoremstyle{definition}

\newtheorem{definition}[theorem]{Definition}


\newcommand{\C}{{\mathbb{C}}}

\newcommand{\B}{\mathcal{B}}

\newcommand{\g}{\mathfrak{g}}
\newcommand{\h}{\mathfrak{h}}

\renewcommand{\b}{\mathfrak{b}}

\DeclareMathOperator{\Ad}{Ad}
\DeclareMathOperator{\ad}{ad}

%
%

%
%
\usepackage{multicol}
\usepackage{color} 

\definecolor{gold}{rgb}{0.85,.66,0}
\definecolor{cherry}{rgb}{0.9,.1,.2}
\definecolor{burgundy}{rgb}{0.8,.2,.2}
\definecolor{orangered}{rgb}{0.85,.3,0}
\definecolor{orange}{rgb}{0.85,.4,0}
\definecolor{olive}{rgb}{.45,.4,0}
\definecolor{lime}{rgb}{.6,.9,0}
\definecolor{green}{rgb}{.2,.7,0}
\definecolor{grey}{rgb}{.4,.4,.2}
\definecolor{brown}{rgb}{.4,.3,.1}

%
%


\setlength{\marginparwidth}{0.7in}



%
%

\def\PP{{\mathbb P}}
\def\grass{\mathbb{G}}
\def\fl{\mathsf{Flag}}

\def\mult{\mathsf{mult}}

\def\br{\mathsf{Br}}
\def\c{{\mathbf c}}

\def\l{\mathfrak{l}}
\def\z{\mathfrak{z}}

\newcommand{\fg}{\mathfrak{g}}
\newcommand{\fb}{\mathfrak{b}}

\def\fln{\mathsf{Flag}(n)}

\def\Bru{\mathsf{Br}}



\begin{document}

\title{Which Schubert Varieties are Hessenberg Varieties?}

\author{Laura Escobar}
\address{Department of Mathematics and Statistics\\ Washington University in St. Louis \\ One Brookings Drive \\ St. Louis, Missouri  63130 \\ U.S.A. }
\email{laurae@wustl.edu}

\author{Martha Precup}
\address{Department of Mathematics and Statistics\\ Washington University in St. Louis \\ One Brookings Drive \\ St. Louis, Missouri  63130 \\ U.S.A. }
\email{martha.precup@wustl.edu}

\author{John Shareshian}
\address{Department of Mathematics and Statistics\\ Washington University in St. Louis \\ One Brookings Drive \\ St. Louis, Missouri  63130 \\ U.S.A. }
\email{jshareshian@wustl.edu}

%



\begin{abstract}
After proving that every Schubert variety in the full flag variety of a complex reductive group $G$ is a general Hessenberg variety, we show that not all such Schubert varieties are adjoint Hessenberg varieties.  In fact, in types A and C, we provide pattern avoidance criteria implying that the proportion of Schubert varieties that are adjoint Hessenberg varieties approaches zero as the rank of $G$ increases.  We show also that in type A, some Schubert varieties are not isomorphic to any adjoint Hessenberg variety.
\end{abstract}

\maketitle

\setcounter{tocdepth}{1}
\tableofcontents

\section{Introduction}

Hessenberg varieties have been studied by applied mathematicians, combinatorialists, geometers, representation theorists, and topologists.  See \cite{Abe-Horiguchi2019} for a survey of some recent developments.  Our goal is to understand better the structure of these varieties, and in particular what restrictions on such structure exist.  To this end, we address herein a question raised by Tymoczko: 
\begin{center}Is every Schubert variety in a full flag variety a Hessenberg variety?  \end{center}
As we will discuss below, conditions known to be satisfied by the most closely studied Hessenberg varieties are also satisfied by Schubert varieties. So, Tymoczko's question is pertinent. 
There are several possible interpretations of the question.  Moreover, there are several ways to define a Hessenberg variety, giving rise to progressively more general classes of varieties.  The answer to Tymoczko's question depends on the chosen definition, as we shall see.

Let $G$ be a connected, reductive complex algebraic group.  Fix a Borel subgroup $B \leq G$ and a maximal torus $T \leq B$.  Let $\g,\b,$ and $\h$ be, respectively, the Lie algebras of $G,B,$ and $T$.  We write $Ad$ for the adjoint representation of $G$ on $\g$.

Let $N=N_G(T)$ be the normalizer of $T$ in $G$ and let $W=N/T$ be the associated Weyl group.  The flag variety $\B:=G/B$ is the union of Schubert cells $C_w$, over all $w \in W$.  For each $w$, the Schubert variety $X_w$ is the closure of $C_w$ in $\B$.

Our first definition of a Hessenberg variety is as follows.

\begin{definition} \label{ahvdef}
Given $x \in \g$ and a subspace $H$ of $\g$ such that $[\b,H] \subseteq H$, the \textbf{adjoint Hessenberg variety} $\B(x,H)$ consists of those $gB \in \B$ such that $Ad(g^{-1})(x) \in H$.
\end{definition}

Adjoint Hessenberg varieties were defined and studied by De Mari, Procesi and Shayman in \cite{DPS1992}, after being defined and studied for $G=GL_n(\C)$ only by De Mari and Shayman in \cite{DS1988}.  In both \cite{DS1988} and \cite{DPS1992}, it is assumed that $\b \subseteq H$ and that $x$ is a generic (regular semisimple) element of $\g$.  We make neither assumption here, as otherwise the topology of $\B(x,H)$ is restricted considerably.  Indeed, it follows from the results in \cite{DS1988,DPS1992} that if $\b \subseteq H$ and $x$ is regular semisimple, then the Euler characteristic $\chi(\B(x,H))$ is equal to $|W|$.  In particular, the only Schubert variety in $\B$ that is a Hessenberg variety under these assumptions is $\B$ itself.  

A larger class of varieties than that given in Definition \ref{ahvdef} is defined in \cite{GKM2006} by Goresky, Kottwitz and MacPherson, who allow an arbitrary representation of $G$, rather than restricting to the adjoint representation.

\begin{definition} \label{ghvdef}
Let $\psi:G \rightarrow GL(V)$ be a (finite-dimensional, rational) representation.  Given $x \in V$ and a $B$-invariant subspace $H$ of $V$, the \textbf{Hessenberg variety} $\B(x,H)$ consists of those $gB \in \B$ such that $\psi(g^{-1})x \in H$.
\end{definition}

We will always use the modifier ``adjoint" when referring to the Hessenberg varieties described in Definition \ref{ahvdef} and sometimes use the modifier ``general" when discussing the Hessenberg varieties described in Definition \ref{ghvdef}.  We will use repeatedly, and without reference, the fact that if  $H$ is a subspace of $\g$, then $[\b,H] \subseteq H$ if and only if $H$ is $Ad(B)$-invariant (see for example \cite[Definition 8.1.22]{Kumar}).     So, every adjoint Hessenberg variety is a general Hessenberg variety.  The subspace $H$ appearing in either definition is called a \textbf{Hessenberg space}. 

With definitions in hand, we turn to possible interpretations of Tymoczko's question, and list three precise questions.
\begin{question}[\bf the equality problem]\label{quest.E}  Is it true that for every $G,B,$ and Schubert variety $X_w \subseteq \B=G/B$, there is a Hessenberg variety $\B(x,H) \subseteq \B$ such that $X_w=\B(x,H)$?
\end{question}
\begin{question}[\bf the isomorphism problem]\label{quest.I}
Is it true that for every $G,B$ and Schubert variety $X_w \subseteq \B=G/B$, there is a Hessenberg variety $\B(x,H) \subseteq \B$ such that $X_w$ is isomorphic with $\B(x,H)$?
\end{question}
\begin{question}[\bf the general isomorphism problem]\label{quest.GI}  Is it true that for every $G,B$ and Schubert variety $X_w \subseteq G/B$, there exist a complex reductive group $G^\ast$ with Borel subgroup $B^\ast$ and a Hessenberg variety $\B^\ast(x^\ast,H^\ast) \subseteq G^\ast/B^\ast$ such that $X_w$ is isomorphic with $\B^\ast(x^\ast,H^\ast)$?
\end{question}
The answer to Question~\ref{quest.E} (and therefore to Questions~\ref{quest.I} and \ref{quest.GI}) is ``Yes" for general Hessenberg varieties, as our first main result shows.  (Relevant terminology will be discussed in Sections \ref{sec.preliminaries}  and \ref{sec.highestwt}.)

\begin{theorem}[See Theorem~\ref{dem} below]\label{dem.intro}
Let $\lambda$ be a strictly dominant weight for $G$, with associated highest weight representation $\psi:G \rightarrow GL(V(\lambda))$ and highest weight vector $v_\lambda$.  Given $w \in W$, let $\dot{w}$ be a representative of $w$ in $N$, and let $H_{w^{-1}(\lambda)}$ be the Demazure module generated by $\dot{w}^{-1}(v_\lambda)$.  Then $X_w=\B(v_\lambda,H_{w^{-1}(\lambda)})$.
\end{theorem}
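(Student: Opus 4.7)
The plan is to reduce the Hessenberg membership condition to a linear-algebraic condition in the representation $V(\lambda)$, and then invoke the classical identification of Schubert varieties as preimages of projective subspaces cut out by Demazure modules.

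Set $u := w^{-1}$ and write $E_u := H_{w^{-1}(\lambda)} = U(\b)\cdot\dot{u}(v_\lambda)$, the Demazure module associated with $u$; as such, it is a $B$-stable subspace of $V(\lambda)$. Since $\lambda$ is strictly dominant, the orbit map
\[
\iota : \B \to \PP(V(\lambda)),\qquad gB \mapsto [\psi(g)(v_\lambda)],
\]
is a closed embedding, and $\PP(E_u)$ is a closed subvariety of $\PP(V(\lambda))$. The key classical input I would invoke (going back to Demazure, and treated in Kumar's book on Kac--Moody groups) is the identity $X_u = \iota^{-1}\bigl(\PP(E_u)\bigr)$. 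Granted this, the defining condition $\psi(g^{-1})(v_\lambda) \in E_u$ is equivalent to $\iota(g^{-1}B) \in \PP(E_u)$, hence to $g^{-1}B \in X_u$.

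It remains to translate a condition on $g^{-1}B$ back to a condition on $gB$. Because inversion on $G$ is an isomorphism of varieties it commutes with Zariski closure, and $(Bw^{-1}B)^{-1} = BwB$, so
\[
g \in \overline{BwB} \iff g^{-1} \in \overline{Bw^{-1}B} = \overline{BuB},
\]
i.e.\ $gB \in X_w \iff g^{-1}B \in X_u$. Combining this with the previous step yields the desired equality $\B(v_\lambda,H_{w^{-1}(\lambda)}) = X_w$.

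The main obstacle is the identification $X_u = \iota^{-1}(\PP(E_u))$: one inclusion is straightforward from the $B$-stability of $E_u$ together with $X_u = \overline{B\dot{u}B}/B$, while the reverse inclusion relies crucially on the strict dominance of $\lambda$ (so that $\iota$ is actually a closed embedding, rather than a morphism factoring through a proper partial flag variety, and so that the extremal $T$-weights $v(\lambda)$ of $V(\lambda)$ appearing in $E_u$ are exactly those with $v\le u$ in Bruhat order). Everything else is bookkeeping handling the asymmetry between $g$ and $g^{-1}$ built into Definition~\ref{ghvdef}.
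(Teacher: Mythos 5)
Your proof is correct and takes essentially the same approach as the paper. The underlying classical input is identical --- that the extremal weight vectors $v_{\tau\lambda}$ contained in the Demazure module $H_{w^{-1}(\lambda)}$ are exactly those with $\tau \leq_\br w^{-1}$; you invoke it in its geometric form $X_u = \iota^{-1}(\PP(E_u))$, whereas the paper cites the module-theoretic statement of Bern\v{s}te\u{\i}n--Gel{'}fand--Gel{'}fand (\cite[Theorem 2.9]{BGG}), and your $g \leftrightarrow g^{-1}$ translation plays the same role as the paper's indexing of the Schubert cells of the Hessenberg variety by $C_{\tau^{-1}}$.
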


Theorem~\ref{dem.intro} follows directly from a result of Bernstein, Gelfand and Gelfand in \cite{BGG}.

The situation is more interesting when we consider adjoint Hessenberg varieties.  We obtain negative results of various types.  Before describing these, we discuss why the consideration of Schubert varieties is appropriate in the study of adjoint Hessenberg varieties. 

We know of no restrictions on the structure of general Hessenberg varieties, and it is reasonable to wonder what such restrictions might exist.  In addition to Theorem \ref{dem.intro}, one can point to Section 9 of \cite{OblomkovYun}, in which various interesting curves and surfaces are shown to be general Hessenberg varieties, as evidence that such restrictions are not so easy to come by.  
A certain class of general Hessenberg varieties is examined by Chen-Vilonen-Xue~\cite{Chen-Vilonen-Xue} in their study of Springer Theory for symmetric spaces.

Adjoint Hessenberg varieties are a different matter.   \color{black} Given $x \in \g$, we consider the usual Jordan decomposition $x=x_s+x_n$ with $x_s$ semisimple and $x_n$ nilpotent.  If $x_n$ is regular in a Levi subalgebra of $\g$, then $\B(x,H)$ admits an affine paving for every Hessenberg space $H \subseteq \g$.  This is proved under the assumption $\b \subseteq H$ by Tymoczko in type A (see \cite{Tymoczko2006}) and by the second author for arbitrary $G$ (see \cite{Precup2013}), although the assumption is not necessary for the relevant arguments in either paper. When $H$ is a nilpotent subspace, Fresse proved $\B(x,H)$ is paved by affines for all $x$ when $G$ is a classical group in \cite{Fresse2016} and Xue has extended these results to groups of type $G_2$, $F_4$, and $E_6$ in \cite{Xue2020}.    So, in some sense, $\B(x,H)$ is paved by affines for ``most" $x,H$, and indeed for all $x,H$ in type A.  
This restricts considerably the structure of adjoint Hessenberg varieties.  
One can ask if there are any obvious additional restrictions.  As Schubert varieties admit affine pavings, Tymoczko's question is a good starting point in the search for such constraints.

We turn now to our results.  First, if the root system $\Phi$ for $G$ has an irreducible component not of type $A_1$ or $A_2$, then there are some $y \in W$ such that the Schubert variety $X_y \subseteq \B$ is not equal to any adjoint Hessenberg variety in $\B$.

\begin{theorem}[See Theorem~\ref{thm.not.adjoint} below]\label{thm.not.adjoint.intro}
If some nonabelian simple ideal of $\g$ is isomorphic with neither $\mathfrak{sl}_2(\C)$ nor $\mathfrak{sl}_3(\C)$, then there is some $w \in W$ such that no adjoint Hessenberg variety in $\B$ is equal to $X_w$.  In particular, assume that the root system $\Phi$ for $G$ is irreducible and not of type $A_1$ or $A_2$.  Let $\theta$ be the highest root in $\Phi$ and let $W_\theta$ be the stabilizer of $\theta$ in $W$.  Let $w_0$ be the longest element of $W$.  If $y$ is a nonidentity element of $W_\theta$, then no adjoint Hessenberg variety in $\B$ is equal to $X_{yw_0}$.
\end{theorem}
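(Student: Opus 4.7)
The plan is to prove the second assertion first and deduce the first from it. For the deduction, using the decomposition $\g = \z \oplus \bigoplus_i \g_i$ into the center and simple ideals, one gets $\B = \prod_i \B_i$, with Schubert varieties decomposing as $X_w = \prod_i X_{w_i}$ and adjoint Hessenberg varieties decomposing in parallel (using that an $\mathrm{Ad}(B)$-stable $H \subseteq \bigoplus_i \g_i$ decomposes as $\bigoplus_i (H \cap \g_i)$ when the simple factors are pairwise non-isomorphic, with a small extra argument for isomorphic factors). So it suffices to produce a bad Schubert variety in a distinguished simple factor not of type $A_1$ or $A_2$, and then extend by identities in the other Weyl groups.

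For the main statement, assume $\Phi$ is irreducible and not of type $A_1$ or $A_2$, fix $y \in W_\theta \setminus \{e\}$, and suppose for contradiction that $X_{yw_0} = \B(x, H)$. Writing $x = x_0 + \sum_{\alpha \in \Lambda} x_\alpha$ using the $T$-weight decomposition (where $x_0 \in \h$ and each $x_\alpha \in \g_\alpha \setminus \{0\}$) and $H = H_0 \oplus \bigoplus_{\beta \in \Sigma} \g_\beta$, the condition $[\b, H] \subseteq H$ forces $\Sigma \subseteq \Phi$ to be closed under addition of positive roots: if $\beta \in \Sigma$, $\gamma \in \Phi^+$, and $\beta + \gamma \in \Phi$, then $\beta + \gamma \in \Sigma$. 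Matching $T$-fixed points of $X_{yw_0}$ and $\B(x,H)$ yields the biconditional
\[
v \leq yw_0 \iff v^{-1}\Lambda \subseteq \Sigma \text{ and } v^{-1}x_0 \in H_0
\]
for every $v \in W$. Reducing to the case $x_0 = 0$, applying this at $v = e$, $v = yw_0$, and $v = w_0$ gives $\Lambda \subseteq \Sigma$, $w_0 y^{-1} \Lambda \subseteq \Sigma$, and (because $y \neq e$ forces $w_0 \not\leq yw_0$) some $\alpha_0 \in \Lambda$ with $w_0 \alpha_0 \notin \Sigma$.

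To derive the contradiction, I would split into cases according to whether this $\alpha_0$ is fixed by $y$. If some $\alpha_0 \in \Lambda$ with $w_0\alpha_0 \notin \Sigma$ satisfies $y\alpha_0 = \alpha_0$, then $w_0 y^{-1}\alpha_0 = w_0\alpha_0$, which is simultaneously in $\Sigma$ (by the $v = yw_0$ constraint) and not in $\Sigma$, an immediate contradiction. Otherwise every such $\alpha_0$ satisfies $y\alpha_0 \neq \alpha_0$, and I would compare $w_0 y^{-1}\alpha_0 \in \Sigma$ against $w_0\alpha_0 \notin \Sigma$: whenever the difference $w_0\alpha_0 - w_0 y^{-1}\alpha_0$ can be realized as a chain of single positive-root additions staying inside $\Phi$, the $\Phi^+$-closure of $\Sigma$ forces $w_0\alpha_0 \in \Sigma$, the required contradiction. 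The hard part will be the residual case in which no such chain exists for any admissible $\alpha_0$; I would handle it by bringing in additional Bruhat constraints from further $T$-fixed points $vB$, chosen so that $v^{-1}$ relates particular roots of $\Lambda$ to specific roots, and exploiting that $y \in W_\theta$ together with the exclusion of types $A_1, A_2$ guarantees that $W_\theta$ is rich enough to force one of the resulting root configurations to violate the $\Phi^+$-closure of $\Sigma$. Carrying out this case analysis uniformly across all possible pairs $(\Lambda, \Sigma)$ is the principal technical obstacle.
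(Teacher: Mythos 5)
Your setup is reasonable but the proposal has a genuine gap that the paper's proof circumvents with a key idea you are missing. You restrict attention to $T$-fixed points $vB$ of the Schubert variety, which yields only constraints of the form ``$v \leq_\br yw_0$ implies $\dot v^{-1}\cdot x \in H$.'' The paper instead exploits the \emph{full} $B$-invariance of $X_{yw_0}$: since $B\dot y\dot w_0 B/B \subseteq X_{yw_0}$, one may conclude $(b\dot y\dot w_0)^{-1}\cdot x \in H$ for \emph{every} $b \in B$, not just $b = e$. This extra freedom is then combined with the Lie--Kolchin theorem (Lemma~\ref{liekolchinlemma}): if the simple component $x_j$ of $x$ is nonzero, one can find $b \in B_j$ so that $b^{-1}\cdot x$ projects nontrivially onto the highest root space $\g_{\theta_j}$. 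Pushing this through $\dot y^{-1}$ (which preserves the $\g_{\theta_j}$-component since $y \in W_{\theta_j}$) and then $\dot w_{0,j}^{-1}$ forces $\g_{-\theta_j} \subseteq H$, and since $\g_{-\theta_j}$ generates the full simple ideal $\l_j$ as a $B_j$-module, $\l_j \subseteq H$, collapsing to the contradiction $\dot w_{0,j}B \in X_{yw_{0,j}}$. Your proposal has no analogue of this conjugation step, which is why you are left with an unresolved ``residual case'' that would require a hard uniform combinatorial analysis over all pairs $(\Lambda,\Sigma)$; the paper avoids that analysis entirely.

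Two further gaps in the details of your proposal: the asserted reduction to $x_0 = 0$ is not justified (the obstruction to $w_0 \leq yw_0$ could live entirely in the semisimple part, with $w_0^{-1}x_0 \notin H_0$ while $w_0^{-1}\Lambda \subseteq \Sigma$), and the characterization of Hessenberg spaces as ``$\Sigma$ closed under addition of positive roots'' omits constraints: $[\b,H]\subseteq H$ also forces, e.g., $\g_\gamma \subseteq H$ for any $\gamma \in \Phi^+$ with $\gamma(h) \neq 0$ for some $h \in H_0$, and forces coroots $h_\beta \in H_0$ when both $\pm\beta \in \Sigma$. If you want to salvage a $T$-fixed-point-only argument you would have to build these in, and it is unclear it can be done uniformly; adopting the Lie--Kolchin move and using non-torus points of $X_{yw_0}$ is the cleaner path.
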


We observe in the case where $\Phi$ is irreducible, there is some function $f$ such that $[W:W_\theta] \geq f(\dim_\C\h)$ and $\lim_{n \rightarrow \infty} f(n)=\infty$.  So, Theorem \ref{thm.not.adjoint.intro} applies to an eventually negligible portion of the Schubert varieties in $\B$.  When $\Phi$ is of type A or type C, we can do much better.

\begin{theorem}[See Theorem~\ref{pattern} below]\label{pattern.intro}
Assume $G = GL_n(\C)$ or $G=SL_n(\C)$ and let $B$ be the Borel subgroup of $G$ consisting of upper triangular matrices.  Fix $w$ in the Weyl group $W=S_n$ of $G$.  If there exist $x \in \g$ and a Hessenberg space $H \subseteq \g$ such that $\B(x,H)=X_w$, then $w$ avoids the pattern $[4231]$.
\end{theorem}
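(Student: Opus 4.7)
The plan is to prove the contrapositive: if $w$ contains the pattern $[4231]$, then $X_w$ cannot equal any adjoint Hessenberg variety. Suppose for contradiction that $X_w = \B(x,H)$ for some $x \in \g$ and some Hessenberg space $H \subseteq \g$. Because $e \leq w$, the identity coset $eB$ lies in $X_w$, and hence $x = \Ad(e^{-1})x \in H$. Since $X_w$ is $T$-invariant, so is $\B(x,H)$; this imposes the relation $\Ad(T)x \subseteq H$ and allows me, after conjugating $x$, to assume that its semisimple part $x_s$ lies in $\h$.

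Next, I translate the pattern into a Bruhat-order fact. If $[4231]$ appears at positions $i_1 < i_2 < i_3 < i_4$ in $w$, then a direct application of the tableau criterion shows that all six transpositions $(i_k, i_\ell)$ with $1 \leq k < \ell \leq 4$ lie at or below $w$ in Bruhat order. The $T$-fixed points of the adjoint Hessenberg variety $\B(x,H)$ are precisely $\{vB : \Ad(v^{-1})x \in H\}$, so these six Bruhat inequalities yield the six membership conditions $\Ad((i_k, i_\ell)^{-1})x \in H$, together with $x \in H$.

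The crux is now to extract a contradiction: I aim to show that these seven membership conditions, combined with the $\b$-stability of $H$, force $\Ad(v^{-1})x \in H$ for some $v \not\leq w$ --- a natural candidate being a $4$-cycle or the longest element $[4321]$ of $S_4$ lifted to $S_n$ along coordinates $\{i_1, i_2, i_3, i_4\}$. This is the main obstacle: making the incompatibility explicit requires pinning down the forbidden element $v$. The intuition driving the argument is that $[4231]$ corresponds to the maximal transposition $(1,4) \in S_4$, so having $\B(x,H)$ accommodate all six ``sub-transpositions'' without spilling over to a larger element is impossible. The most promising line of attack is a reduction to the minimal instance $n = 4$, $w = [4231]$ in $GL_4(\C)/B$, by showing that the constraints above effectively restrict $(x,H)$ to the $\mathfrak{gl}_4$-block on coordinates $\{i_1, i_2, i_3, i_4\}$. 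Once there, a finite case analysis over the possible Jordan types of $x_n$ and $\h$-weights of $x_s$ completes the argument.
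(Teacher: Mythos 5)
Your high-level strategy is sound and matches the paper's: show that if $w$ contains $[4231]$, then the Hessenberg variety containing $C_{w^{-1}}$ (equivalently, equal to $X_{w^{-1}}$, after the standard observation that $w$ contains $[4231]$ iff $w^{-1}$ does) must also contain a coset $\dot v^{-1}B$ with $v \not\leq_{\br} w$. You even guess the right shape of the forbidden element — the one realizing $[4321]$ on the four pattern positions — which is exactly the paper's $\tau$ (obtained from $w$ by swapping the values in the positions of the $2$ and the $3$ of the $[4231]$ occurrence).

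However, the proof has a genuine gap, which you acknowledge: you do not actually show that $\Ad(\dot v^{-1})x \in H$ for your candidate $v$. The steps you propose to fill it are unlikely to succeed as stated. First, you only impose membership conditions coming from seven torus-fixed points (the identity and six transpositions $(i_k,i_\ell)$). This records far less information than the containment $C_{w^{-1}} \subseteq \B(x,H)$, which says $\dot w \, b \cdot x \in H$ for \emph{every} $b \in B$. The paper leans heavily on this richer family of constraints: it uses a movement lemma (its Lemma~\ref{lem.linearalg}) showing that if some off-diagonal coordinate $\c_{ij}(x)$ is nonzero then an appropriate $b\in B$ produces $\c_{k\ell}(b\cdot x)\neq 0$ for any $k\leq i$, $\ell\geq j$, together with the upper-order-ideal structure of $H$ (Lemma~\ref{lem.elsh}). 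With only torus-fixed point data you have no way to activate these conjugation arguments. Second, the proposed reduction to a $\mathfrak{gl}_4$-block is not a well-defined operation: a Hessenberg space $H\subseteq\mathfrak{sl}_n(\C)$ has no canonical restriction to the $4\times 4$ block on rows/columns $\{i_1,\ldots,i_4\}$ that remains $\b$-stable for the smaller Borel, and $x$ need not even have support in that block. Third, the claim that $T$-invariance lets you conjugate $x$ so that $x_s \in \h$ is not justified — $T$-conjugation fixes the semisimple part's eigenspaces but does not diagonalize them, and even $B$-conjugation does not generically put $x_s$ into $\h$. The paper instead simply decomposes $x=x'+s$ with $s\in\h$ the diagonal part (no normalization of $x$), and separately shows $\dot\tau\cdot x'\in H$ and $\dot\tau\cdot s\in H$ via a careful case analysis over coordinate pairs, which is where the real work lives and which your proposal does not carry out.
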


It follows from Theorem \ref{pattern.intro} and the Marcus--Tardos Theorem (see \cite{MT2004}) that the number of Schubert varieties in $SL_n(\C)/B$ that are (equal to) adjoint Hessenberg varieties grows at most exponentially in $n$.  So, in type A, the portion of Schubert varieties that are equal to adjoint Hessenberg varieties is eventually negligible.  A more precise enumerative result appears in Section \ref{sec.pattern-adjoint}, along with a proof of the theorem.  Given Theorem \ref{pattern.intro}, one might hope that the set of Type A Schubert varieties that are (equal to) adjoint Hessenberg varieties is characterized by pattern avoidance.  This is not the case.  We show in Section \ref{sec.pattern-adjoint} that $X_{[14235]}$ is an adjoint Hessenberg variety in $SL_5(\C)/B_5(\C)$, but $X_{[1423]}$ is not an adjoint Hessenberg variety in $SL_4(\C)/B_4(\C)$.

We also obtain a negative answer to the isomorphism question for adjoint Hessenberg varieties in type A, although our result applies to far fewer Schubert varieties than Theorem~\ref{pattern.intro}.

\begin{theorem}[See Theorem~\ref{adjoint} below]\label{adjoint.intro} Suppose $n\geq 6$.
Assume that $G = GL_n(\C)$ or $G=SL_n(\C)$ and $B$ is the subgroup of $G$ consisting of upper triangular matrices.  Let $w_0$ be the longest element of the Weyl group $W=S_n$, and for $i \in [n-1]$, let $s_i \in W$ be the transposition $(i,i+1)$.  If $3 \leq i \leq n-3$, then there do not exist $x \in \g$ and Hessenberg space $H \subseteq \g$ such that $\B(x,H)$ is isomorphic with $X_{s_iw_0}$.
\end{theorem}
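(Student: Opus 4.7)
The plan is to argue by contradiction: assume that an adjoint Hessenberg variety $\B(x,H)$ is isomorphic to $X_{s_iw_0}$, and combine Theorem~\ref{pattern.intro} with a geometric invariant comparison to derive a contradiction.

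First I would verify that $s_iw_0$ contains the pattern $4231$ whenever $3\le i\le n-3$. In one-line notation, $s_iw_0 = [n,\,n{-}1,\dots,i{+}2,\,i,\,i{+}1,\,i{-}1,\dots,1]$, and the four positions $(1,\,n{-}i,\,n{-}i{+}1,\,n)$ carry values $(n,\,i,\,i{+}1,\,1)$, whose relative order is $4,2,3,1$. Hence by Theorem~\ref{pattern.intro}, $X_{s_iw_0}$ is not equal, as a subvariety of $\B$, to any adjoint Hessenberg variety. The substance of the present theorem is therefore the upgrade from equality to isomorphism.

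Next I would identify structural invariants of $X_{s_iw_0}$ that should be incompatible with adjoint Hessenberg structure. A first invariant is the automorphism group: a routine check in one-line notation shows the left descent set of $s_iw_0$ is $\{s_j : j \neq i\}$, so the stabilizer of $X_{s_iw_0}$ under the left $G$-action on $\B$ is the maximal parabolic $P^{(i)} \supseteq B$ with simple reflection $s_i$ removed, and thus $P^{(i)}/Z(G)$ embeds into $\mathrm{Aut}(X_{s_iw_0})$. A second is the singular locus: since $s_iw_0$ contains the pattern $4231$, $X_{s_iw_0}$ is singular, and the projection $\B \to \Gr(i,n)$ realizes $X_{s_iw_0}$ as a $\mathsf{Flag}(i)\times\mathsf{Flag}(n-i)$-bundle over the singular Schubert divisor in $\Gr(i,n)$, giving a singular locus of specific codimension and fiber structure.

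The final step is to rule out the possibility that a proper adjoint Hessenberg variety $\B(x,H)$ of dimension $\binom{n}{2}-1$ realizes either of these structures under isomorphism. The main obstacle is making this comparison rigorous: abstract automorphisms of $\B(x,H)$ need not come from $G$-actions on $\B$, and singular loci of adjoint Hessenberg varieties can be quite varied. A plausible approach is a case analysis on the Jordan decomposition $x = x_s + x_n$, using the affine-paving results of~\cite{Tymoczko2006} and~\cite{Precup2013} to control the structure of $\B(x,H)$ cell-by-cell, and then arguing that no combination of $(x,H)$ can simultaneously realize the automorphism group, singular locus, and dimension required to match $X_{s_iw_0}$.
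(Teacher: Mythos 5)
There is a genuine gap: your proposal is a plan, not a proof, and the invariants you propose are never computed in a way that yields a contradiction. The pattern-avoidance observation is correct and the one-line computation of $s_iw_0$ is right, but, as you acknowledge, Theorem~\ref{pattern.intro} only rules out \emph{equality}, and your proposed upgrade to isomorphism stalls at exactly the step you flag as ``the main obstacle.'' The invariants you suggest -- the automorphism group and the singular locus -- are both hard to control on the Hessenberg side. You correctly note that abstract automorphisms of $\B(x,H)$ need not come from $G$, which undercuts the automorphism-group comparison; and the claim that $X_{s_iw_0}$ is a $\fl(i)\times\fl(n-i)$-bundle over a Schubert divisor in $\Gr(i,n)$ is not justified (and is not in fact how this Schubert variety decomposes). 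Nothing in the proposal reduces the problem to something checkable.

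The paper's proof goes through a much sharper and more elementary invariant: the Euler characteristic. Proposition~\ref{eulercharprop} shows that any irreducible adjoint Hessenberg variety of codimension one in $\B$ has Euler characteristic divisible by $(n-2)!$. This is established by first proving (Lemma~\ref{jtcor}) that a codimension-one adjoint Hessenberg variety either has $H=H(\overline{-\theta})$ or has $x$ with an $(n-1)$-dimensional eigenspace, handling the second case directly (Lemma~\ref{largeeigenlem}), and then observing via the affine pavings of \cite{Tymoczko2006,Precup2013} that $W\setminus{\mathcal C}(x,H(\overline{-\theta}))$ is a union of cosets of $\Stab_W(\theta)\cong S_{n-2}$ (Lemma~\ref{jtprop}). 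Combined with $\chi(X_{s_iw_0})=n!-i!(n-i)!$ (Lemma~\ref{schuec}), the divisibility constraint $(n-2)!\mid i!(n-i)!$ fails for all $3\le i\le n-3$ with $n\ge 6$ except $(n,i)\in\{(8,3),(8,5)\}$, and those two are eliminated by a separate Betti number check. Your instinct to use the Jordan decomposition of $x$ and the affine-paving machinery of \cite{Tymoczko2006,Precup2013} is in the right direction -- that is exactly what drives Lemmas~\ref{jtcor} and~\ref{jtprop} -- but without the specific divisibility statement and the structural reduction to $H(\overline{-\theta})$, the argument does not close.
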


We prove in Section \ref{sec.typeA} that under the conditions given in Theorem \ref{adjoint.intro}, there is no irreducible adjoint Hessenberg variety $\B(x,H)$ with the same Betti numbers as $X_{s_iw_0}$.  In fact, we show that unless $(n,i) \in \{(8,3),(8,5)\}$, no such $\B(x,H)$ has the same Euler characteristic as $X_{s_iw_0}$.  
We use the following result, reminiscent of the restriction $\chi(\B(x,H))=|W|$ obtained by De Mari, Procesi and Shayman in \cite{DPS1992} under the assumption that $\b \subseteq H$ and $x$ is regular semisimple.

\begin{proposition}[See Proposition~\ref{eulercharprop} below]\label{eulercharprop.intro}
Let $G$ and $B$ be as in Theorem \ref{adjoint.intro}.  If the adjoint Hessenberg variety $\B(x,H)$ is irreducible and of codimension one in $\B$, then $\chi(\B(x,H))$ is divisible by $(n-2)!$.
\end{proposition}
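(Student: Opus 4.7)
The plan is to exhibit, for any irreducible codimension-one adjoint Hessenberg variety $\B(x,H)$, a surjective morphism from $\B(x,H)$ onto a smaller base variety whose generic fiber is isomorphic to $\B_{n-2}$, the complete flag variety of $GL_{n-2}(\C)$. Since $\chi(\B_{n-2}) = (n-2)!$ and the Euler characteristic is multiplicative over Zariski-locally trivial fiber bundles, this will force $(n-2)! \mid \chi(\B(x,H))$.

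The map in question is the natural projection $\pi : \B \to \mathsf{Fl}(1, n-1; \C^n)$ sending a complete flag $(V_1, \ldots, V_{n-1})$ to the pair $(V_1, V_{n-1})$; its fiber over $(V_1, V_{n-1})$ is canonically the complete flag variety of the $(n-2)$-dimensional quotient $V_{n-1}/V_1 \cong \C^{n-2}$, isomorphic to $\B_{n-2}$. First I would classify the codimension-one $B$-invariant subspaces of $\g$: for $G = SL_n(\C)$ the only one is $H_\theta := \g \setminus \g_{-\theta}$, where $\theta$ is the highest root (so $\g_{-\theta}$ is the span of $E_{n,1}$); for $G = GL_n(\C)$ the only additional candidate is $\mathfrak{sl}_n$, but $\B(x, \mathfrak{sl}_n) \in \{\B, \emptyset\}$, never of codimension one in $\B$. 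For $H = H_\theta$, the Hessenberg condition $\Ad(g^{-1})(x) \in H_\theta$ translates to $xV_1 \subseteq V_{n-1}$, a condition depending only on $(V_1, V_{n-1})$; hence $\pi$ restricts to a $\B_{n-2}$-bundle
\[\pi|_{\B(x, H_\theta)} : \B(x, H_\theta) \twoheadrightarrow \mathcal{Y}_x := \{(V_1, V_{n-1}) \in \mathsf{Fl}(1,n-1;\C^n) : xV_1 \subseteq V_{n-1}\},\]
giving $\chi(\B(x, H_\theta)) = \chi(\mathcal{Y}_x) \cdot (n-2)!$, which is divisible by $(n-2)!$.

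For general $(x,H)$ with $\B(x,H)$ irreducible of codimension one, I reduce to the $H_\theta$ case. If $H \subseteq H_\theta$, then $\B(x,H) \subseteq \B(x, H_\theta)$, which has codimension at most one in $\B$; either $\B(x,H) = \B(x, H_\theta)$ (so the bundle argument above concludes) or $\B(x,H)$ is an irreducible component of the reducible divisor $\B(x, H_\theta)$. In the latter case the cohomology class $c_1(L_\theta) = \sigma_{s_1 w_0} + \sigma_{s_{n-1} w_0}$ of $\B(x, H_\theta)$ forces its irreducible components to be of the form $\{gB : V_1 \subseteq K\}$ or $\{gB : V_{n-1} \supseteq L\}$ for some hyperplane $K$ or line $L$ in $\C^n$. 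Any such component is a $G$-translate of the Schubert divisor $X_{s_1 w_0}$ or $X_{s_{n-1} w_0}$, each of which is itself a $\B_{n-1}$-bundle over $\PP^{n-2}$ via $\B \to \PP^{n-1}$ (or its dual), yielding Euler characteristic $(n-1)(n-1)! = (n-1)^2 (n-2)!$, also divisible by $(n-2)!$.

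The remaining case, $H \not\subseteq H_\theta$ (equivalently $\g_{-\theta} \subseteq H$), which forces $H$ of codimension at least two and $x$ non-regular, is the main technical obstacle of the proof. I would handle it by a separate analysis, likely showing via the Jordan decomposition of $x$ and the structure of $B$-invariant subspaces containing $\g_{-\theta}$ that either such a Hessenberg variety is necessarily reducible (so excluded by hypothesis) or its Euler characteristic can be computed directly and seen to be divisible by $(n-2)!$.
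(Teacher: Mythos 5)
Your approach is genuinely different from the paper's and, for the main case, more elegant. The paper first shows (via a dimension bound obtained by fibering over $\PP^{n-1}$) that either $H=H(\overline{-\theta})$ or $x$ has an eigenspace of dimension $n-1$, then treats the two branches separately: the large-eigenspace case by an explicit decomposition into two pieces identified with (translates of) $X_{s_1w_0}$ and $X_{s_{n-1}w_0}$, and the $H=H(\overline{-\theta})$ case by an affine-paving/coset-counting argument in the Weyl group. Your fibration $\pi\colon\B\to\mathsf{Fl}(1,n-1;\C^n)$ with fiber $\B_{n-2}$, together with the observation that the defining condition $xV_1\subseteq V_{n-1}$ depends only on $(V_1,V_{n-1})$, replaces the paving/coset argument with a single multiplicativity of Euler characteristics, which is cleaner. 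Your divisor-class argument for the components in the reducible case is also correct, though you should spell out the ingredients you are leaning on: that the effective cone of $G/B$ is generated by the Schubert divisor classes $\sigma_{s_iw_0}$, and that any effective divisor of class $\sigma_{s_1w_0}$ (resp.\ $\sigma_{s_{n-1}w_0}$) is the pullback of a hyperplane from $G/P_1\cong\PP^{n-1}$ (resp.\ its dual) because $H^0(G/B,L_{\omega_1})=H^0(G/P_1,\mathcal{O}(1))$.

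The one genuine error is your ``remaining case.'' If $H\not\subseteq H_\theta$, then since $H$ is a $T$-stable subspace the negative highest root space $\g_{-\theta}=\C\{E_{n,1}\}$ lies in $H$; but $E_{n,1}$ generates the entire $B$-module $\mathfrak{sl}_n(\C)$, so $\mathfrak{sl}_n(\C)\subseteq H$ and hence $H\in\{\mathfrak{sl}_n(\C),\g\}$, giving $\B(x,H)\in\{\varnothing,\B\}$. So this case is vacuous under the codimension-one hypothesis, not the ``main technical obstacle'' you describe, and it certainly does not force $\codim H\geq 2$ (it forces $\codim H\leq 1$). Once you replace that paragraph with this one-line observation---which is exactly the content of Lemma~\ref{smallcodim}(1) in the paper---your argument is complete and would be a valid, somewhat shorter alternative to the paper's proof.
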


We do not know the answer to the general isomorphism problem for adjoint Hessenberg varieties, and pose the following question.  Observe that $[653421] \in S_6$ is the ``first" Schubert variety to which Theorem \ref{adjoint.intro} applies.

\begin{question} \label{653421q}
Is there an adjoint Hessenberg variety (in a flag variety for an arbitrary reductive group) that is isomorphic to the Schubert variety $X_{[653421]}$ in the flag variety of type $A_5$?
\end{question}

One might also consider Question \ref{653421q} for the Schubert variety $X_{[4231]}$ in the flag variety of type $A_3$.  It follows from either Theorem \ref{thm.not.adjoint.intro} or Theorem \ref{pattern.intro} that $X_{[4231]}$ is not equal to any adjoint Hessenberg variety, and $X_{[4231]}$ is the ``first" type A Schubert variety to which these theorems apply.  However, there is an adjoint Hessenberg variety in $SL_4(\C)/B$ with the same Betti numbers as $X_{[4231]}$ (see Example~\ref{ex.Betti} below).  

We have a type C version of Theorem \ref{pattern.intro}.  Consider the embedding $\phi:Sp_{2n}(\C) \rightarrow SL_{2n}(\C)$ whose image stabilizes the alternating form $\langle .,. \rangle$ given by  
$$
\langle e_i,e_j \rangle=\left\{ \begin{array}{cc} 1 & i<j=2n+1-i, \\ -1 & i>j=2n+1-i \\ 0 & \mbox{otherwise}. \end{array} \right.
$$ 
Here $e_1,\ldots,e_{2n}$ denotes the standard basis of $\C^{2n}$. One obtains from this the embedding $\phi^\ast$ of the type C Weyl group into $S_{2n}$ whose image consists of those permutations $w$ satisfying $w_{2n+1-i}=2n+1-w_i$ for all $i$.

\begin{theorem}[See Theorem~\ref{patternc} below]\label{patternc.intro}
Let $G=Sp_{2n}(\C)$ and let $B \leq G$ be the Borel subgroup whose image under $\phi$ consists of upper triangular matrices.  Fix $w \in W$.  If there exist $x \in \g=\mathfrak{sp}_{2n}(\C)$ and a subspace $H$ of $\g$ such that $[\b,H] \subseteq H$ and $\B(x,H)=X_w$, then $\phi^\ast(w)$ avoids the pattern $[4231]$.
\end{theorem}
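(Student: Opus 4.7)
The plan is to adapt the argument behind Theorem~\ref{pattern.intro} directly to the symplectic setting. Suppose, toward a contradiction, that $\B(x,H) = X_w$ for some $x \in \g = \mathfrak{sp}_{2n}(\C)$ and some subspace $H \subseteq \g$ with $[\b,H] \subseteq H$, while $\phi^\ast(w) \in S_{2n}$ contains a $[4231]$ pattern at positions $i_1 < i_2 < i_3 < i_4$. Using the symplectic symmetry $\phi^\ast(w)_{2n+1-j} = 2n+1-\phi^\ast(w)_j$, I would first translate this pattern into a signed subword of $w$ (viewed as a signed permutation) and use it to select a torus fixed point $vB \in X_w$, for some $v \leq w$ in Bruhat order, whose local geometry is controlled by the pattern.

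Next, I would carry out a local analysis of $\B(x,H)$ at $vB$. In a $T$-invariant affine neighborhood of the fixed point, the defining condition $Ad(g^{-1})(x) \in H$ cuts out $\B(x,H)$ as the zero locus of a specific collection of linear forms on $T_{vB}\B$, indexed by those negative roots of $\mathfrak{sp}_{2n}(\C)$ not appearing in the weight decomposition of $H$. The corresponding step in the proof of Theorem~\ref{pattern.intro} presumably compares this Hessenberg-style tangent-cone description against the combinatorial description of the tangent cone of $X_w$ and shows that a $[4231]$ pattern forces the Schubert tangent cone to be strictly larger than any subspace cut out by such a linear system. The goal is to run the same comparison inside $\mathfrak{sp}_{2n}(\C)$ and derive an analogous contradiction.

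A natural alternative would be to extend $H$ to the smallest subspace $H' \subseteq \mathfrak{sl}_{2n}(\C)$ containing $H$ and closed under bracket with the upper triangular Borel subalgebra of $\mathfrak{sl}_{2n}(\C)$, and then invoke Theorem~\ref{pattern.intro} for the type A adjoint Hessenberg variety $\B(x,H')$. This shortcut fails on dimension grounds: the length of $w$ in the Weyl group of type C generally differs from the number of inversions of $\phi^\ast(w)$, so $X_w$ and the type A Schubert variety $X_{\phi^\ast(w)}$ have different dimensions, and no choice of $H'$ can realize $\B(x,H') = X_{\phi^\ast(w)}$. The main obstacle is therefore to redo the local tangent-cone analysis intrinsically in type C, carefully bookkeeping the effect of the symplectic involution, which pairs root spaces of $\mathfrak{sl}_{2n}(\C)$ into single root spaces of $\mathfrak{sp}_{2n}(\C)$, and confirming that the type A linear-algebraic obstruction survives after this identification rather than being absorbed by the additional symmetry constraints on allowable Hessenberg spaces.
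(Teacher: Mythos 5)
Your proposal misses the paper's actual route, and the alternative you dismiss is, with a twist, exactly the one the paper takes.

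First, the type A argument (Theorem~\ref{pattern}) that you propose to mimic is not a tangent-cone comparison. Lemma~\ref{extratau} is an elementary algebraic argument: given that a Schubert cell $C_{w^{-1}}$ is \emph{contained} in an adjoint Hessenberg variety $\B(x,H)$, and given a $[4231]$ pattern in $w$, one explicitly produces a torus fixed point $\dot\tau^{-1}B$ with $\tau\not\leq_\br w$ that is forced to lie in $\B(x,H)$, using Lemmas~\ref{lem.linearalg} and~\ref{lem.elsh} to push root vectors around with upper-triangular conjugation. No local tangent-cone description of either variety enters the proof, so the "Hessenberg-style tangent-cone description" step you describe does not exist and is not an accurate target to imitate in type C.

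Second, and more importantly, you reject the idea of passing to $\mathfrak{sl}_{2n}(\C)$ on the grounds that one cannot arrange $\B_A(x,H')=X_{\phi^\ast(w)}$. That is true, but it is not what is needed. The crucial observation is that Lemma~\ref{extratau} only requires \emph{containment} $C_{w^{-1}}\subseteq\B_A(x,H)$, not equality of varieties. The paper's Theorem~\ref{thm: C to A} constructs a type A Hessenberg space $H$ from $H_C$ with $\phi'(\B_C(x,H_C))=\B_A(x,H)^\sigma$, and Proposition~\ref{prop: cell contained C then A} then shows $B\dot w^{-1}B\subseteq\B_A(x,H)$. One then runs the Lemma~\ref{extratau} machinery inside type A. The remaining subtlety you would still need to handle is that the resulting $\tau$ need not be a signed permutation; the paper applies the swap both at the pattern positions $i<j<k<\ell$ and at the mirrored positions $\ell'<k'<j'<i'$ to obtain $v=(w_{j'},w_{k'})(w_j,w_k)w\in W_C$ with $v\not\leq_\br w$ and $\dot v^{-1}B\in\B_A(x,H)$, then descends back to type C via the $\sigma$-fixed-point identification. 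Your plan, as written, neither carries out this folding-and-lifting step nor supplies a replacement, so it does not yet amount to a proof.
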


Our proof of Theorem \ref{patternc.intro} utilizes an interesting fact relating adjoint Hessenberg varieties in types A and C.  Let $E$ be the linear transformation on $\C^{2n}$ such that $\langle v,w \rangle=v^{\mathsf{\mathsf{tr}}}Ew$ for all $v,w \in \C^{2n}$, and define the automorphism $\sigma$ of $G_A:=SL_{2n}(\C)$ by
$$
\sigma(A)=E(A^{\mathsf{\mathsf{tr}}})^{-1}E^{-1}.
$$
Then $G_C:=\phi(Sp_{2n}(\C))$ is the group of $\sigma$-fixed points in $G_A$.  Moreover, if $B_A$ is the group of upper triangular matrices in $G_A$, then $\sigma$ fixes $B_A$ setwise, and $B_C:=(B_A)^\sigma$ is the image under $\phi$ of the Borel subgroup of $Sp_{2n}(\C)$ described in Theorem \ref{patternc.intro}.  We observe that the action of $\sigma$ on $G_A$ induces an automorphism of the variety $\B_A:=G_A/B_A$.  The map $\phi^\prime$ from $\B_C:=G_C/B_C$ to $\B_A:=G_A/B_A$ sending $gB_C$ to $\phi(g)B_A$ is a well-defined embedding, and $(\B_A)^\sigma=\phi^\prime(\B_C)$ (see for example \cite[Proposition 6.1.1.1]{Lakshmibai-Raghavan}).

\begin{theorem}[See Theorem \ref{thm: C to A} below] \label{foldhess}
If ${\mathcal V}_C$ is an adjoint Hessenberg variety in the type C flag variety $\B_C$, then there is some adjoint Hessenberg variety ${\mathcal V}_A$ in the type A flag variety $\B_A$ such that $\phi^\prime({\mathcal V}_C)=({\mathcal V}_A)^\sigma$.
\end{theorem}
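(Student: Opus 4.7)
The plan is to produce, from the type C data $(x_C, H_C)$ defining $\mathcal{V}_C = \B_C(x_C, H_C)$, a matching adjoint type A Hessenberg variety $\mathcal{V}_A = \B_A(x_A, H_A)$ such that $\phi^\prime(\mathcal{V}_C) = \mathcal{V}_A^\sigma$. I would take $x_A$ to be $x_C$ viewed in $\mathfrak{g}_A$ via $d\phi$ (so $x_A \in \mathfrak{g}_A^{d\sigma}$) and aim for an $\operatorname{Ad}(B_A)$-invariant, $d\sigma$-stable subspace $H_A \subseteq \mathfrak{g}_A$ satisfying $H_A \cap d\phi(\mathfrak{g}_C) = d\phi(H_C)$. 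Granted such an $H_A$, the conclusion is formal: $\mathcal{V}_A$ is automatically $\sigma$-stable, so $\mathcal{V}_A^\sigma = \mathcal{V}_A \cap \phi^\prime(\B_C)$; any point $\phi(g)B_A$ of this intersection (with $g \in G_C$) satisfies $\operatorname{Ad}(g^{-1})(x_C) \in H_A \cap d\phi(\mathfrak{g}_C) = d\phi(H_C)$, so $gB_C \in \mathcal{V}_C$, while the reverse inclusion $\phi^\prime(\mathcal{V}_C) \subseteq \mathcal{V}_A^\sigma$ follows from $H_C \subseteq H_A$ together with the equivariance of $\operatorname{Ad}$ under $\phi$.

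To construct $H_A$, I would use the folding $\pi: \mathfrak{h}_A^* \to \mathfrak{h}_C^*$ obtained by restricting linear functionals. One checks directly that $\pi$ sends $\Phi_A$ onto $\Phi_C$, maps $\Phi_A^+$ into $\Phi_C^+$, and has fibers equal to the $d\sigma^*$-orbits on $\Phi_A$; each fiber is either a singleton $\{e_i - e_{2n+1-i}\}$ (mapping to a long root $\pm 2a_i$) or a two-element orbit (mapping to a short root $\pm a_i \pm a_j$). Decomposing $H_C = M_C \oplus \bigoplus_{\beta \in \Psi_C}\mathfrak{g}_{C,\beta}$, where $\Psi_C \subseteq \Phi_C$ is upward-closed under addition by positive roots and $M_C \subseteq \mathfrak{h}_C$, set $\Psi_A := \pi^{-1}(\Psi_C)$; this is $d\sigma^*$-stable and upward-closed in $\Phi_A$. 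Define
$$H_A := M_A \oplus \bigoplus_{\alpha \in \Psi_A} \mathfrak{g}_{A,\alpha},$$
where $M_A \subseteq \mathfrak{h}_A$ is the span of $M_C$ together with the type A coroots $h_\gamma$ for each $\gamma \in \Psi_A$ having $-\gamma \in \Psi_A$. Then $H_A$ is $d\sigma$-stable since $\Psi_A$ and $M_A$ both are, and it visibly contains $d\phi(H_C)$.

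The intersection identity $H_A \cap d\phi(\mathfrak{g}_C) = d\phi(H_C)$ splits along the $\mathfrak{h}_C$-weight grading: a type C root space $\mathfrak{g}_{C,\beta}$ sits inside $H_A$ exactly when the entire orbit $\pi^{-1}(\beta) \subseteq \Psi_A$, which happens iff $\beta \in \Psi_C$; on the Cartan part, the $d\sigma$-fixed component of each coroot $h_\gamma$ for $\gamma$ in a two-element orbit is proportional to the type C coroot $h_{\pi(\gamma)}$, which lies in $M_C$ by applying $\operatorname{Ad}(B_C)$-invariance of $H_C$ to the bracket $[\mathfrak{g}_{C,\pi(\gamma)}, \mathfrak{g}_{C,-\pi(\gamma)}] = \mathbb{C}\, h_{\pi(\gamma)}$, with a parallel argument for singleton orbits. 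The main obstacle I anticipate is verifying that $H_A$ is genuinely $\operatorname{Ad}(B_A)$-invariant: beyond the straightforward closure coming from $\Psi_A$ being upward-closed, one must check that $\gamma'(h_\gamma) = 0$ for every $\gamma' \in \Phi_A^+ \setminus \Psi_A$ and every $\gamma \in \Psi_A$ with $-\gamma \in \Psi_A$. This should reduce to the type C identity $\pi(\gamma')(h_{\pi(\gamma)}) = 0$ (a direct consequence of $\operatorname{Ad}(B_C)$-invariance of $H_C$) via a case analysis tracking whether $\gamma$ and $\gamma'$ lie in singleton or two-element $\sigma^*$-orbits, exploiting the upward-closedness of $\Psi_C$ to rule out the problematic short/long configurations.
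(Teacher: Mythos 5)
Your overall plan is essentially the same as the paper's: both constructions take $\Psi_A = \pi^{-1}(\Psi_C)$ (the paper's $\Phi_H$), adjoin the coroots $h_\gamma$ with $\pm\gamma \in \Psi_A$ together with $M_C = H_C \cap \mathfrak{h}^\sigma$ to form the Cartan part, and then deduce the variety-level statement formally from $\sigma$-stability of $H_A$ and the identity $H_A^\sigma = H_C$. The formal deduction in your first paragraph is correct and matches the paper's proof of Theorem 6.1(2); the sketch of the intersection identity likewise matches the end of the paper's proof of Theorem 6.1(1).

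The gap is in your proposed verification that $H_A$ is $\operatorname{Ad}(B_A)$-invariant — specifically the claim that "$\gamma'(h_\gamma)=0$ for $\gamma'\in\Phi_A^+\setminus\Psi_A$ should reduce to the type C identity $\pi(\gamma')(h_{\pi(\gamma)})=0$." This reduction is not valid, because $h_\gamma$ is generically \emph{not} $\sigma$-fixed, so $\gamma'(h_\gamma)$ need not agree with $\pi(\gamma')(h_{\pi(\gamma)})=\gamma'\bigl(h_\gamma+\sigma(h_\gamma)\bigr)$ (up to scaling). Concretely, in $\mathfrak{sl}_8$ take $\gamma=\epsilon_1-\epsilon_2$ and $\gamma'=\epsilon_2-\epsilon_8$. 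Then $h_\gamma=E_{11}-E_{22}$ and $\gamma'(h_\gamma)=-1\neq 0$, whereas $\bar\sigma(h_\gamma)=E_{11}-E_{22}+E_{77}-E_{88}$ and $\gamma'\bigl(\bar\sigma(h_\gamma)\bigr)=-1-(-1)=0$, so the "type C identity" gives you no information in this case even though the conclusion you need ($\gamma'\in\Psi_A$) still must be established. One can in fact rescue it — here $\pi(\gamma')\geq -\pi(\gamma)$, so upward-closedness of $\Psi_C$ forces $\pi(\gamma')\in\Psi_C$ — but this is exactly the "problematic configuration" you wave at, and it requires an argument independent of the type C identity. The paper instead works entirely in the type A root system: it computes $\gamma'(h_{ij}) = \delta_{\ell i}-\delta_{\ell j}-\delta_{ki}+\delta_{kj}$ for $\gamma'=\epsilon_k-\epsilon_\ell$, notes that nonvanishing forces $\{i,j\}\cap\{k,\ell\}\neq\varnothing$, and then a short case analysis shows $k\leq j$ and $\ell\geq i$, so $\epsilon_j-\epsilon_i\leq \gamma'$ and upward-closedness of $\Psi_A$ finishes. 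The type C bracket argument is used only for the $M_C$ component of the Cartan part (where $h\in\mathfrak{h}^\sigma$ and the folding identity $\gamma'(h)=\varphi(\gamma')(h)$ does hold), not for the adjoined coroots.
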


The paper follows the outline presented above.  Section~\ref{sec.preliminaries} is devoted to background, notation, and terminology.  Our study of general Hessenberg varieties and a proof of Theorem~\ref{dem.intro} can be found in Section~\ref{sec.highestwt}, in which we prove also that the answer to the equality question (Question~\ref{quest.E}) is ``No'' for adjoint Hessenberg varieties in all Lie types.  We then focus our attention on adjoint Hessenberg varieties in the type A flag variety in Sections~\ref{sec.pattern-adjoint} and \ref{sec.typeA}, proving Theorems~\ref{pattern.intro} and~\ref{adjoint.intro}.  Finally, our study of adjoint Hessenberg varieties in the type C flag variety is undertaken in Section~\ref{sec.typeC}, in which we prove Theorems~\ref{patternc.intro} and~\ref{foldhess} .

\

\textbf{Acknowledgements:} We learned of Tymoczko's question regarding Schubert and Hessenberg varieties during an open problem session at BIRS Workshop 18w5130 on \textit{Hessenberg Varieties in Combinatorics, Geometry and Representation Theory} in October 2018.  The second and third authors are grateful for the hospitality of the Banff International Research Station and organizers of that workshop. We thank Reuven Hodges, Allen Knutson, and Jenna Rajchgot for helpful discussions. Escobar is partially supported by NSF Grant DMS 1855598. Precup is partially supported by NSF Grant DMS 1954001.  Shareshian is partially supported by NSF Grant DMS 1518389.

\


\section{Definitions, Notation, and Preliminary Results} \label{sec.preliminaries}

We review here various known results about algebraic groups, Weyl groups, flag varieties, and Hessenberg varieties.  A reader familiar with basic facts about these objects can skip this section and refer back when necessary.  Facts stated without reference or argument can be found in at least one of \cite{Bjorner-Brenti, Borel-LAG, Fulton-Harris, Humphreys-LieAlg}.  

As in the introduction, $G$ is a reductive algebraic group with Borel subgroup $B$ and maximal torus $T \leq B$.  The Lie algebras of $G,B,T$ are denoted, respectively, by $\fg, \fb,\h$.  The Weyl group of $G$ is $W:=N_G(T)/T$.  We fix a representative $\dot{w}\in N_G(T)$ for each Weyl group element $w \in W$.

\subsection{The root system and the Bruhat order} The Lie algebra $\fg$ admits a Cartan decomposition,
$$
\g=\h \oplus \bigoplus_{\gamma \in \Phi}\g_\gamma.
$$ 
Here $\Phi \subseteq \h^\ast$ is the root system for $\fg$, and each root space $\fg_\gamma$ is a $1$-dimensional subspace of $\fg$ satisfying
$
[h,x]=\gamma(h)x
$
whenever $h \in \h$ and $x \in \g_\gamma$.  We can choose a set of simple roots $\Delta \subseteq \Phi$ such that each $\gamma \in \Phi$ is either a non-negative linear combination of elements of $\Delta$ or a non-positive such combination.  This gives a decomposition $\Phi=\Phi^+ \sqcup \Phi^-$.  We may (and do) choose $\Delta$ so that
$$
\b=\h \oplus \bigoplus_{\gamma \in \Phi^+}\g_\gamma.
$$
There is a partial order on $\Phi$ given by
	\begin{equation}\label{eq-root-order}
	\beta \preceq \gamma \textup{ whenever } \gamma-\beta \textup{ is a nonnegative linear combination of positive roots.}
	\end{equation}
Each finite, irreducible root system $\Phi$ contains a unique maximal element with respect to this order called the \textbf{highest root} of $\Phi$ and denoted herein by $\theta\in \Phi^+$.  

If $\fg=\mathfrak{sl}_n(\C)$ and $\h$ the Cartan subalgebra of diagonal matrices, we write $\Phi=\{ \epsilon_i-\epsilon_j : 1\leq i,j \leq n \}$ with positive roots $\Phi^+ = \{ \epsilon_i -\epsilon_j\in \Phi : i<j \}$.  Here $\epsilon_i: \h \to \C$ denotes projection to the $i$-th diagonal entry. We assume furthermore that $\Delta = \{ \epsilon_i - \epsilon_{i+1} : 1\leq i \leq n-1 \}$.  The highest root of $\Phi$ in this case is $\theta = \epsilon_1-\epsilon_n$.

The restriction of the adjoint action of $G$ on $\g$ to $N_G(T)$ preserves the Cartan decomposition and factors through $T$.  Thus we get an action of $W$ on $\h^\ast$ that restricts to an action on $\Phi$.  For each $\alpha \in \Delta$, there is the simple reflection $s_\alpha \in W$, which acts on $\h^\ast$ as the reflection through the hyperplane orthogonal to $\alpha$.  In the type A case, if $\alpha= \epsilon_i -\epsilon_{i+1}$ we write $s_\alpha=s_i$ for the corresponding simple transposition exchanging $i$ and $i+1$ in $W= S_n$.   The set $S$ of simple reflections generates $W$, and the length $\ell(w)$ of $w \in W$ is the shortest length of a list of elements of $S$ (called a reduced word) whose product is~$w$.

The \textbf{Bruhat order} $\leq_\br$ is the partial order on $W$ defined by $v \leq_\br w$ if some reduced word for $v$ is a subword of some reduced word for~$w$. We write $w_0$ for the unique maximal element of $W$ in the Bruhat order.

\subsection{The flag variety and Schubert varieties} The group $G$ acts on the flag variety $\B:=G/B$ by translation.  Each $B$-orbit in this action contains exactly one coset $\dot{w}B$ ($w \in W$), and so
$$
\B=\bigsqcup_{w \in W} B\dot{w}B/B.
$$
We write $C_w$ for $B\dot{w}B/B$, called a \textbf{Schubert cell}.  Each Schubert cell is isomorphic to the affine space $\C^{\ell(w)}$.  The closure of $C_w$ in $G/B$ is the \textbf{Schubert variety} $X_w:= \overline{C_w}$.  We have that
$$
X_w=\bigsqcup_{v \leq_\br w}C_v.
$$

\subsection{Adjoint Hessenberg varieties} Let $H\subseteq \fg$ be a {Hessenberg space}, that is, a subspace such that $[\fb, H] \subseteq H$, and fix $x\in \fg$.  Equivalently, $H\subseteq \fg$ is $B$-invariant with respect to the Adjoint action (see for example \cite[Definition 8.1.22]{Kumar}).   Given a root $\gamma\in \Phi$, let $\pi_\gamma$ denote the projection of $\fg$ to the root space $\fg_\gamma$.  As $\dim_\C \fg_\gamma =1$ and the $T$-module $\h \oplus \bigoplus_{\beta \in \Phi\setminus \{\gamma\}} \fg_\gamma$ has no quotient isomorphic to $\fg_\gamma$, it follows from basic facts about direct sums that, for any Hessenberg space $H$ in $\fg$, 
\begin{eqnarray}\label{eqn.Hproj}
\textup{ if } \ \pi_\gamma (H)\neq 0 \  \textup{ then } \ \fg_\gamma \subseteq H.
\end{eqnarray}
We will use~\eqref{eqn.Hproj} and its consequences repeatedly below, frequently without reference.

As in the introduction, we define the adjoint Hessenberg variety corresponding to $x$ and Hessenberg space $H$ by
\[
\B(x,H) := \{gB\in \B : g^{-1} \cdot x \in H\}.
\]
Here $g\cdot x:= Ad(g)(x)$.

\subsection{Type A: The tableau criterion, pattern avoidance, and flags in $\C^n$} \label{sec.typeA-setup} Now we consider the case $G=SL_n(\C)$ or $G=GL_n(\C)$.  We record here various results that will be used below.
In this case, $W$ is isomorphic to the symmetric group $S_n$.  We write elements of $S_n$ in one-line notation, $$w=[w_1 w_1 \cdots  w_n]$$ where $w_i=w(i)$ in the natural action of $w$ on $[n]:=\{1,2,\ldots,n\}$.  

Given $w \in S_n$ and $1 \leq j \leq k \leq n$, we write $I_{j,k}(w)$ for the $j^{th}$ smallest element of $\{w_i : 1 \leq i \leq k\}$.  So, for example, if $w=[52341] \in S_5$, then $I_{2,4}(w)=3$, as $3$ is the second smallest element of $\{5,2,3,4\}$.  The following characterization of the Bruhat order can be found in~\cite{Bjorner-Brenti}.

\begin{theorem}[Tableau Criterion] \label{tc}
Let $v,w\in S_n$.  Then $v \leq_\br w$ if and only if $I_{j,k}(v) \leq I_{j,k}(w)$ for all $1 \leq j \leq k \leq n$.
\end{theorem}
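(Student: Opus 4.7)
The plan is to prove the two implications separately, with the backward direction requiring more care. In both cases I proceed by induction, and the key observation is how the tuples $(I_{1,k}(w), I_{2,k}(w), \dots, I_{k,k}(w))$ (namely, the sorted prefixes $w_1,\dots,w_k$) transform under an elementary Bruhat move.

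For the forward direction, I would induct on $\ell(w) - \ell(v)$, reducing to the case where $v \lessdot w$ covers. Such a covering relation is realized by $w = v \cdot t_{ab}$ for some transposition $t_{ab}$ with $a < b$, $v_a < v_b$, and no intermediate value $v_c$ ($a < c < b$) lying strictly between $v_a$ and $v_b$. For each $k$, the multiset $\{w_1,\dots,w_k\}$ coincides with $\{v_1,\dots,v_k\}$ unless $a \le k < b$, in which case it is obtained from $\{v_1,\dots,v_k\}$ by replacing the smaller value $v_a$ with the larger value $v_b$. In either case, the sorted tuple increases weakly componentwise, yielding $I_{j,k}(v) \le I_{j,k}(w)$ for every $j,k$.

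For the backward direction, I would induct on $\ell(w)$. If $v = w$ we are done, so suppose $v \ne w$. The aim is to find a position $b$ at which $w$ has a descent ($w_b > w_{b+1}$) such that $w' := w\cdot s_b$ still dominates $v$ in the tableau sense; then $\ell(w') = \ell(w)-1$, $w' \le_{\mathsf{Br}} w$, and induction gives $v \le_{\mathsf{Br}} w' \le_{\mathsf{Br}} w$. Since $\{w'_1,\dots,w'_k\} = \{w_1,\dots,w_k\}$ for $k \ne b$, the inequalities with $v$ are automatic except at $k = b$, where the sorted tuple drops by replacing $w_b$ with $w_{b+1}$. The task is therefore to locate a descent position $b$ such that for every $j$, the value $w_b$ (the $j$th smallest of $\{w_1,\dots,w_b\}$, for the appropriate $j$) is not already pinned down by $I_{j,b}(v) = I_{j,b}(w)$. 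Equivalently, I want to find a descent of $w$ above a $v$-inequality that is strict. One concrete route: choose the largest $k$ with $\{v_1,\dots,v_k\} \ne \{w_1,\dots,w_k\}$, observe that the maximum of the symmetric difference lies in $\{w_1,\dots,w_k\}$, and use this to produce the required descent.

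The main obstacle is this last step in the backward direction: showing that a suitable descent always exists, i.e., that one cannot be ``stuck'' with every descent of $w$ occurring at a prefix where all $I_{j,b}(v) = I_{j,b}(w)$ inequalities are equalities. The argument should derive a contradiction in the stuck case by showing it forces $v = w$, for instance by arguing that equality of sorted prefixes at every length $k$ where the one-line notation of $w$ changes value forces $w$ to agree with $v$ everywhere. Once this existence statement is in hand, both directions combine to give the Tableau Criterion.
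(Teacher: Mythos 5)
The paper cites Bj\"orner--Brenti for Theorem~\ref{tc} and gives no proof of its own, so the only question is whether your argument works. Your forward direction is fine: passing to a covering relation $v \lessdot w$, realized by a transposition of positions $a < b$ with $v_a < v_b$, and observing that for $a \le k < b$ the prefix set $\{w_1,\dots,w_k\}$ is obtained from $\{v_1,\dots,v_k\}$ by replacing the smaller value $v_a$ with $v_b$, does give $I_{j,k}(v) \le I_{j,k}(w)$ for all $j,k$.

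The backward direction contains a genuine flaw, not just an unfinished step. You propose to induct on $\ell(w)$ by finding a descent position $b$ of $w$ such that $w' = ws_b$ still satisfies $I_{j,k}(v) \le I_{j,k}(w')$ for all $j,k$, and you expect that if no such $b$ exists the ``stuck'' case should force $v = w$. That expectation is already false in $S_3$: take $v = [132]$ and $w = [231]$. All of the tableau inequalities hold and $v <_\br w$ (the reduced word $s_1s_2$ of $w$ contains the reduced word $s_2$ of $v$), but the only descent of $w$ is at $b=2$, and $ws_2 = [213]$ fails the required inequality because $I_{2,2}(v) = 3 > 2 = I_{2,2}(ws_2)$. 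So you are stuck with $v \ne w$. The underlying difficulty is that applying $s_b$ shifts several coordinates of the sorted length-$b$ prefix of $w$ at once, and a strict inequality at one coordinate (here $I_{1,2}(v) = 1 < 2 = I_{1,2}(w)$) gives no room at coordinates that were already tight (here $I_{2,2}$). The standard repair is to let $v$ move as well: choose any descent $b$ of $w$, pass to the pair $(\min(v,\,vs_b),\, ws_b)$, verify that the tableau inequalities persist for that pair (easiest via the rank functions $r_{p,q}$ from Section~\ref{sec.typeA-setup}, which reformulate the tableau criterion), and then invoke the lifting property of $\le_\br$ to deduce $v \le_\br w$ from the inductive hypothesis. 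In the example, $vs_2 = [123]$ does tableau-dominate $[213]$, so that induction closes; your scheme only ever lowers $w$ and cannot reach this instance.
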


Given $v \in S_m$ and $w \in S_n$ with $m\leq n$, we say that $w$ \textbf{contains the pattern} $v$ if there exist $1 \leq i_1<i_2<\ldots<i_m \leq n$ such that, for all $j,k \in [m]$, $w_{i_j}<w_{i_k}$ if and only if $v_j<v_k$.  So, for example, $[631524]\in S_6$ contains the pattern $[4231]\in S_4$, realized by the subsequence $6352$.  We say $w$ \textbf{avoids} $v$ if $w$ does not contain the pattern $v$.   For example, $[631524]$ avoids $[4321]$.  We write $S_n(v)$ for the set of all $w \in S_n$ avoiding $v$.  Marcus and Tardos proved in \cite{MT2004} that (as conjectured independently by Stanley and Wilf) for every fixed $v$, $|S_n(v)|$ grows exponentially with $n$.  It follows immediately that
\begin{equation} \label{sparse}
\lim_{n \rightarrow \infty} \frac{|S_n(v)|}{|S_n|}=0.
\end{equation}

In this type A setting, we take $B$ to be the Borel subgroup consisting of upper triangular matrices in $G$.  Write $e_1,e_2,\ldots,e_n$ for the standard basis of $\C^n$.  We set $F_k:=\C\{e_j : j \in [k]\}$, and observe that $B$ is the stabilizer in $G$ of the flag
$$
{\mathcal F}_\bullet:=( 0=F_0\subset F_1\subset \ldots \subset F_n=\C^n).
$$
As $G$ acts transitively on the set 
\[
\fln:= \{ \mathcal{V}_\bullet = (0=V_0 \subset V_1\subset \ldots \subset V_n=\C^n) : \dim_\C V_i = i \}
\] 
of all full flags in $\C^n$, we obtain a bijection $\B=G/B \to \fln$ defined by
\begin{eqnarray}\label{eqn.cosets-to-flags}
gB \mapsto g{\mathcal F}_\bullet:=(0=gF_0\subset gF_1\subset \ldots\subset gF_n=\C^n).
\end{eqnarray}

Let us assume (temporarily) that a Hessenberg space $H$ contains the Borel algebra $\b$ of upper triangular matrices in $\g$.  In this context, we define the \textbf{Hessenberg vector} $h=h(H):=(h_1,\ldots,h_n)$ by setting $h_j$ to be the largest integer $i>j$ such that the span $\C \{E_{ij}\}$ of the elementary matrix $E_{ij}$ is contained in $H$, if such $i$ exists, and $h_j=j$ otherwise.  We observe that the sequence $h$ is weakly increasing (as $H$ is $Ad(B)$-invariant) and satisfies $h_j \geq j$ for all $j$ (as $\fb\subseteq H$).  Moreover, the Hessenberg space $H$ is determined by, and uniquely determines, the Hessenberg vector $h(H)$. A direct computation shows that under the restriction of the bijection from~\eqref{eqn.cosets-to-flags} to $\B(x, H)$ we have 
\begin{equation} \label{flagmodel}
\B(x ,H ) \simeq \{{\mathcal V}_\bullet \in \fln : xV_i \subseteq V_{h_i} \mbox{ for all } i \in [n] \mbox{ where } h =h(H)\}.
\end{equation}
We will make use this identification below whenever it is convenient to do so.  

We will also consider the image of a Schubert variety $X_w=\overline{B\dot{w}B/B}$ under this identification.  To this end, 
for $w \in S_n$ and $p,q \in [n]$, we set
$$
r_{p,q}(w):=\left|\{i \in [p]:w_i \in [q]\}\right|.
$$
Then the correspondence between cosets and flags maps $X_w$ to the set of all ${\mathcal V}_\bullet$ satisfying
$$
\dim_\C (V_p \cap F_q) \geq r_{p,q}(w)
$$
for all $p,q \in [n]$, see \cite[\S 10.5]{Fulton}.

\subsection{Representations of reductive groups}\label{sec.reps} Returning to the setting of an arbitrary reductive algebraic group $G$, let $\psi:G \rightarrow GL(V)$ be a (rational, finite-dimensional) representation. Then the differential $d\psi:\g \rightarrow \mathfrak{gl}(V)$ is a Lie algebra homomorphism.  Both $\psi(T)$ and $d\psi(\h)$ are diagonalizable and thus there exist (finitely many) \textbf{weights} $\lambda \in \h^\ast$ such that
\begin{itemize}
\item $V=\bigoplus_\lambda V_\lambda$, where
\item $d\psi(h)(v)=\lambda(h) v$ for all $v \in V_\lambda$ and $h \in \h$, and
\item $\psi(t)$ acts as a scalar transformation on $V_\lambda$ for all $t \in T$ and all $\lambda$.
\end{itemize}
The subspaces $V_\lambda$ are called \textbf{weight spaces} of $V$.  We write $\Lambda$ for the set of all weights of all representations of $G$, which forms a lattice in $\h^\ast$.  The set of weights of the adjoint representation $Ad:G \rightarrow GL(\g)$ is $\Phi \sqcup \{0\}$, hence $\Phi \subseteq \Lambda$.  

If $\psi$ is irreducible, then there is a unique $1$-dimensional $\psi(B)$-invariant subspace of $V$, which is a weight space.  The associated weight $\lambda$ is called the \textbf{highest weight} of $\psi$, and we write $V(\lambda)$ for $V$.  Any nonzero $v_\lambda \in V(\lambda)_\lambda$ is a \textbf{highest weight vector} in $V(\lambda)$.

A weight $\lambda \in \Lambda$ is \textbf{dominant} if $\lambda$ is the highest weight for some irreducible representation.  The action of $W$ on $\h^\ast$ induces an action of $W$ on $\Lambda$.  We observe that $\dim_\C V_\lambda=\dim_\C V_{w(\lambda)}$ for every weight $\lambda$ for $V$ and every $w \in W$.  A dominant weight $\lambda$ is \textbf{strictly dominant} if the stabilizer of $\lambda$ in $W$ is trivial.  The partial order $\preceq$ on $\Phi$ defined above in~\eqref{eq-root-order} extends to a partial order on $\Lambda$.  We write $\rho \preceq \lambda$ if $\lambda-\rho$ is a nonnegative linear combination of positive roots.

We remark that every representation $\psi'$ of $\g$ is of the form $\psi'=d\psi$ for some representation $\psi$ of $G$, and that such a $\psi'$ is irreducible if and only if $\psi$ is irreducible.  Thus we may also use the terminology defined above when referring to representations of the Lie algebra $\g$.  Finally, a representation $\psi':\g\rightarrow \mathfrak{gl}(V)$ makes $V$ a module for the universal enveloping algebra $U(\g)$, and we make no distinction between such representations and modules.
Given dominant $\lambda\in\Lambda$, we refer to $V(\lambda)$ as a \textbf{highest weight module} for $G$, or equivalently $U(\fg)$.


\section{Highest weight Hessenberg varieties} \label{sec.highestwt}

In this section we study Questions~\ref{quest.E}, \ref{quest.I}, and \ref{quest.GI} for \textit{general} Hessenberg varieties of Definition~\ref{ghvdef}.  We prove Theorem~\ref{dem.intro} (see Theorem~\ref{dem} below), establishing that the answer to all three questions in this context is ``Yes.''  
Then we show that the answer to the equality problem, Question~\ref{quest.E}, for adjoint Hessenberg varieties is ``No'', by proving Theorem \ref{thm.not.adjoint.intro} (see Theorem~\ref{thm.not.adjoint} below).

Let $\psi: G \to GL(V(\lambda))$ be the irreducible representation of $G$ with highest weight $\lambda$, and fix a highest weight vector $v_\lambda\in V(\lambda)_\lambda$. Since $\C\{v_\lambda\}$ is a $\psi(B)$-invariant subspace of $V(\lambda)$, it follows that the Hessenberg variety $\B(v_\lambda, H)$ is invariant under left translation by $B$ and therefore a union of Schubert varieties.  We call the Hessenberg variety $\B(v_\lambda, H)$ a \textbf{highest weight Hessenberg variety}.  

Highest weight Hessenberg varieties defined using the adjoint representation have been studied by Tymoczko and by Abe--Crooks in\cite{Tymoczko2006A, Abe-Crooks2016}.  In the adjoint case, the highest weight is the highest root $\theta\in \Phi$.  
We fix nonzero $E_\theta \in \g_\theta$, so $E_\theta$ is a highest weight vector for the adjoint representation of $G$.
Abe and Crooks give an explicit description of of the highest weight Hessenberg variety $\B(E_\theta, H)$ as a union of Schubert varieties in the type A case whenever $\fb\subseteq H$.  The following result, due to Tymoczko (see~\cite[Prop.~4.5]{Tymoczko2006}), describes a collection of Schubert varieties equal to highest weight adjoint Hessenberg varieties. 

\begin{proposition}[Tymoczko]\label{prop.highest-wt-adjoint} Suppose $\gamma\in \Phi$ is a root in the same $W$-orbit as $\theta$ and let $w$ be the maximal length element of $W$ such that $w^{-1}(\theta)=\gamma$.  Let $H_\gamma$ be the $B$-submodule of $\fg$ generated by $E_{w^{-1}(\theta)} = E_\gamma$.  Then $X_w = \B(E_\theta, H_\gamma)$.
\end{proposition}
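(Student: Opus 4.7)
The plan is to identify both $X_w$ and $\B(E_\theta, H_\gamma)$ as the same union of Schubert cells.

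First, I would determine exactly which root spaces lie in $H_\gamma$. Since $H_\gamma = U(\fb) \cdot E_\gamma$ is a sum of $\h$-weight spaces, each intersection $H_\gamma \cap \fg_\beta$ is either $0$ or $\fg_\beta$. Weight considerations on $U(\fb)$ force $\fg_\beta \subseteq H_\gamma$ only when $\beta \succeq \gamma$. Conversely, for $\beta \succeq \gamma$ in $\Phi$, the standard chain-of-roots fact supplies simple roots $\alpha_{i_1},\dots,\alpha_{i_k}$ with $\beta = \gamma + \alpha_{i_1}+\cdots+\alpha_{i_k}$ and every partial sum in $\Phi$; the iterated bracket $\ad(E_{\alpha_{i_k}})\cdots\ad(E_{\alpha_{i_1}})(E_\gamma)$ is then a nonzero element of $\fg_\beta$ inside $H_\gamma$. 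Thus $\fg_\beta \subseteq H_\gamma$ iff $\beta \succeq \gamma$. (Note that for $\gamma \in \Phi^-$ the space $H_\gamma$ may also contain pieces of $\h$, but only the root-space content is relevant for what follows.)

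Second, since $\C\{E_\theta\}$ is $B$-stable and $H_\gamma$ is $B$-invariant, the variety $\B(E_\theta, H_\gamma)$ is left-$B$-invariant and closed, hence a union of Schubert cells. A cell $C_v$ sits inside $\B(E_\theta, H_\gamma)$ iff $\Ad(\dot{v}^{-1})(E_\theta) \in H_\gamma$; and since $\Ad(\dot{v}^{-1})(E_\theta)$ spans $\fg_{v^{-1}(\theta)}$, the first step gives
$$
\B(E_\theta, H_\gamma) = \bigsqcup_{v \in Y_\gamma} C_v, \qquad Y_\gamma := \{v \in W : v^{-1}(\theta) \succeq \gamma\}.
$$
As $X_w = \bigsqcup_{v \leq_\br w} C_v$, the problem reduces to showing $Y_\gamma = \{v \in W : v \leq_\br w\}$.

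The inclusion $\{v : v \leq_\br w\} \subseteq Y_\gamma$ uses the standard criterion that $v \leq_\br w$ implies $v^{-1}(\mu) \succeq w^{-1}(\mu)$ for every dominant weight $\mu$; applied with $\mu = \theta$ (dominant, as the highest weight of the adjoint representation), this gives $v^{-1}(\theta) \succeq w^{-1}(\theta) = \gamma$. For the reverse inclusion $Y_\gamma \subseteq \{v : v \leq_\br w\}$, I would partition $Y_\gamma$ according to $\gamma' := v^{-1}(\theta) \in W\theta$. Each fiber $\{v : v^{-1}(\theta) = \gamma'\}$ is a left coset $W_\theta \cdot v_{\gamma'}$, where $v_{\gamma'}$ is the unique minimal-length representative in the parabolic quotient $W^\theta$ ($W_\theta$ being the standard parabolic generated by the simple reflections $s_\alpha$ with $\langle \theta, \alpha^\vee \rangle = 0$), and its Bruhat-maximum is $w_{\gamma'} := w_{0,\theta} v_{\gamma'}$; in particular $w = w_\gamma$. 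It therefore suffices to show that $\gamma' \succeq \gamma$ in $W\theta$ implies $w_{\gamma'} \leq_\br w_\gamma$.

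The main obstacle is this final Bruhat comparability. Via the compatibility of Bruhat order with parabolic factorization (prepending a fixed reduced word for $w_{0,\theta}$ preserves subword relations among elements of $W^\theta$ since length is additive), the claim reduces to $v_{\gamma'} \leq_\br v_\gamma$ in $W^\theta$. The latter is the classical statement that the bijection $W^\theta \to W\theta$, $v \mapsto v^{-1}(\theta)$, is an order-reversing isomorphism from the Bruhat order on $W^\theta$ to the root order on $W\theta$. I would prove the nontrivial direction of this correspondence by induction on length in $W^\theta$, using the identity $(s_iv)^{-1}(\theta) = v^{-1}(\theta) - \langle\theta,\alpha_i^\vee\rangle v^{-1}(\alpha_i)$ together with the characterization of Bruhat covers of minimal coset representatives.
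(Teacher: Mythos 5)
Your overall strategy---express both sides as unions of cells indexed by a condition on $v^{-1}(\theta)$, then reduce to a combinatorial statement about Bruhat order---is a natural plan, but it breaks at a step you label ``standard.''

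The core error is in your first step. You claim $\fg_\beta \subseteq H_\gamma$ if and only if $\beta \succeq \gamma$, and justify the nontrivial direction by a chain-of-roots argument. This is false, and the paper's Example~\ref{ex-adjoint-demazure} gives a counterexample in $\mathfrak{sl}_4(\C)$: with $\gamma = \epsilon_2-\epsilon_1$ and $\beta = \epsilon_3-\epsilon_4$ one has $\gamma \preceq \beta$ (since $\beta - \gamma = \alpha_1 + \alpha_3$), yet $\fg_\beta \not\subseteq H_\gamma$. Indeed no chain exists: $\gamma + \alpha_i$ is not a root for any simple $\alpha_i$ (for instance $-\alpha_1 + \alpha_3$ is not a root in $A_3$). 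The order that actually governs containment of root spaces in $H_\gamma$ is the strictly coarser Demazure order $\leq$ of Section~\ref{sec.demazure}, characterized by Proctor's chain condition (Lemma~\ref{lem.partial.order}); the paper explicitly points out that $\gamma \leq \beta$ implies $\gamma \preceq \beta$ but \emph{not} conversely.

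The same conflation of $\preceq$ with $\leq$ recurs in your final step, where you assert that $v \mapsto v^{-1}(\theta)$ is order-reversing from Bruhat order on minimal coset representatives to the root order $\preceq$ on $W\theta$. This also fails. In $A_3$ take $\gamma = \epsilon_2-\epsilon_1$ and $\gamma' = \epsilon_3-\epsilon_4$, so $\gamma' \succ \gamma$. The minimal-length $v$ with $v^{-1}(\theta)=\gamma'$ is $[2314]$, and the minimal-length $v$ with $v^{-1}(\theta)=\gamma$ is $[4123]$; but $[2314] \not\leq_\br [4123]$, since the tableau criterion already fails at $I_{1,2}$ ($2 > 1$). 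The correct order-reversing correspondence is with the Demazure order $\leq$, not with $\preceq$. Your two errors do not cancel: the index set $Y_\gamma$ you produce is strictly larger than the set of cells of $\B(E_\theta,H_\gamma)$ (it wrongly includes $C_{[2314]}$ and $C_{[3214]}$ in the example above) and strictly larger than the cells of $X_w$.

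If you replace $\succeq$ with the Demazure order $\leq$ throughout, the outline becomes correct in spirit, but then the two ingredients you need---$\fg_\beta \subseteq H_\gamma$ iff $\beta \geq \gamma$, and $\beta \geq \gamma$ iff the corresponding minimal coset representatives compare in Bruhat order---are precisely Proctor's lemma and the Bern\v{s}te\u{\i}n--Gel{'}fand--Gel{'}fand theorem (Theorem~\ref{bggthm} in the paper). The paper's route to the generalization Theorem~\ref{thm.general-highest-wt} is shorter: BGG says $\tau <_\br w$ in $W^J$ if and only if $H_{\tau(\lambda)} \subset H_{w(\lambda)}$, which identifies the cell index set with a Bruhat interval in one stroke, bypassing any explicit comparison of orders on the weight orbit.
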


The $B$-module $H_\gamma$ defined in Proposition \ref{prop.highest-wt-adjoint} is known as a Demazure module.  Such modules are defined similarly for arbitrary irreducible representations of $G$, and will be used below to prove Theorem~\ref{dem.intro}.

\subsection{Demazure Modules} \label{sec.demazure}

Throughout this section, we let $\lambda$ denote a fixed dominant weight and $V(\lambda)$ the associated highest weight module for $G$, or equivalently, for $U(\fg)$. For each $w \in W$, fix a nonzero vector $v_{w(\lambda)}$ in the (one-dimensional) weight space $V(\lambda)_{w(\lambda)}$.  The \textbf{Demazure module} $H_{w(\lambda)}$ is the $U(\b)$-submodule of $V(\lambda)$ generated by $v_{w(\lambda)}$.  As remarked in \cite[Definition 8.1.22]{Kumar}, $H_{w(\lambda)}$ is $B$-invariant and so is a Hessenberg space in $V(\lambda)$.
 Indeed, $H_{w(\lambda)}$ is the $B$-submodule of $V(\lambda)$ generated by $v_{w(\lambda)}$.

\begin{example}\label{ex-adjoint-demazure}
We record here some observations regarding these constructions in the adjoint case that will be useful in Sections~\ref{sec.pattern-adjoint} and \ref{sec.typeC}.
Given a root $\gamma\in \Phi$ in the same $W$-orbit as $\theta$, the Demazure module $H_\gamma\subseteq \fg$ is the $B$-submodule generated by a root vector $E_\gamma\in \fg_\gamma$.
Since each $H_\gamma$ is $B$-invariant, each is also $T$-invariant and it follows that $\fg_\beta \cap H_{\gamma}\neq \varnothing$ implies $\fg_\beta\subset H_\gamma$.

Motivated by this property, we define a second partial order on $\Phi$ by 
\begin{eqnarray}\label{eqn.Demazure-order}
\gamma \leq \beta \ \textup{ whenever } \  \fg_\beta \subseteq H_\gamma. 
\end{eqnarray}
We note that $\gamma \leq \beta$ implies $\gamma \preceq \beta$, where $\preceq$ is defined in \eqref{eq-root-order}, but not vice versa.
For example, say $\g=\mathfrak{sl}_4(\C)$ so $\Phi$ is of type $A_3$.  Set
\[
H_{21}:= \mathfrak{sl}_4(\C)\cap \C\{ E_{ij} : i\in \{1,2\}   \},
\]
Then $H_{\epsilon_2-\epsilon_1} \subseteq H_{21}$, since conjugation by any upper triangular matrix $b\in B$ maps the root vector $E_{21}\in \fg_{\epsilon_2-\epsilon_1}$ into $H_{21}$.  Since $E_{34}\notin H_{21}$, this shows $\epsilon_2-\epsilon_1 \nleq \epsilon_3-\epsilon_4$.  On the other hand, $\epsilon_2-\epsilon_1 \preceq \epsilon_3-\epsilon_4$,
 since $\epsilon_2-\epsilon_1\in \Phi^-$ and $\epsilon_3-\epsilon_4\in\Phi^+$.
\end{example}

The partial order $\leq$ on $\Phi$ defined in~\eqref{eqn.Demazure-order} is a special case of the partial order on $\{w(\lambda): w\in W \}$ for $\lambda\in \Lambda$ a dominant weight defined and studied by Proctor in~\cite{Proctor1982}.  The following lemma summarizes \cite[Proposition 3]{Proctor1982} in the adjoint case.

\begin{lemma}[Proctor]\label{lem.partial.order} Given $\gamma, \gamma'\in \Phi$ we have $\gamma \leq \gamma'$ if and only if there exist positive roots $\gamma_{i_1}, \ldots, \gamma_{i_k}\in \Phi^+$ and positive integers $n_1, n_2, \ldots, n_k$ such that $\gamma'= \gamma+n_1\gamma_{i_1}+n_2\gamma_{i_2}+\cdots + n_k \gamma_{i_k}$ and $\gamma+n_1\gamma{i_1}+\cdots + n_m \gamma_{i_m}\in \Phi$ for all $1\leq m \leq k$.  
\end{lemma}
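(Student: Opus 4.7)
My plan is to prove the two directions of the equivalence separately. The backward direction $(\Leftarrow)$ admits a direct, constructive proof using $\mathfrak{sl}_2$-string arguments, while the forward direction $(\Rightarrow)$ invokes the substantive content of Proctor's Proposition 3 in \cite{Proctor1982}, which is identified in the preceding text as the source of this lemma.

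For the $(\Leftarrow)$ direction, given the sequence $\mu_0 = \gamma$, $\mu_j = \mu_{j-1} + n_j \gamma_{i_j}$ for $1 \leq j \leq k$, with $\mu_k = \gamma'$ and each $\mu_j \in \Phi$, I would show by induction on $j$ that $E_{\mu_j} \in H_\gamma$. For each step, the $\mathfrak{sl}_2$-triple $\{E_{\gamma_{i_j}}, E_{-\gamma_{i_j}}, H_{\gamma_{i_j}}\}$ acts on $\fg$ via the adjoint representation, decomposing $\fg$ into finite-dimensional irreducibles whose weights form unbroken $\gamma_{i_j}$-strings. Because $\mu_{j-1}$ and $\mu_j$ are both roots, the standard theory of root strings forces them to lie in a single such irreducible $\mathfrak{sl}_2$-submodule (the only alternative being the adjoint $\mathfrak{sl}_2$ triple itself, when $\mu_{j-1} = -\gamma_{i_j}$ and $\mu_j = \gamma_{i_j}$, which is handled identically). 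Basic $\mathfrak{sl}_2$ representation theory then yields that $\ad(E_{\gamma_{i_j}})^{n_j}(E_{\mu_{j-1}})$ is a nonzero scalar multiple of $E_{\mu_j}$, placing $E_{\mu_j}$ in $U(\fb) \cdot E_{\mu_{j-1}} \subseteq H_\gamma$ by the inductive hypothesis. Chaining through $j = k$ gives $E_{\gamma'} \in H_\gamma$, i.e., $\gamma \leq \gamma'$.

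For the $(\Rightarrow)$ direction, I would verify that the partial order on $\Phi$ defined in~\eqref{eqn.Demazure-order} matches the restriction of Proctor's partial order on $W \cdot \lambda$ (with $\lambda$ ranging over the highest roots of the $W$-orbits in $\Phi$, which covers all of $\Phi$ once simply-laced and non-simply-laced cases are accounted for) and then apply \cite[Prop.~3]{Proctor1982}. The main obstacle lies in this direction: given an arbitrary PBW expression for $E_{\gamma'}$ as an iterated adjoint action on $E_\gamma$, extracting a sequence whose partial sums stay in $\Phi$ requires a structural argument rather than a direct computation, since intermediate vectors may pass through the zero weight space $\fh$ (and each such detour must be absorbed into the positive-integer multiplicity $n_j \geq 2$ of some step). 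Proctor's proposition supplies exactly this reorganization through a careful inductive analysis of the partial order on weights.
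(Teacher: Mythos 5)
The paper offers no proof of this lemma at all: it simply asserts that the statement ``summarizes \cite[Proposition 3]{Proctor1982} in the adjoint case'' and moves on. Your proposal is therefore a genuine departure in that it supplies a self-contained argument for one of the two directions. Your backward $(\Leftarrow)$ argument is correct: given consecutive roots $\mu_{j-1}$ and $\mu_j=\mu_{j-1}+n_j\gamma_{i_j}$, the $\gamma_{i_j}$-root string through $\mu_{j-1}$ spans an irreducible $\mathfrak{sl}_2$-submodule of $\g$ with one-dimensional weight spaces (the adjoint module of the triple itself being the degenerate case $\mu_{j-1}=-\gamma_{i_j}$, $n_j=2$), so the raising operator applied $n_j$ times to $E_{\mu_{j-1}}$ lands nonzero in $\g_{\mu_j}$, and each step stays inside $H_\gamma=U(\b)\cdot E_\gamma$. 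For the forward direction you fall back on citing Proctor, which matches the paper's treatment.

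Two remarks worth making. First, your framing treats the two directions asymmetrically, but Proctor's Proposition 3 already gives an equivalence; citing it covers $(\Leftarrow)$ as well, so the $\mathfrak{sl}_2$-string argument is logically redundant in the hybrid proof, though it is a useful clarification for the reader. Second, your forward direction has a soft spot: you say you would ``verify that the partial order on $\Phi$ defined in~\eqref{eqn.Demazure-order} matches the restriction of Proctor's partial order on $W\cdot\lambda$'' before applying his result. If Proctor's order is defined precisely by the chain condition, that verification \emph{is} the lemma, so nothing has been reduced; if instead Proctor's order is the one induced by the Bruhat order on minimal coset representatives, then the bridge between that and the Demazure-module order in~\eqref{eqn.Demazure-order} is exactly the content of the Bern\v{s}te\u{\i}n--Gel'fand--Gel'fand theorem (Theorem~\ref{bggthm}), which the paper only states afterward. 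Either way you should spell out which intermediate characterization is being invoked, so the chain of equivalences (Demazure inclusion $\Leftrightarrow$ Bruhat order on $W^J$ $\Leftrightarrow$ chain condition) is made explicit rather than collapsed into a single citation.
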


\subsection{Proof of Theorem~\ref{dem.intro}} \label{sec-proof-3.4}
Let $J$ denote the set of simple reflections stabilizing $\lambda$, i.e., $J=\{s_\alpha: s_\alpha(\lambda)=\lambda\}$.  The subgroup $W_J$ generated by $J$ is the stabilizer of $\lambda$ in $W$.  Recall that the set of left cosets of $W_J$ in $W$ can be identified with the set of shortest left coset representatives, denoted herein by $W^J$ (see \cite[Section 2.4]{Bjorner-Brenti}).  The Bruhat order on $W$ induces an order on $W^J$, and we have $\tau \leq_\br w$ implies $\tau^J\leq_\br w^J$ where $\tau^J$ and $w^J$ denote the shortest coset representatives for $\tau W_J$ and $w W_J$, respectively (see \cite[Proposition 2.5.1]{Bjorner-Brenti}).    Note that there is a bijection between $W^J$ and the set $\{w(\lambda) : w\in W\}$ given by $\tau \mapsto \tau(\lambda)$ and thus $H_{w(\lambda)} = H_{w^J(\lambda)}$.

The next result is essentially Theorem 2.9 of the paper \cite{BGG} of Bernstein, Gel'fand and Gel'fand.  In \cite{BGG} it is assumed that $\g$ is the Lie algebra of a simply connected semisimple group, that $H_{w(\lambda)}$ is the module for the nilpotent radical of $\b$ generated by $v_{w(\lambda)}$, and that $\lambda$ is strictly dominant.  The proof of \cite[Theorem 2.9]{BGG} given therein remains valid under the weaker assumptions stated in Theorem \ref{bggthm} below.

\begin{theorem}[Bern\v{s}te\u{\i}n-Gel{'}fand-Gel{'}fand] \label{bggthm}
Suppose $\lambda$ is a dominant weight for the reductive Lie algebra $\g$, with associated irreducible representation $\psi':\g \rightarrow \mathfrak{gl}(V(\lambda))$ and highest weight vector $v_\lambda$.  Let $\tau,w$ be distinct elements of $W^J$.  Then $\tau  <_\br w$ if and only if $H_{\tau(\lambda)} \subset H_{w(\lambda)}$.
\end{theorem}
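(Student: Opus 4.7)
The plan is to establish the biconditional by treating each direction separately, using the weight structure of the highest weight module $V(\lambda)$ and a reduction to covering relations in the Bruhat order on $W$ (restricted to its induced order on $W^J$).

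For the forward direction $\tau <_\br w \Longrightarrow H_{\tau(\lambda)} \subsetneq H_{w(\lambda)}$, I would induct on $\ell(w)-\ell(\tau)$ along a chain of Bruhat covers $\tau = u_0 \lessdot u_1 \lessdot \cdots \lessdot u_k = w$ in $W$, where each step has $u_{i+1} = s_{\beta_i} u_i$ for some positive root $\beta_i$ and $\ell(u_{i+1}) = \ell(u_i)+1$. At each step, either $s_{\beta_i}$ fixes $u_i(\lambda)$, in which case $H_{u_i(\lambda)} = H_{u_{i+1}(\lambda)}$, or $\langle u_i(\lambda), \beta_i^\vee \rangle = n_i > 0$ and $u_{i+1}(\lambda) = u_i(\lambda) - n_i \beta_i$. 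In the latter case, integrability of $V(\lambda)$ as an $\mathfrak{sl}_2$-module for the root $\beta_i$, together with the one-dimensionality of each weight space $V(\lambda)_{u(\lambda)}$ for $u \in W$, forces $E_{\beta_i}^{n_i} v_{u_{i+1}(\lambda)}$ to be a nonzero scalar multiple of $v_{u_i(\lambda)}$; since $E_{\beta_i} \in \fb$, this places $v_{u_i(\lambda)} \in H_{u_{i+1}(\lambda)}$, whence $H_{u_i(\lambda)} \subseteq H_{u_{i+1}(\lambda)}$. Chaining yields $H_{\tau(\lambda)} \subseteq H_{w(\lambda)}$, and strictness follows because $\tau \ne w$ in $W^J$ guarantees $\tau(\lambda) \ne w(\lambda)$, hence $w(\lambda) \prec \tau(\lambda)$ strictly in the root-lattice order; this in turn forces $v_{w(\lambda)} \notin H_{\tau(\lambda)}$, because every weight occurring in $H_{\tau(\lambda)} = U(\fb) v_{\tau(\lambda)}$ is $\succeq \tau(\lambda)$.

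For the backward direction, assume $H_{\tau(\lambda)} \subset H_{w(\lambda)}$, so $v_{\tau(\lambda)} \in U(\fb) v_{w(\lambda)}$. Since $U(\fb) = U(\h) U(\mathfrak{n}^+)$ acts on weight vectors by shifting weights only by nonnegative integer combinations of positive roots, we obtain $\tau(\lambda) - w(\lambda) \in \sum_{\alpha \in \Phi^+} \Z_{\geq 0}\, \alpha$, i.e., $w(\lambda) \preceq \tau(\lambda)$. Invoking the standard correspondence between the dominance order on $W\cdot\lambda$ and the induced Bruhat order on the minimal coset representatives $W^J$ (a generalization of Lemma~\ref{lem.partial.order} from the adjoint case to an arbitrary dominant weight $\lambda$), this inequality yields $\tau \leq_\br w$, and $\tau \ne w$ upgrades this to $\tau <_\br w$.

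The main obstacle is verifying the weight-to-Bruhat correspondence at the level of generality required here: for non-strictly-dominant $\lambda$ the map $W \to W \cdot \lambda$ has kernel $W_J$, so one must check that restriction to $W^J$ yields a bijection with $W\cdot\lambda$ that is order-preserving in both directions, and that the $\mathfrak{sl}_2^{\beta}$-string argument from the forward direction admits a converse producing genuine Bruhat covers in $W^J$ out of weight-level subtractions by positive roots. Once this combinatorial correspondence is secured, the remaining work reduces to the $\mathfrak{sl}_2$-integrability considerations already outlined, and the proof structure of \cite[Theorem 2.9]{BGG} goes through essentially unchanged.
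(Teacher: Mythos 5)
Your forward direction is sound: the chain-of-covers argument combined with the $\mathfrak{sl}_2$-string analysis for extremal weight vectors (and the observation that $H_{u(\lambda)}$ depends only on the weight $u(\lambda)$, so the chain may wander outside $W^J$) is a correct and reasonably standard way to establish $\tau <_\br w \Rightarrow H_{\tau(\lambda)} \subsetneq H_{w(\lambda)}$. Note that the paper does not reprove this theorem but cites the original geometric proof of \cite[Theorem 2.9]{BGG}, so your representation-theoretic route is genuinely different in style.

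The backward direction, however, has a real gap. You derive $w(\lambda) \preceq \tau(\lambda)$ from the module inclusion and then invoke a ``standard correspondence'' identifying the dominance order $\preceq$ on $W\lambda$ with (the opposite of) the Bruhat order on $W^J$. No such correspondence exists, and the paper's own Example~\ref{ex-adjoint-demazure} is precisely a counterexample. Concretely, take $\g = \mathfrak{sl}_4(\C)$, $\lambda = \theta = \epsilon_1 - \epsilon_4$, and $\tau = [2341]$, $w = [3124]$, both minimal coset representatives modulo $W_\theta = \langle s_2 \rangle$. Then $\tau(\theta) = \epsilon_2 - \epsilon_1$ and $w(\theta) = \epsilon_3 - \epsilon_4$, so $\tau(\theta) \prec w(\theta)$ in the dominance order (their difference is $-\alpha_1 - \alpha_3$); yet $w \not\leq_\br \tau$, since the tableau criterion already fails at $(j,k)=(1,1)$ because $w_1 = 3 > 2 = \tau_1$, and likewise $\tau \not\leq_\br w$ since $\ell(\tau) > \ell(w)$. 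So $\tau$ and $w$ are Bruhat-incomparable even though their extremal weights are dominance-comparable (and, consistently, $H_{\tau(\theta)}$ and $H_{w(\theta)}$ are incomparable). The correspondence you want is not with $\preceq$ but with the strictly finer order $\leq$ from Lemma~\ref{lem.partial.order}, which records that each intermediate step stays on the $W$-orbit of $\lambda$ rather than merely in $\lambda$ plus the positive root cone; extracting that chain structure from the bare inclusion $v_{\tau(\lambda)} \in U(\fb)v_{w(\lambda)}$ is exactly the content that is missing, and it is where the real work in \cite[Theorem 2.9]{BGG} lies. As written, your reduction discards the information needed to close the argument.
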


With this terminology in place, we can now prove the following result, generalizing Proposition~\ref{prop.highest-wt-adjoint} above.

\begin{theorem}\label{thm.general-highest-wt} Suppose $\lambda$ is a dominant weight for $G$, with associated highest weight representation $\psi: G \to GL(V(\lambda))$.  Let $\mu$ be a weight of $V(\lambda)$ such that $\mu$ and $\lambda$ are in the same $W$-orbit and let $w^{}\in W$ be the longest element satisfying $w (\lambda)=\mu$.  Then $X_{w^{-1}} = \B(v_\lambda, H_{w^J(\lambda)})$.
\end{theorem}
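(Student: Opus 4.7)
My plan is to show both $X_{w^{-1}}$ and $\B(v_\lambda, H_{w^J(\lambda)})$ are closed unions of exactly the same collection of Schubert cells. Both are closed in $\B$, so the task reduces to identifying which $C_\tau$ each contains, and then proving that the two resulting conditions on $\tau$ are combinatorially equivalent.

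For the Hessenberg side, I would first note that $\B(v_\lambda, H_{w^J(\lambda)})$ is left $B$-invariant: replacing $g$ by $bg$ rescales $\psi(g^{-1})v_\lambda$ by the character through which $B$ acts on the highest weight line, so membership in $H_{w^J(\lambda)}$ is unaffected. Hence this variety is a union of Schubert cells. Next, $\psi(\dot\tau^{-1})v_\lambda$ lies in the one-dimensional weight space $V(\lambda)_{\tau^{-1}(\lambda)}$ and is nonzero, so it is a nonzero scalar multiple of $v_{\tau^{-1}(\lambda)}$. Therefore $C_\tau \subseteq \B(v_\lambda, H_{w^J(\lambda)})$ if and only if $v_{\tau^{-1}(\lambda)} \in H_{w^J(\lambda)}$, and because $H_{w^J(\lambda)}$ is $B$-invariant while $H_{\tau^{-1}(\lambda)}$ is by definition the $B$-module generated by $v_{\tau^{-1}(\lambda)}$, this is equivalent to $H_{(\tau^{-1})^J(\lambda)} \subseteq H_{w^J(\lambda)}$. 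By Theorem~\ref{bggthm}, this in turn translates to $(\tau^{-1})^J \leq_\br w^J$ in $W^J$.

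The combinatorial core, which I expect to be the main obstacle, is then the equivalence $(\tau^{-1})^J \leq_\br w^J$ if and only if $\tau \leq_\br w^{-1}$. Using that inversion is an antiautomorphism of $(W,\leq_\br)$, this becomes $(\tau^{-1})^J \leq_\br w^J$ iff $\tau^{-1} \leq_\br w$. By hypothesis $w$ is the longest element of $\{v \in W : v(\lambda)=\mu\} = w^J W_J$, so $w = w^J \cdot w_J$ with $\ell(w) = \ell(w^J) + \ell(w_J)$, where $w_J$ is the longest element of $W_J$. One direction is the standard parabolic projection result $v \leq_\br w \Rightarrow v^J \leq_\br w^J$ (Proposition 2.5.1 of Bj\"orner--Brenti). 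For the converse, I would factor $\tau^{-1} = (\tau^{-1})^J \cdot u$ with $u \in W_J$ and additive lengths; since $w_J$ is the longest element of $W_J$, $u \leq_\br w_J$ automatically, and concatenating a subword expression witnessing $(\tau^{-1})^J \leq_\br w^J$ with one for $u \leq_\br w_J$ inside $W_J$ exhibits $\tau^{-1} \leq_\br w$ via the subword criterion. The care required here is in correctly tracking shortest-representative conventions under inversion; once that is resolved, the cells contained in the two varieties coincide and the theorem follows.
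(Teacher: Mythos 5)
Your argument is correct and follows essentially the same route as the paper: identify both sides as $B$-invariant unions of Schubert cells, translate cell membership through the one-dimensional weight spaces to containment of Demazure modules, invoke the Bern\v{s}te\u{\i}n--Gel'fand--Gel'fand theorem, and reduce to the combinatorial equivalence $\tau^J \leq_\br w^J \Leftrightarrow \tau \leq_\br w$ when $w$ is the longest element of its coset $w W_J$. The paper states this last equivalence without proof, whereas you supply the subword argument using additivity of lengths and the fact that $w_J$ dominates $W_J$; that detail is a genuine (if small) improvement. One terminological slip: inversion is an \emph{automorphism} of the Bruhat poset (it preserves the order, $u \leq_\br v \Leftrightarrow u^{-1} \leq_\br v^{-1}$), not an antiautomorphism --- the antiautomorphism is $w \mapsto ww_0$ --- but the fact you actually use is the correct one, so nothing in the argument breaks.
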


\begin{proof} Our assumptions on $w$ imply $w = w^Jy_0$ where $w^J\in W^J$ and $y_0$ is the longest element of $W_J$.  We have $\tau \leq_\br w$ if and only if $\tau^J \leq_\br w^J$ in this case.

As $\C \{v_\lambda\}$ is $\psi(B)$-invariant, so is the general highest weight Hessenberg variety $\B(v_\lambda,H_{w^{J}(\lambda)})$.  
Therefore $\B(v_\lambda,H_{w^{J}(\lambda)})$ is a union of Schubert cells.  We see now that
\begin{equation} \label{whichsigma}
\B(v_\lambda,H_{w^{J}(\lambda)})=\bigcup_{\substack{\tau\in W\\ \psi(\dot{\tau})(v_\lambda) \in H_{w^J(\lambda)}} } C_{\tau^{-1}}.
\end{equation}
Note that $\psi(\dot{\tau})(v_\lambda) \in H_{w^{J}(\lambda)} $ if and only if $H_{\tau^{J}(\lambda)} \subseteq H_{w^{J}(\lambda)}$.  
Combining \eqref{whichsigma} and the fact that $\tau \leq_\br w$ if and only if $\tau^{-1} \leq_\br w^{-1}$ with Theorem \ref{bggthm} (applied to $\psi'=d\psi$), we see that 
$$
\B(v_\lambda,H_{w^{J}(\lambda)})=X_{w^{-1}},
$$
as desired.
\end{proof}

We are ready to prove Theorem \ref{dem.intro}, restated here for convenience.

\begin{theorem}\label{dem}
Let $\lambda$ be a strictly dominant weight for $G$, with associated highest weight representation $\psi:G \rightarrow GL(V(\lambda))$ and highest weight vector $v_\lambda$.   Then $X_w=\B(v_\lambda,H_{w^{-1}(\lambda)})$.
\end{theorem}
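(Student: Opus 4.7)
The plan is to deduce Theorem~\ref{dem} as a direct specialization of Theorem~\ref{thm.general-highest-wt}, exploiting the strict dominance hypothesis to collapse the parabolic bookkeeping that appears in the more general statement.

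First, I would unpack the strict dominance hypothesis. By definition, $\lambda$ being strictly dominant means that its stabilizer in $W$ is trivial, so the set $J$ of simple reflections fixing $\lambda$ is empty and $W_J = \{e\}$. Consequently $W^J = W$, and for every $\tau \in W$ the shortest coset representative satisfies $\tau^J = \tau$. In particular, the map $\tau \mapsto \tau(\lambda)$ is a bijection between $W$ and the $W$-orbit of $\lambda$, so the weights of $V(\lambda)$ lying in the $W$-orbit of $\lambda$ are indexed by $W$ itself, with no ambiguity.

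Next, given $w \in W$ I would set $\mu := w^{-1}(\lambda)$, which is a weight of $V(\lambda)$ in the $W$-orbit of $\lambda$. Because the stabilizer $W_J$ is trivial, $w^{-1}$ is the unique element of $W$ satisfying $w^{-1}(\lambda) = \mu$, and hence trivially the longest such element. Applying Theorem~\ref{thm.general-highest-wt} with $w^{-1}$ playing the role of the ``$w$'' in that theorem, and noting that $(w^{-1})^J = w^{-1}$, yields
\[
X_{(w^{-1})^{-1}} \;=\; \B\bigl(v_\lambda,\, H_{(w^{-1})^J(\lambda)}\bigr),
\]
which simplifies to $X_w = \B(v_\lambda, H_{w^{-1}(\lambda)})$, exactly the claim.

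There is no real obstacle here beyond correctly matching notation; the substantive content has already been absorbed into Theorem~\ref{thm.general-highest-wt} (and ultimately into the Bern\v{s}te\u{\i}n--Gel'fand--Gel'fand result, Theorem~\ref{bggthm}). If I wanted to present the argument self-contained, the only step requiring care would be verifying that the general theorem's hypothesis ``$w$ is the longest element with $w(\lambda) = \mu$'' reduces under strict dominance to ``$w$ is the unique element with $w(\lambda) = \mu$'', but this is immediate from $W_J = \{e\}$. Thus the entire proof is a one-line corollary once the parabolic reduction is observed.
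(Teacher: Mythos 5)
Your argument is correct and is essentially the paper's own proof: both deduce Theorem~\ref{dem} directly from Theorem~\ref{thm.general-highest-wt} by observing that strict dominance forces $W_J=\{e\}$, hence $W^J=W$ and $w^J=w$, so the distinct weights $w(\lambda)$ are in bijection with $W$. The only difference is that you spell out the substitution $w \mapsto w^{-1}$ and the role of $\mu=w^{-1}(\lambda)$ a bit more explicitly than the paper does.
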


\begin{proof}Suppose $\lambda$ is a strictly dominant weight of $G$.  Then $J=\{e\}$ and $W^J=W$.  Theorem~\ref{dem} now follows immediately from Theorem~\ref{thm.general-highest-wt} since the weights $w(\lambda)$ for $w\in W$ are pairwise distinct.
\end{proof}


\subsection{Schubert varieties that are not adjoint Hessenberg varieties} 

We prove herein that for ``most" reductive $G$ there are Schubert varieties in $\B$ that are not Hessenberg varieties.  The following is a restatement of Theorem~\ref{thm.not.adjoint.intro} above.

\begin{theorem} \label{thm.not.adjoint}
If some nonabelian simple ideal of $\g$ is isomorphic with neither $\mathfrak{sl}_2(\C)$ nor $\mathfrak{sl}_3(\C)$, then there is some $w \in W$ such that no adjoint Hessenberg variety in $\B$ is equal to $X_w$.  In particular, assume that the root system $\Phi$ for $G$ is irreducible and not of type $A_1$ or $A_2$.  Let $\theta$ be the highest root in $\Phi$ and let $W_\theta$ be the stabilizer of $\theta$ in $W$.  Let $w_0$ be the longest element of $W$.  If $y$ is a nonidentity element of $W_\theta$, then no adjoint Hessenberg variety in $\B$ is equal to $X_{yw_0}$.
\end{theorem}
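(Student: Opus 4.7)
Suppose for contradiction that $X_{yw_0} = \B(x, H)$ for some $x \in \fg$ and Hessenberg space $H \subseteq \fg$. Since $X_{yw_0}$ is left $B$-invariant, so is $\B(x,H)$; unwinding the definition $C_v = B\dot vB/B$ together with the $\Ad(B)$-invariance of $H$ yields the criterion
\begin{equation}\label{pp-main}
v \leq_\br yw_0 \iff \Ad(\dot v^{-1})M_x \subseteq H \qquad (v \in W),
\end{equation}
where $M_x := \mathrm{span}_{\mathbb C}(\Ad(B)\cdot x) \subseteq \fg$ is the $B$-submodule of $\fg$ generated by $x$.

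The identity that drives the contradiction is $(yv)^{-1}\theta = v^{-1}(y^{-1}\theta) = v^{-1}\theta$ for every $v \in W$, valid because $y^{-1} \in W_\theta$. First consider the special case $x = cE_\theta$ with $c \neq 0$, for which $M_x = \fg_\theta$. Then \eqref{pp-main} specializes to $v \leq_\br yw_0 \iff v^{-1}\theta \in \Pi$, where $\Pi := \{\beta \in \Phi : \fg_\beta \subseteq H\}$, and the right-hand side depends only on the left $W_\theta$-coset of $v$. Consequently $\{v : v \leq_\br yw_0\}$ would have to be a union of left $W_\theta$-cosets. Our hypothesis on $\Phi$ guarantees the existence of a simple root $\alpha \perp \theta$ (equivalently, a simple reflection in $W_\theta \setminus \{e\}$), and a direct Bruhat-order inspection near $yw_0$ produces an explicit $v \leq_\br yw_0$ with $yv \not\leq_\br yw_0$ (for instance, in $S_4$ with $y = s_2$, any permutation $v$ with $\{v(1),v(2)\} = \{2,4\}$ witnesses this). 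That contradicts the coset-closure of $\{v : v \leq_\br yw_0\}$.

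For general $x$ I would first reduce to the case $\fg_\theta \subseteq M_x$. If the projection of $M_x$ onto $\fg_\theta$ vanishes, then $M_x$ is contained in a proper $B$-invariant subspace of $\fg$, and a dimension or top-weight comparison with $X_{yw_0}$, combined with the highest-weight analysis underlying Proposition~\ref{prop.highest-wt-adjoint}, forces $\B(x,H) \neq X_{yw_0}$. Granted $\fg_\theta \subseteq M_x$, the implication ``$v \leq_\br yw_0 \Rightarrow v^{-1}\theta \in \Pi$'' persists via the $\fg_\theta$-piece of $M_x$. For each pair $(v_1, yv_1)$ with $v_1 \leq_\br yw_0$ and $yv_1 \not\leq_\br yw_0$, \eqref{pp-main} then forces some non-$\fg_\theta$ element $z \in M_x$ to satisfy $\Ad(\dot v_1^{-1})z \in H$ yet $\Ad(\dot v_1^{-1})\Ad(\dot y^{-1})z \notin H$; aggregating such constraints over sufficiently many pairs produces an inconsistency with $H$ being a single $B$-invariant subspace of $\fg$.

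The main obstacle is the bookkeeping in the general case: tracking precisely which non-$\fg_\theta$ root vectors are forced into $M_x$ and simultaneously out of $H$. A natural route is to handle $x = cE_\theta$ via the $W_\theta$-coset contradiction above and then dispatch general $x$ either by replacing $x$ with an appropriate $B$-conjugate representative in its adjoint orbit, or by an induction on $\dim M_x$ using that the $\fg_\theta$-component anchors the top of the $T$-weight filtration of $M_x$.
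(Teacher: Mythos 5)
Your setup is sound and the special case $x = cE_\theta$ is correct, but the argument for general $x$ is not actually completed, and it misses the single observation that makes the general case collapse to a one-line finish.

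In detail: the criterion \eqref{pp-main} — that $C_v \subseteq \B(x,H)$ iff $\Ad(\dot v^{-1})M_x \subseteq H$, where $M_x$ is the $B$-submodule generated by $x$ — is correct and is implicitly what the paper uses. For $x = cE_\theta$ you deduce that $\{v : v \leq_\br yw_0\}$ would be a union of left $W_\theta$-cosets and obtain a contradiction; that works. You also correctly note that $\fg_\theta \subseteq M_x$ should hold in the main case (this is exactly the Lie--Kolchin step, Lemma~\ref{liekolchinlemma}: $M_x$ is a nonzero $B$-module in an irreducible $G$-module, so it contains the unique one-dimensional $B$-stable line $\fg_\theta$). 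But from that point your proposal drifts: you analyze pairs $(v_1, yv_1)$ and say that ``aggregating such constraints over sufficiently many pairs produces an inconsistency,'' which is not a proof. The decisive step you are missing is to plug in the single value $v_1 = yw_0$. Since $yw_0 \leq_\br yw_0$ and $\fg_\theta \subseteq M_x$, the implication $v \leq_\br yw_0 \Rightarrow v^{-1}\theta \in \Pi$ gives $(yw_0)^{-1}\theta = w_0(y^{-1}\theta) = w_0\theta = -\theta \in \Pi$, i.e.\ $\fg_{-\theta} \subseteq H$. Now $\fg_{-\theta}$ is the \emph{lowest} weight line of the adjoint module, so the $B$-submodule it generates is the entire simple ideal $\l$; thus $\l \subseteq H$. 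This in turn forces $\dot w_0 B \in \B(x,H)$ (since $\dot w_0^{-1}\cdot x$ and $x$ differ only by an element of $\l \subseteq H$, and $x \in H$ because $eB \in X_{yw_0}$), contradicting $w_0 \not\leq_\br yw_0$. In other words, once $\fg_\theta \subseteq M_x$, no ``non-$\fg_\theta$ element $z$'' or aggregation over pairs is needed at all; the $\fg_\theta$-piece alone already kills $H$.

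Two further gaps worth flagging. First, your case ``the projection of $M_x$ onto $\fg_\theta$ vanishes'' is handled by a vague appeal to ``dimension or top-weight comparison''; the paper instead decomposes $\g = \z \oplus \bigoplus_i \l_i$ and writes $x = x_0 + \sum x_i$, observing that if the relevant component $x_j$ is zero then $\Ad(\dot w)x = x$ for all $w \in W_j$, so $\dot w_{0,j}B \in \B(x,H)$ immediately, contradiction. Second, the first sentence of the theorem concerns a reductive $\g$ that may have several simple ideals, so this decomposition is not optional; your proposal only engages with the irreducible ``in particular'' case and does not explain how the argument restricts to a single simple ideal $\l_j$ of rank $\geq 3$.
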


We begin by recalling some well-known facts about the  structure of reductive groups.  Given reductive $G$, let $Z^0$ be the connected component of the identity in the center $Z(G)$.  Then $G=Z^0G^\prime$, with the commutator subgroup $G^\prime \leq G$ being semsimple.  Moreover, $Z^0 \cap G^\prime$ is finite. (See for example \cite[Proposition 7.3.1, Corollary 8.1.6]{Springer}.)  As $G^\prime$ is semisimple, there exist nonabelian simple algebraic groups $L_1,\ldots,L_k$ such that $G^\prime$ is the central product $L_1 \circ \ldots \circ L_k$ (see for example \cite[Theorem 8.1.5]{Springer}).  Set $K:=Z^0 \cap G^\prime$ and $\overline{G}:=G/K \cong G^\prime/K \times Z^0/K$.  The projection of $G$ onto $\overline{G}$ induces a surjection of Lie algebras, which has trivial kernel as the finite group $K$ has trivial Lie algebra.  So, if $\g$, $\g^\prime$ and $\z$ are the respective Lie algebras of $G$, $G^\prime$ and $Z^0$, then $\g = \z \oplus \g^\prime$.  Each $L_i$ has finite center $Z(L_i) \leq Z(G^\prime)$, and $Z(G^\prime)=\prod_{i=1}^k Z(L_i)$.  For $i \in [k]$, let $\l_i$ be the Lie algebra of $L_i$.  Considering the projection of $G^\prime$ onto $G^\prime/Z(G^\prime) \cong L_1/Z(L_i) \times \ldots \times L_k/Z(L_k)$, we see that $\g^\prime=\bigoplus_{i=1}^k \l_i$.  The upshot of all this is that
\begin{itemize}
\item $\g=\z \oplus \bigoplus_{i=1}^k \l_i$;
\item if $i,j \in [k]$ with $i \neq j$, then $Ad(L_i)$ acts trivially on both $\z$ and $\l_j$;
\item if, for each $i \in [k]$, $\l_i=\h_i \oplus \bigoplus_{\alpha \in \Phi_i} \l_{i,\alpha}$ is a Cartan decomposition of $\l_i$, then $\h:=\z \oplus \bigoplus_{i=1}^k \h_i$ is a Cartan subalgebra of $\g$, and $\g=\h \oplus \bigoplus_{i=1}^k \bigoplus_{\alpha \in \Phi_i} \l_{i,\alpha}$ is a Cartan decomposition.
\end{itemize}
For each $i$, let $T_i$ and $B_i$ be, respectively, a maximal torus and Borel subgroup of $L_i$ with $T_i \leq T \cap B_i$ and $B_i \leq B$.  We may assume that the Cartan subalgebra $\h_i$ described above is the Lie algebra of $T_i$, that $\b_i:=\h_i \oplus \bigoplus_{\alpha \in \Phi_i^+}\l_{i,\alpha}$ is the Lie algebra of $B_i$, and that $\Phi=\bigcup_{i=1}^k \Phi_i$.  Set $W_i=N_{L_i}(T_i)/T_i$. Then $W=\prod_{i=1}^k W_i$.  Let $\theta_i$ be the highest root in the (irreducible, crystallographic) root system $\Phi_i$.

\begin{lemma} \label{lemma.highestrootstabilizer}
Let $\Gamma$ be an irreducible, crystallographic root system with associated Weyl group $X$, and let $\alpha \in \Gamma$.  The stabilizer of $\alpha$ in $X$ is trivial if and only if $\Gamma$ is of type $A_1$ or $A_2$.
\end{lemma}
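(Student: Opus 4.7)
The plan is to translate the triviality of $\text{Stab}_X(\alpha)$ into a condition on the existence of a root orthogonal to $\alpha$, and then verify that condition through the classification of irreducible crystallographic root systems.

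First, I would prove the equivalence: $\text{Stab}_X(\alpha)$ is trivial if and only if no root of $\Gamma$ is orthogonal to $\alpha$. The ``only if'' direction is immediate from the reflection formula $s_\beta(\alpha)=\alpha-\langle\alpha,\beta^\vee\rangle\beta$, since any root $\beta$ with $\beta\perp\alpha$ produces a nontrivial element $s_\beta\in\text{Stab}_X(\alpha)$. The converse direction follows from Steinberg's theorem for finite reflection groups, which asserts that the stabilizer of any vector is generated by the reflections fixing it; combined with the fact that the reflections in $X$ are precisely the root reflections $s_\beta$, this forces the generators to come from roots $\beta\in \Gamma\cap\alpha^\perp$.

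Next, I would reduce to a finite check by invoking the fact that $X$ acts transitively on each $W$-orbit of roots (partitioning $\Gamma$ by length into one or two orbits). Whether $\Gamma\cap\alpha^\perp$ is empty therefore depends only on the orbit of $\alpha$, so the problem becomes verifying the condition across finitely many pairs (irreducible type, root length).

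For the case analysis: in $A_1$ the only roots are $\pm\alpha$, neither orthogonal to $\alpha$, and in $A_2$ the six roots realize pairwise angles only in $\{0,\pi/3,2\pi/3,\pi\}$, so no two are orthogonal. For every other irreducible crystallographic type, I would exhibit a concrete orthogonal pair in each length orbit. In $A_n$ with $n\geq 3$ and in $D_n$ with $n\geq 4$, the pair $\epsilon_1-\epsilon_2 \perp \epsilon_3-\epsilon_4$ suffices. In $B_n$ and $C_n$ with $n\geq 2$, the orthogonal pairs $\epsilon_1\perp\epsilon_2$ and $\epsilon_1+\epsilon_2\perp\epsilon_1-\epsilon_2$ cover both length orbits. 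For $E_6,E_7,E_8$, and $F_4$, explicit orthogonal pairs in each orbit can be read off from the standard realizations of these root systems; for $G_2$, the 12 roots lie at angles $k\pi/6$, so orthogonal pairs are visible by inspection. The main obstacle is carrying out the case-by-case verification for the exceptional types, but each is a short finite check using the standard models.
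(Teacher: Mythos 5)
Your proof is correct, but it takes a genuinely different route from the paper's. The paper argues by counting: since $\operatorname{Stab}_X(\alpha)$ is trivial iff the orbit $X(\alpha)$ has size $|X|$, and since $|X(\alpha)|\leq|\Gamma|$, it suffices to observe that $|\Gamma|<|X|$ whenever $\operatorname{rank}\Gamma\geq 3$ and that in ranks $1$ and $2$ the equality $|\Gamma|=|X|$ holds but $\Gamma$ splits into two length-orbits for $B_2$ and $G_2$, forcing $|X(\alpha)|<|\Gamma|$ there while a single orbit fills $\Gamma$ in $A_1, A_2$. Your proof instead characterizes triviality of the stabilizer by the nonexistence of a root orthogonal to $\alpha$, invoking Steinberg's theorem (that the stabilizer of a point in a finite reflection group is generated by the reflections fixing it, and that the reflections in $X$ are exactly the $s_\beta$ for $\beta\in\Gamma$), and then verifies the orthogonality condition type by type. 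Both are sound; the paper's argument is shorter and avoids the classification beyond rank $2$, while yours requires Steinberg's theorem and a case check across all types but yields a more structural explanation (trivial stabilizer $\iff$ $\alpha^\perp\cap\Gamma=\varnothing$) and in particular isolates exactly which reflections fix $\alpha$. One small caution in your write-up: the inequality $|\Gamma|<|X|$ for rank $\geq 3$ that the paper uses would also shorten your exceptional-type checks, since a single orthogonal pair in each of $E_6,E_7,E_8,F_4$ is not strictly what is needed --- you need one for each $W$-orbit of roots, which you correctly note, but be sure to exhibit pairs in \emph{both} length orbits for $F_4$ (e.g.\ $\epsilon_1\perp\epsilon_2$ among short roots and $\epsilon_1+\epsilon_2\perp\epsilon_1-\epsilon_2$ among long roots, in the standard model).
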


\begin{proof}
 We observe that $\alpha \in \Gamma$ has trivial stabilizer in $X$ if and only if the $X$-orbit $X(\alpha)$ has size $|X|$.  If $\Gamma$ has rank three or more, then $|X(\alpha)| \leq |\Gamma|<|W|$ (see for example \cite[p. 43]{Carter}).  If $\Gamma$ is of type $B_2$ or $G_2$, then $|\Gamma|=|X|$, but since $\Gamma$ contains roots of two different lengths, $|X(\alpha)|<|\Gamma|$.  Inspection shows that if $\Gamma$ is of type $A_1$ or $A_2$ then every root in $\Gamma$ has trivial stabilizer in $X$. 
\end{proof}

\begin{lemma} \label{liekolchinlemma}
Let $\lambda$ be a strictly dominant weight for $G$, with associated highest weight representation $\psi:G \rightarrow GL(V(\lambda))$ and $0 \neq v \in V(\lambda)$. Then there is some $b \in B$ such that $\psi(b)v$ projects nontrivially onto $V(\lambda)_\lambda$.
\end{lemma}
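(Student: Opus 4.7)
The plan is to argue by contradiction, exploiting the Lie--Kolchin theorem (as suggested by the lemma's label) together with the fact that an irreducible highest weight module contains a unique $B$-stable line. Concretely, let $\pi \colon V(\lambda) \twoheadrightarrow V(\lambda)_\lambda$ be the projection along the weight-space decomposition $V(\lambda) = \bigoplus_\mu V(\lambda)_\mu$. Suppose, for contradiction, that $\pi(\psi(b)v) = 0$ for every $b \in B$. Then the linear span $U := \mathrm{span}_{\C}\{\psi(b)v : b \in B\}$ is a nonzero, finite-dimensional $B$-invariant subspace of $V(\lambda)$ that is contained in $\ker \pi = \bigoplus_{\mu \neq \lambda} V(\lambda)_\mu$.

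Next, I would apply the Lie--Kolchin theorem to $U$: since $B$ is connected and solvable, there exists a $B$-eigenvector $0 \neq u \in U$, i.e., a $B$-stable line $\C u \subseteq U$. Because $V(\lambda)$ is an irreducible $G$-module with highest weight $\lambda$, any $B$-stable line in $V(\lambda)$ coincides with the highest weight line $V(\lambda)_\lambda$; indeed, a $B$-eigenvector must be a $T$-weight vector annihilated by every positive root space, and by the standard highest-weight theory of irreducible $U(\fg)$-modules summarized in Section~\ref{sec.reps}, the only such weight occurring in $V(\lambda)$ is $\lambda$ itself. Therefore $\C u = V(\lambda)_\lambda$, which forces $V(\lambda)_\lambda \subseteq U \subseteq \ker \pi$. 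This contradicts the fact that $\pi$ restricts to the identity on $V(\lambda)_\lambda$, and the lemma follows.

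The argument is short, and the only step that requires any care is the uniqueness of the $B$-stable line in $V(\lambda)$; I expect this to be the main (minor) obstacle, since it relies on the irreducibility of the highest weight module rather than on the strict dominance of $\lambda$, and should be invoked explicitly. Note that strict dominance is not actually used in the proof — it is a hypothesis inherited from the context (Theorem~\ref{dem}) in which this lemma will be applied.
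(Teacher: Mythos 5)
Your proof is correct and takes essentially the same approach as the paper: apply Lie--Kolchin to the $B$-submodule of $V(\lambda)$ generated by $v$ to obtain a $B$-stable line, then use the fact that the unique $B$-stable line in the irreducible module $V(\lambda)$ is the highest weight line. Your observation that strict dominance is not needed is accurate and consistent with the paper's proof, which likewise does not invoke it.
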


\begin{proof}
By the Lie-Kolchin Theorem (see for example \cite[Theorem 17.6]{Humphreys-LAG}) there is some $1$-dimensional $B$-invariant subspace of the $B$-submodule of $V(\lambda)$ generated by $v$.  The only $1$-dimensional $B$-invariant subspace of $V(\lambda)$ is the highest weight space $V(\lambda)_\lambda$.
\end{proof}

We are ready to prove Theorem \ref{thm.not.adjoint}.  

\begin{proof}[Proof of Theorem~\ref{thm.not.adjoint}.] We pick $j \in [k]$ such that $\Phi_j$ is neither of type $A_1$ nor of type $A_2$.  Applying Lemma \ref{lemma.highestrootstabilizer} with $(\Gamma,X,\alpha)=(\Phi_j,W_j,\theta_j)$, we see that $\theta_j$ has nontrivial stabilizer in $W_j$.  So, fix $1 \neq y \in W_j$ with $y(\theta_j)=\theta_j$.  Let $w_{0,j}$ be the longest element of $W_j$.  Note that $\ell(yw_{0,j})<\ell(w_{0,j})$ so in particular, $w_{0,j} \not\leq_{\br} yw_{0,j}$.  

Assume for contradiction that there exist $x \in \g$ and a Hessenberg space $H \subseteq \g$ such that $X_{yw_{0,j}}=\B(x,H)$. Write $$x=x_0+\sum_{i=1}^k x_i,$$ with $x_0 \in \z$ and $x_i \in \l_i$ for all $i \in [k]$.  As $B=eB \in X_{yw_{0,j}}$, we see that $x \in H$.  If $x_j=0$ then $\dot{w} \cdot x=x$ for all $w \in W_j$. In particular, $\dot{w}_{0,j}B \in \B(x,H)$, which is impossible since $w_{0,j} \not\leq_\br yw_{0,j}$.  So, we assume now that $x_j \neq 0$. 

By Lemma \ref{liekolchinlemma}, there is some $b \in B_j$ such that $0 \neq b^{-1} \cdot x$ projects nontrivially onto $\g_{\theta_j}$.  As $X_{yw_{0,j}}$ is $B$-invariant and contains $\dot{y}\dot{w}_{0,j}B$, we see that $(b\dot{y}\dot{w}_{0,j})^{-1} \cdot x \in H$.  Now 
$$
(b\dot{y}\dot{w}_{0,j})^{-1} \cdot x = \dot{w}_{0,j}^{-1} \cdot(\dot{y}^{-1} \cdot (b^{-1} \cdot x)).
$$
As $\g_{\theta_j}$ is $B_j$-invariant, we see that $b^{-1} \cdot x$ projects nontrivially onto $\g_{\theta_j}$.   As $y \in (W_j)_{\theta_j}$,  also $\dot{y}^{-1} \cdot (b^{-1} \cdot x)$ projects nontrivially onto $\g_{\theta_j}$.  Finally, $w_{0,j}^{-1}=w_{0,j}$ maps $\theta_j$ to $-\theta_j$ and it follows that $(b\dot{y}\dot{w}_{0,j})^{-1} \cdot x$ projects nontrivially onto $\g_{-\theta_j}$.  

As $H$ projects nontrivially onto the $1$-dimensional root space $\g_{-\theta_j}$, we see that $\g_{-\theta_j} \subseteq H$.  Since $\g_{-\theta_j}$ generates the $B_j$-module $\l_j$ we must have $\l_j \subseteq H$.  In particular, $x_j \in H$, hence $x-x_j \in H$.  As 
$Ad(L_j)$ fixes $x-x_j$, it follows that for all $w \in W_j$,
$$
\dot{w}^{-1} \cdot x=x-x_j+\dot{w}^{-1} \cdot x_j \in H.
$$
In particular, $\dot{w}_{0,j}^{-1} \cdot x \in H$, leading again to the impossibility $\dot{w}_{0,j}B \in X_{yw_{0,j}}$.
\end{proof}

\begin{example}\label{ex.typeA} Consider the Adjoint representation of $SL_n(\C)$ on $\fg = \mathfrak{sl}_n(\C)$.  In this case $\theta = \epsilon_1-\epsilon_n$ and the stabilizer of $\theta$ is the subgroup $W_\theta = \left< s_2, \ldots, s_{n-2} \right>$. Thus $W_\theta \neq \{e\}$ whenever $n\geq 4$ \textup{(}that is, whenever the rank of $\mathfrak{sl}_n(\C)$ is at least $3$\textup{)}.  By Theorem~\ref{thm.not.adjoint}, every element of the right coset $W_\theta w_0$, except for $w_0$, has the property that the corresponding Schubert variety cannot be realized as a Hessenberg variety. Note that the reflection 
\[
s_\theta = s_{n-1}\cdots s_2s_1s_2\cdots s_{n-1} =(1,n)
\] 
is the minimal length coset representative for $W_\theta w_0$. Thus:
\[
W_\theta w_0 \setminus \{w_0\} = W_\theta s_\theta\setminus \{w_0\} = \{ ys_\theta : y\in W_\theta, y\neq y_0 \}
\]
where $y_0$ denotes the longest element of $W_\theta$. This shows $X_{ys_\theta}$ is not equal to any adjoint Hessenberg variety $\B(x,H)$ for all $y\in W_\theta \setminus \{y_0\}$.
\end{example}

Note the set $W_\theta s_\theta \setminus \{w_0\}$ from Example  \ref{ex.typeA} consists of all $w \in S_n$ such that $w_1=n$, $w_n=1$, and there exist $k,\ell\in \{2,\ldots, n-1\}$ such that $k<\ell$ and $w_k<w_\ell$.  In particular, all such $w$ contain the pattern $[4231]$.  The results of Section~\ref{sec.pattern-adjoint} extend Theorem~\ref{thm.not.adjoint} to a general statement about pattern avoidance in the type A case.

To conclude, we note that our proof of Theorem \ref{thm.not.adjoint} can be adjusted easily to obtain the following converse to Theorem~\ref{dem}.

\begin{proposition} \label{notstrict}
Let $\lambda$ be a dominant but not strictly dominant weight for $G$, with associated highest weight representation $\psi:G \rightarrow GL(V(\lambda))$.  Then there exists $w \in W$ such that no pair $(x \in V(\lambda),H \subseteq V(\lambda))$ with $H$ a Hessenberg space satisfies $\B(x,H)=X_w$.
\end{proposition}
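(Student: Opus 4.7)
The plan is to adapt the argument from Theorem~\ref{thm.not.adjoint}. Since $\lambda$ is dominant but not strictly dominant, the stabilizer $W_J := \Stab_W(\lambda)$ is nontrivial; I would pick any $1 \neq y \in W_J$ and set $w := yw_0$. Because $\ell(yw_0) = \ell(w_0)-\ell(y) < \ell(w_0)$, we have $w_0 \not\leq_\br yw_0$, so $\dot{w}_0 B \notin X_{yw_0}$, while $\dot{y}\dot{w}_0 B$ is the open cell of $X_{yw_0}$. Assume for contradiction that $\B(x,H) = X_{yw_0}$ for some $x \in V(\lambda)$ and some Hessenberg space $H \subseteq V(\lambda)$; observe first that $x \neq 0$, for otherwise $\B(x,H) = \B \neq X_{yw_0}$ immediately. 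The goal is to force $H = V(\lambda)$, which yields $\B(x,H) = \B = X_{w_0} \neq X_{yw_0}$.

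The heart of the argument is a weight-space calculation. Left $B$-invariance of $X_{yw_0}$ implies $b\dot{y}\dot{w}_0 B \in X_{yw_0}$ for every $b\in B$, and therefore
\[
\psi(\dot{w}_0^{-1}\dot{y}^{-1}b^{-1})\,x \in H \qquad \textup{for all } b\in B.
\]
The proof of Lemma~\ref{liekolchinlemma} uses only that $V(\lambda)_\lambda$ is the unique $B$-stable line in $V(\lambda)$, a property that holds for \emph{every} dominant $\lambda$; applying it to $x$, I would choose $b\in B$ so that $\psi(b^{-1})x = c v_\lambda + u$ with $c\neq 0$ and $u \in \bigoplus_{\mu \neq \lambda} V(\lambda)_\mu$. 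Since $y \in W_J$, the operator $\psi(\dot{y}^{-1})$ preserves the one-dimensional weight space $V(\lambda)_\lambda$, and $\psi(\dot{w}_0^{-1})$ then carries $V(\lambda)_\lambda$ isomorphically onto the one-dimensional weight space $V(\lambda)_{w_0(\lambda)}$. Meanwhile $\psi(\dot{w}_0^{-1}\dot{y}^{-1})u$ is supported on weight spaces $V(\lambda)_{w_0 y^{-1}(\mu)}$ with $\mu \neq \lambda$, and none of these equals $V(\lambda)_{w_0(\lambda)}$ because $w_0 y^{-1}(\mu) = w_0(\lambda)$ would force $\mu = y(\lambda) = \lambda$.

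Since $H$ is $T$-invariant, it decomposes into weight spaces, so the projection of $\psi(\dot{w}_0^{-1}\dot{y}^{-1}b^{-1})x$ onto $V(\lambda)_{w_0(\lambda)}$ lies in $H$; by the weight analysis above this projection is a nonzero multiple of $v_{w_0(\lambda)}$, so $v_{w_0(\lambda)} \in H$. The $B$-submodule of $V(\lambda)$ generated by $v_{w_0(\lambda)}$ is the Demazure module $H_{w_0(\lambda)}$, and $H_{w_0(\lambda)} = V(\lambda)$: by irreducibility $V(\lambda) = U(\g)v_{w_0(\lambda)}$, while PBW together with the fact that $\mathfrak{n}^-$ annihilates the lowest weight vector gives $V(\lambda) = U(\b)v_{w_0(\lambda)}$. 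Consequently $H = V(\lambda)$, and $\B(x,H) = \B$, contradicting $\B \neq X_{yw_0}$. The main obstacle is the weight bookkeeping in the middle step: one must simultaneously track the action of $w_0 y^{-1}$ on $\lambda$ and on all other weights of $V(\lambda)$ in order to isolate the $V(\lambda)_{w_0(\lambda)}$-component, and this is precisely where the hypothesis $y(\lambda) = \lambda$ is indispensable.
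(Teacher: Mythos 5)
Your proof is correct and is the natural adaptation of the paper's argument for Theorem~\ref{thm.not.adjoint} to the highest weight setting, which is exactly what the paper alludes to when stating that its proof ``can be adjusted easily.'' The irreducibility of $V(\lambda)$ lets you streamline the endgame (once $v_{w_0(\lambda)}\in H$ you immediately get $H=V(\lambda)$ via the PBW argument, avoiding the adjoint case's bookkeeping over simple ideals), and the Lie--Kolchin step plus the weight-space analysis with $y(\lambda)=\lambda$ are precisely the right replacements for the adjoint-specific steps.
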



\section{The equality question for type A adjoint Hessenberg varieties and pattern avoidance}
\label{sec.pattern-adjoint}

Assume that $G=SL_n(\C)$ throughout. 
In this section we prove Theorem~\ref{pattern.intro}, which says that if $w \in S_n$ contains the pattern $[4231]$, then there is no adjoint Hessenberg variety in $\B$ equal to $X_w$.  
We will see in Example~\ref{ex.no-pattern} that there is no pattern avoidance criterion characterizing the set of type A Schubert varieties that are adjoint Hessenberg varieties.  
Theorem~\ref{pattern.intro} remains true if we take $G=GL_n(\C)$, as can be shown with minor modifications, omitted herein, to our proofs.

As mentioned earlier, Theorem~\ref{pattern.intro} is much more powerful than Theorem~\ref{thm.not.adjoint.intro} when only type A is considered.  Indeed, Theorem~\ref{thm.not.adjoint.intro} implies that the number of $w \in S_n$ such that $X_w$ is not equal to an adjoint Hessenberg variety grows at least quadratically with $n$, while Theorem 1.8 says that the number of such $w$ is at least $n!-c^n$ for some constant $c$.

We begin by recording two basic facts that we will use repeatedly.  Each matrix $x \in \mathfrak{sl}_n(\C)$ can be written as a linear combination of elementary matrices $\{E_{ij} : (i,j) \in [n]\times [n]\}$. We define $\c_{ij}: \mathfrak{sl}_n(\C) \to \C$ to be the coordinate function returning the coefficient of $E_{ij}$ in such a combination.   
Throughout this section, we write $g \cdot x$ for $Ad(g)(x)$ whenever $g \in SL_n(\C)$ and $x \in \mathfrak{sl}_n(\C)$.

\begin{lemma} \label{lem.linearalg} Let $x\in \mathfrak{sl}_n(\C)$ with $\c_{ij}(x)\neq 0$ for some $(i,j)\in [n]\times [n]$ such that $i\neq j$.  If $(k,\ell) \in [n]\times [n]$ with $k\leq i$ and $\ell\geq j$ then there exists $b\in B$ such that $\c_{k,\ell}(b \cdot x) \neq 0$.
\end{lemma}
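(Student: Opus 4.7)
The plan is to construct $b\in B$ explicitly as a product of at most two unipotent transvections $I+\lambda E_{a,c}$ with $a<c$, and to show that $\c_{k\ell}(b\cdot x)$ becomes a nonzero polynomial in the parameters. The central computation, obtained directly from $(I+\lambda E_{a,c})^{-1}=I-\lambda E_{a,c}$ and elementary matrix multiplication, is the coordinate formula
\[
\c_{pq}\bigl((I+\lambda E_{a,c})\cdot y\bigr) = \c_{pq}(y) + \lambda\bigl(\delta_{p,a}\,\c_{cq}(y) - \delta_{q,c}\,\c_{pa}(y)\bigr) - \lambda^{2}\delta_{p,a}\delta_{q,c}\,\c_{ca}(y).
\]
Specialized to $(a,c)=(k,i)$ with $k<i$ and target $(p,q)=(k,j)$, this will give a ``row-shift'' $\c_{kj}((I+\lambda E_{k,i})\cdot x)=\c_{kj}(x)+\lambda\,\c_{ij}(x)$, a nonzero polynomial in $\lambda$ by hypothesis; specialized to $(a,c)=(j,\ell)$ with $j<\ell$, it will give an analogous ``column-shift.''

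My proof will then proceed by case analysis on $(k,\ell)$. When $(k,\ell)=(i,j)$, take $b=I$. When $k=i,\ell>j$, a single column-shift $b=I+sE_{j,\ell}$ will work; when $k<i,\ell=j$, a single row-shift $b=I+tE_{k,i}$ will work. When $k<i$, $\ell>j$, and $k\neq j$, I compose the two: first the row-shift $b_1=I+tE_{k,i}$ makes $\c_{kj}(b_1\cdot x)\neq 0$, after which applying the column-shift $b_2=I+sE_{j,\ell}$ at target $(k,\ell)$ yields $\c_{k\ell}(b_2b_1\cdot x)=\c_{k\ell}(b_1\cdot x)-s\,\c_{kj}(b_1\cdot x)$, a nonzero polynomial in $s$. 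The hypothesis $k\neq j$ is used here to kill the $\delta_{k,j}$ terms in the formula.

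The hard part will be the subcase $k=j$, which forces $j<i$ (since $k\leq i$ and $i\neq j$). If moreover $\ell=i$, I will conjugate directly by $b=I+tE_{j,i}$; the general formula then gives
\[
\c_{ji}(b\cdot x)=\c_{ji}(x)+t\bigl(\c_{ii}(x)-\c_{jj}(x)\bigr)-t^{2}\,\c_{ij}(x),
\]
a nonzero polynomial in $t$ because its $t^{2}$-coefficient is $-\c_{ij}(x)\neq 0$. If instead $\ell\neq i$ with $\ell>j$, I will \emph{reverse} the order of the shifts: first $b_1=I+sE_{j,\ell}$ yields $\c_{i\ell}(b_1\cdot x)=\c_{i\ell}(x)-s\,\c_{ij}(x)\neq 0$ for generic $s$ (using $i\neq j$), and then $b_2=I+tE_{j,i}$ yields $\c_{j\ell}(b_2b_1\cdot x)=\c_{j\ell}(b_1\cdot x)+t\,\c_{i\ell}(b_1\cdot x)\neq 0$ for generic $t$ (using $\ell\neq i$ to kill the $\delta_{\ell,i}$ terms). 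The main obstacle throughout is precisely this $k=j$ subcase, in which the naive row-then-column composition degenerates; it is resolved either by exploiting the quadratic $t^{2}$-contribution of a single transvection (when $\ell=i$) or by swapping the order of the two shifts (when $\ell\neq i$).
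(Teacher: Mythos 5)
Your proof is correct and takes essentially the same approach as the paper's: both build $b$ from at most two elementary unipotent transvections $I+\lambda E_{a,c}$, observe that the target coordinate becomes a nonzero polynomial in the parameter(s), and isolate the one genuinely degenerate configuration $(k,\ell)=(j,i)$ where a single conjugation by $I+tE_{j,i}$ still works thanks to the nonvanishing $t^2$-coefficient $-\c_{ij}(x)$. Your case analysis is more explicit about which order of shifts avoids the $\delta$-degeneracies than the paper's compressed reduction, but the underlying decomposition and computation are the same.
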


\begin{proof}
First suppose that $(k,\ell)=(j,i)$. This implies $j<i$ and so $b_\alpha:=I+\alpha E_{ji}\in B$ for all $\alpha\in \C$.
A direct computation shows that $\c_{ji}(b_\alpha\cdot x)=\c_{ji}(x)+\alpha (\c_{ii}(x)-\c_{jj}(x))-\alpha^2\c_{ij}(x)$. Thus, there exists $\alpha \in \C$ such that $\c_{ji}(b_\alpha\cdot x)\neq 0$.
If $i\neq \ell$ the lemma will follow from the existence of $b_1,b_2\in B$ such that $\c_{i\ell}(b_1\cdot x)\neq 0$ and $\c_{k\ell}(b_2\cdot(b_1\cdot x))\neq 0$.
Symmetrically, if $j\neq k$ the lemma will follow from the existence of $b_1,b_2\in B$ such that $\c_{kj}(b_1\cdot x)\neq 0$ and $\c_{k\ell}(b_2\cdot(b_1\cdot x))\neq 0$.
Therefore, to settle the case $(k,\ell)\neq(j,i)$ it suffices to consider the cases $i=k$ and $j=\ell$. 

 When $k=i$ and $\ell=j$ the statement of the Lemma is immediate so let us assume that this is not the case.
If $k=i$ then for each $\alpha \in \C$, $b_\alpha:=I+\alpha E_{j\ell}\in B$ and a direct computation shows that $\c_{i\ell}(b_\alpha\cdot x)=\c_{i\ell}(x)-\alpha \c_{ij}(x)$. Thus, there exists $\alpha \in \C$ such that $\c_{i\ell}(b_\alpha\cdot x)\neq 0$.
Similarly, If $\ell=j$ then $b_\alpha:=I+\alpha E_{ki}\in B$ and $\c_{kj}(b_\alpha\cdot x)=\c_{kj}(x)+\alpha \c_{ij}(x)$, which implies that there exists $\alpha \in \C$ such that $\c_{kj}(b_\alpha\cdot x)\neq 0$.
\end{proof}

Note that Lemma~\ref{lem.linearalg} gives us an explicit description of the partial order $\leq$ on $\Phi$ defined in Section~\ref{sec.demazure}.  Indeed, we have $\fg_{\epsilon_i-\epsilon_j} = \C\{E_{ij}\}$ and the lemma tells us that there exists $b\in B$ such that $\c_{k\ell}(b\cdot E_{ij})\neq0$ so $\fg_{\epsilon_k-\epsilon_\ell}$ is contained in the Demazure module $H_{\epsilon_i-\epsilon_j}$.  In summary, 
\[
\epsilon_i -\epsilon_k \leq \epsilon_k-\epsilon_\ell \Leftrightarrow k\leq i  \ \textup{ and } \ \ell\geq j.
\]  
Note that this description of $\leq$ can also be obtained from Lemma~\ref{lem.partial.order}.

\begin{lemma} \label{lem.elsh}
Say $H \subseteq \mathfrak{sl}_n(\C)$ is a Hessenberg space and $E_{ij} \in H$ with $i \neq j$.  If $k,\ell \in [n]$ with $k \neq \ell$ and both $k \leq i$ and $\ell \geq j$ hold, then $E_{k \ell} \in H$.  In other words, if $\fg_{\beta} \subseteq H$ then $\fg_\gamma\subseteq H$ for all $\gamma\in \Phi$ such that $\beta \leq \gamma$.
\end{lemma}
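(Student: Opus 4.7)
The plan is to deduce this lemma directly from Lemma~\ref{lem.linearalg} combined with the general principle recorded in equation~\eqref{eqn.Hproj}. The point of Lemma~\ref{lem.linearalg} is that for any matrix $x$ with a nonzero entry in position $(i,j)$, one can conjugate $x$ by a suitable element of $B$ to produce a matrix with a nonzero entry in any position $(k,\ell)$ located ``above and to the right'' (in the appropriate sense). Lemma~\ref{lem.elsh} is the incarnation of this principle for Hessenberg spaces: $B$-invariance of $H$ propagates containment of root spaces along the partial order~$\leq$.

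Concretely, I would proceed as follows. Apply Lemma~\ref{lem.linearalg} to $x = E_{ij}$, which satisfies $\c_{ij}(x) = 1 \neq 0$. Since the indices $(k,\ell)$ satisfy $k \leq i$, $\ell \geq j$, and $k\neq \ell$, the hypotheses of Lemma~\ref{lem.linearalg} are met, so there exists $b \in B$ with $\c_{k\ell}(b \cdot E_{ij}) \neq 0$. Since $H$ is a Hessenberg space, it is $Ad(B)$-invariant, and because $E_{ij} \in H$ we conclude $b \cdot E_{ij} \in H$. Hence the projection $\pi_{\epsilon_k - \epsilon_\ell}(b \cdot E_{ij})$ is nonzero, so by \eqref{eqn.Hproj} the full root space $\fg_{\epsilon_k-\epsilon_\ell} = \C\{E_{k\ell}\}$ lies in $H$, giving $E_{k\ell} \in H$ as claimed.

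For the restatement in terms of the Demazure order, I would invoke the description of $\leq$ derived immediately after the proof of Lemma~\ref{lem.linearalg}: namely $\epsilon_i - \epsilon_j \leq \epsilon_k - \epsilon_\ell$ if and only if $k \leq i$ and $\ell \geq j$. With this in hand, writing $\beta = \epsilon_i - \epsilon_j$ and $\gamma = \epsilon_k - \epsilon_\ell$, the implication ``$\fg_\beta \subseteq H$ and $\beta \leq \gamma$ implies $\fg_\gamma \subseteq H$'' is exactly what the first part of the lemma asserts.

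There is essentially no obstacle here; the lemma is a packaging of Lemma~\ref{lem.linearalg} and~\eqref{eqn.Hproj}. The only minor care needed is to verify that the case $(k,\ell) = (j,i)$ (i.e., the reflection of the original entry across the diagonal), which is handled separately in Lemma~\ref{lem.linearalg}, is also covered — but since Lemma~\ref{lem.linearalg} produces the desired $b \in B$ in that case as well, no additional argument is required.
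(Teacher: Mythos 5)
Your proof is correct and follows essentially the same route as the paper's: apply Lemma~\ref{lem.linearalg} to $x = E_{ij}$ to produce $b \in B$ with $\c_{k\ell}(b\cdot E_{ij}) \neq 0$, then use $B$-invariance of $H$ together with~\eqref{eqn.Hproj} to conclude $E_{k\ell} \in H$. The paper's proof is a more terse version of exactly this argument.
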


\begin{proof}
By Lemma~\ref{lem.linearalg} there exists $b\in B$ such that  $\c_{k,\ell}(b \cdot E_{ij}) \neq 0$. 
Since $H$ is $B$-invariant, we have $b\cdot E_{ij}\in H$ implying
$\g_{\epsilon_k-\epsilon_\ell}\subseteq H$ and 
$E_{k\ell}\in H$.
\end{proof}

Theorem \ref{pattern.intro} will follow directly from the next result.

\begin{lemma} \label{extratau}
Assume $w \in S_n$ and $1 \leq i<j<k<\ell \leq n$ with $w_\ell<w_j<w_k<w_i$.  Define $\tau \in S_n$ by
$$
\tau_m:=\left\{ \begin{array}{ll} w_m & m \not \in \{j,k\}, \\ w_k & m=j, \\ w_j & m=k. \end{array} \right.
$$
Let $x \in \fg=\mathfrak{sl}_n(\C)$ and let $H \subseteq \mathfrak{sl}_n(\C)$ be a Hessenberg space.  If the Schubert cell $C_{w^{-1}}$ is contained in the adjoint Hessenberg variety $\B(x,H)$, then $\dot{\tau}^{-1}B \in \B(x,H)$.
\end{lemma}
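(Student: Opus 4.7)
The hypothesis $C_{w^{-1}} \subseteq \B(x, H)$ translates to the condition $\dot{w}\cdot (b^{-1}\cdot x) \in H$ for all $b \in B$, and the goal is to deduce $\dot{\tau}\cdot x \in H$.  Since $\dot{w}\cdot x \in H$ (take $b=e$), and since $\dot{\tau} = \dot{w}\dot{s}$ for $s = (jk)$, it suffices to show $\dot{\tau}\cdot x - \dot{w}\cdot x = \dot{w}\cdot(\dot{s}\cdot x - x)$ lies in $H$.  This difference has nonzero $(a,b)$-entries only when $\{a,b\}$ meets $\{p, q\}$, where $p := w_j$ and $q := w_k$ (note $p < q$).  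The plan is to analyze this difference componentwise in the $T$-weight decomposition $H = (H\cap \h) \oplus \bigoplus_\gamma (H \cap \g_\gamma)$, where each root space $\g_\gamma$ is one-dimensional so that $H \cap \g_\gamma \in \{0, \g_\gamma\}$.

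The main technical tool is the following consequence of the hypothesis, which I call Property P: if $x_{i',j'} \neq 0$ for some $i' \neq j'$, and $(k', \ell')$ satisfies $k' \leq i'$, $\ell' \geq j'$, $k' \neq \ell'$, then $E_{a,b} \in H$ for every $(a,b)$ with $a \leq w(k')$, $b \geq w(\ell')$, $a \neq b$.  To derive this, one uses Lemma~\ref{lem.linearalg} to produce $b_0 \in B$ with $(b_0\cdot x)_{k',\ell'} \neq 0$; extracts the $\epsilon_{w(k')} - \epsilon_{w(\ell')}$-weight component of $\dot{w}\cdot (b_0^{-1}\cdot x) \in H$ (replacing $b_0$ by $b_0^{-1}$) to conclude $E_{w(k'), w(\ell')} \in H$; and then applies Lemma~\ref{lem.elsh} to propagate along the root order.

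For the off-diagonal weight components of $\dot{\tau}\cdot x - \dot{w}\cdot x$, the $(a,b)$-entry equals $x_{\tilde a, \tilde b} - x_{w^{-1}(a), w^{-1}(b)}$ where $\tilde a := \tau^{-1}(a)$, $\tilde b := \tau^{-1}(b)$, and when this is nonzero at least one of $a,b$ lies in $\{p, q\}$.  I would do case analysis on whether $\tilde a$ or $\tilde b$ lies in $\{j,k\}$, in each case exploiting the pattern inequalities $w_i > w_k$ and $w_\ell < w_j$ to invoke Property P.  For instance, if $\tilde a = j$ so $\tau(\tilde a) = w_k$, choose $k' = i$ in Property P using $w(i) = w_i > w_k$; symmetrically, if $\tilde b = k$ so $\tau(\tilde b) = w_j$, choose $\ell' = \ell$ using $w(\ell) = w_\ell < w_j$.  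Combined cases like $(\tilde a, \tilde b) = (j,k)$ use the extremal choice $(k',\ell') = (i,\ell)$.

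The most delicate point is the Cartan (weight-zero) component $(x_{kk} - x_{jj})(E_{pp} - E_{qq})$.  If $x_{kk} = x_{jj}$ this vanishes; otherwise I must produce $E_{pp} - E_{qq} \in H$.  The plan is to obtain $E_{qp} = E_{w_k, w_j} \in H$ by applying Property P with $(k', \ell') = (i, \ell)$ (the hypotheses $w(i) \geq w_k$ and $w(\ell) \leq w_j$ being exactly the pattern inequalities); then, since $E_{pq} \in \b$ (as $p < q$) and $H$ is $B$-invariant, the identity $[E_{pq}, E_{qp}] = E_{pp} - E_{qq}$ finishes the argument.  \textbf{The main obstacle} is handling the degenerate case when $x$ has no off-diagonal entry $(i', j')$ with $i' \geq i$ and $j' \leq \ell$, since Property P alone cannot then produce $E_{qp}$.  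Here I would analyze the Cartan shifts from $b = I + \alpha E_{m,n}$: the coefficient of $\alpha$ in the Cartan component of $\dot{w}\cdot(b^{-1}\cdot x) \in H$ gives $x_{nm}(E_{w_n, w_n} - E_{w_m, w_m}) \in H$ for each $m < n$, and suitable linear combinations of these for $(m,n)$ ranging over $\{(i,j), (i,k), (i,\ell), (j,\ell), (k,\ell)\}$ should recover $E_{pp} - E_{qq}$, unless enough entries of $x$ vanish to force $x_{jj} = x_{kk}$ and trivialize the Cartan contribution entirely.
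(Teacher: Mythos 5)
Your overall strategy matches the paper's: split $x$ into its diagonal part $s$ and off-diagonal part $x'$, handle the off-diagonal weight components by a case analysis using what you call Property P (which is exactly the repeated use of Lemmas~\ref{lem.linearalg} and~\ref{lem.elsh}), and handle the Cartan component $(x_{kk}-x_{jj})(E_{w_jw_j}-E_{w_kw_k})$ by producing $E_{w_kw_j}\in H$ and bracketing with $E_{w_jw_k}\in\b$. That much is correct and is what the paper does.

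The gap is in the degenerate case of the Cartan step. You correctly identify that if $x$ has no off-diagonal entry in the required region, Property P cannot produce $E_{w_kw_j}$, and you propose to fall back on ``Cartan shifts'': extract the coefficient of $\alpha$ in the diagonal part of $\dot{w}\cdot\bigl((I+\alpha E_{mn})^{-1}\cdot x\bigr)$, which is $x_{nm}(E_{w_nw_n}-E_{w_mw_m})$. But this quantity vanishes identically when $x$ is diagonal (all $x_{nm}=0$ for $m\neq n$), and a diagonal traceless $x$ certainly need not satisfy $x_{jj}=x_{kk}$. So your closing claim that ``enough entries of $x$ vanish to force $x_{jj}=x_{kk}$'' is false, and the fallback does nothing precisely in the worst case. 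The direction of your auxiliary trick is reversed from what is needed: you try to manufacture a Cartan element of $H$ from off-diagonal entries of $x$, whereas the correct move is to manufacture an off-diagonal entry from the Cartan part of $x$. Concretely, the paper observes that if there is no $(p,q)$ with $p\geq j$, $q\leq k$, $p\neq q$, $\c_{pq}(x)\neq 0$, then $\c_{jk}(b\cdot x')=0$ for all $b\in B$, while with $b:=I+E_{jk}$ one has $\c_{jk}(b\cdot s)=\pm(x_{kk}-x_{jj})\neq 0$ (this is where the assumption that the scalar is nonzero is used). Hence $\c_{jk}(b\cdot x)\neq 0$, and now Lemma~\ref{lem.linearalg} gives some $b'$ with $\c_{i\ell}(b'b\cdot x)\neq 0$, so $E_{w_iw_\ell}\in H$, then $E_{w_kw_j}\in H$ by Lemma~\ref{lem.elsh}, and the bracket $[E_{w_jw_k},E_{w_kw_j}]\in H$ finishes the Cartan step. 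Your write-up needs this replacement; otherwise the off-diagonal case analysis, once written out carefully (the paper tabulates eight cases), should go through as you describe.
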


\begin{proof}
Given $w$, $x$, and $H$ as in the statement, we assume $C_{w^{-1}} \subseteq \B(x,H)$, and write $\c_{pq}$ for $\c_{pq}(x)$ whenever $p,q \in [n]$ with $p \neq q$.  Set 
$$
x^\prime:=\sum_{p \neq q}\c_{pq}E_{pq},
\qquad
\text{and}
\qquad
s:=x-x^\prime\in \h.
$$
Then $x=x'+s$. We will show that both $\dot{\tau}\cdot x^\prime$ and $\dot{\tau}\cdot s$ lie in $H$, thus proving the lemma.

We know that $\dot{w}\cdot x \in H$, since $\dot{w}^{-1}B \in C_{w^{-1}} \subseteq \B(x,H)$.  In particular, $E_{pq} \in H$ whenever $p \neq q$ and $\c_{pq}(\dot{w}\cdot x) \neq 0$.  As $\c_{pq}(\dot{w}\cdot x^\prime)=\c_{pq}(\dot{w}\cdot x)$ whenever $p \neq q$, it follows that $\dot{w}\cdot x^\prime \in H$.  Therefore, $\dot{w}\cdot s=\dot{w} \cdot x-\dot{w}\cdot x^\prime \in H$.

We see now that to show $\dot{\tau} \cdot s \in H$, it suffices to show
\begin{equation} \label{diffinh}
\dot{w}\cdot s-\dot{\tau} \cdot s \in H.
\end{equation}
For $p \in [n-1]$, we write $h_p$ for $E_{pp}-E_{p+1,p+1}\in \h$.  There exist $c_p \in \C$ such that
$$
s=\sum_{p=1}^{n-1} c_ph_p.
$$
We observe that $\dot{w}\cdot E_{mm}=E_{w_m,w_m}$ for all $m \in [n]$, and similar for $\dot{\tau}$.  A direct computation shows that, whether or not $k=j+1$, we obtain
$$
\dot{w}\cdot s-\dot{\tau} \cdot s=(c_{j-1}-c_j-c_{k-1}+c_k)(E_{w_k,w_k}-E_{w_j,w_j}).
$$
If $(c_{j-1}-c_j-c_{k-1}+c_k)=0$, then (\ref{diffinh}) holds.  So, we assume from now on that
\begin{equation} \label{cdiff}
c_{j-1}-c_j-c_{k-1}+c_k \neq 0.
\end{equation}
In order to show that (\ref{diffinh}) holds, we must show that
\begin{equation} \label{2diffinh}
E_{w_k,w_k}-E_{w_j,w_j} \in H.
\end{equation}

We claim that there is some $b \in B$ with $\c_{jk}(b\cdot x) \neq 0$. Given this claim, as $i<j$ and $\ell>k$, Lemma \ref{lem.linearalg} implies that there exists $b^\prime \in B$ such that $\c_{i\ell}(b^\prime b \cdot x) \neq 0$.  It follows that
\begin{equation} \label{bbweq}
\c_{w_iw_\ell}(\dot{w}b^\prime b\cdot x) \neq 0.
\end{equation}
Since $(\dot{w}b^\prime b)^{-1}B \in C_{w^{-1}} \subseteq \B(x,H)$, we see that $\dot{w}b^\prime b \cdot x \in H$.  It follows from (\ref{bbweq}) that $E_{w_iw_\ell} \in H$.  Using Lemma \ref{lem.elsh}, we see that $E_{w_kw_j} \in H$.  Finally, as $E_{w_jw_k} \in \b$ and $H$ is a Hessenberg space, we get 
$$
E_{w_kw_k}-E_{w_jw_j}=-[E_{w_jw_k},E_{w_kw_j}] \in H.
$$
Thus (\ref{2diffinh}) and (\ref{diffinh}) hold.

We aim now to prove the claim that there exists $b \in B$ with $\c_{jk}(b \cdot x) \neq 0$. By Lemma \ref{lem.linearalg}, such $b$ exists if there are $p,q$ such that all of $p \neq q$, $j \leq p$, $q \leq k$, and $\c_{pq} \neq 0$ hold. We may therefore assume no such $p,q$ exist. In that case (see for example \cite[p.61]{Carter}) $\c_{jk}(b\cdot x^\prime)=0$ for all $b \in B$.  Now we choose
$$
b:=I+E_{jk} \in B.
$$
From direct calculation (or by \cite[p.61]{Carter}), we see that
$$
b\cdot s=s-(c_{j-1}-c_j-c_{k-1}+c_k)E_{jk}.
$$
It follows now from (\ref{cdiff}) that
$$
\c_{jk}(b \cdot x)=\c_{jk}(b \cdot x^\prime)+\c_{jk}(b \cdot s) \neq 0
$$
as desired.  This completes our proof that $\dot{\tau} \cdot s \in H$.

It remains to show that $\dot{\tau} \cdot x^\prime \in H$.  Equivalently, we must show that, for all $p,q \in [n]$ with $p \neq q$,
\begin{equation} \label{tauxprime}
\mbox{if } \c_{pq} \neq 0 \mbox{ then } E_{\tau_p\tau_q} \in H.
\end{equation}
Since $\dot{w}^{-1}B \in \B(x,H)$, we know that $E_{w_p,w_q} \in H$ whenever $p \neq q$ and $\c_{pq} \neq 0$.  Therefore, (\ref{tauxprime}) holds whenever $\{p,q\} \cap \{j,k\}=\varnothing$.
For all remaining pairs $(p,q)$, we will choose appropriate pairs $(d,e)$ and $(s,t)$ of indices such that 
	$$
	(w_s,w_t)=(\tau_p,\tau_q),\ \ d\le p,\  \ e\ge q,\ \ w_s\le w_d,\ \text{and}\ w_t\ge w_e.
	$$
Assertion~\eqref{tauxprime} follows in each case from an argument of the following form:
\begin{quote}
``Since $\c_{p,q} \neq 0$, it follows from Lemma \ref{lem.linearalg} that there is some $b \in B$ such that $\c_{d,e}(b\cdot x) \neq 0$.  Therefore, $\c_{w_dw_e}(\dot{w}b\cdot x) \neq 0$.  As $(\dot{w}b)^{-1}B \in C_{w^{-1}} \subseteq \B(x,H)$, we see that $E_{w_dw_e} \in H$.  By Lemma \ref{lem.elsh}, $E_{w_sw_t} \in H$."
\end{quote}

An exhaustive list of pairs $(p,q)$ and corresponding pairs $(d,e)$ and $(s,t)$ is given in the table below.  
$$
\begin{array}{|c|c|c|} \hline (p,q) & (d,e) & (s,t) \\ \hline \hline (j,k) & (i,\ell) & (k,j) \\ \hline (j,i) & (i,j) & (k,i) \\ \hline (j,q), q \not \in \{i,k\} & (i,q) & (k,q) \\ \hline (\ell,j) & (k,\ell) & (\ell,k) \\ \hline (p,j), p \not \in \{k,\ell\} & (p,k) & (p,k) \\ \hline (k,j) & (j,k) & (j,k) \\ \hline (k,q), q \neq j & (j,q) & (j,q) \\ \hline (\ell,k) & (i,\ell) & (\ell,j) \\ \hline \end{array}
$$
We conclude that (\ref{tauxprime}) holds and our proof is complete.
\end{proof}

We can now prove Theorem~\ref{pattern.intro}, restated here for the reader's convenience.

\begin{theorem} \label{pattern}
Let $w \in S_n$.  If some adjoint Hessenberg variety $\B(x,H) \subseteq SL_n(\C)/B$ is equal to the Schubert variety $X_{w}$, then $w$ avoids the pattern $[4231]$.
\end{theorem}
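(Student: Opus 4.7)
The strategy will be to assume, for contradiction, that $X_w = \B(x,H)$ for some $w \in S_n$ containing the pattern $[4231]$, and to produce a second permutation $v \in S_n$ that lies both above and below $w$ in the Bruhat order but is distinct from $w$, violating antisymmetry. The engine is Lemma~\ref{extratau}, but since that lemma concerns $C_{w^{-1}}$ rather than $C_w$, I will first pass to the inverse.

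The pattern $[4231]$ is its own inverse, so $w$ contains $[4231]$ if and only if $u := w^{-1}$ does; explicitly, if $w$ realizes the pattern at positions $i<j<k<\ell$ with $w_\ell<w_j<w_k<w_i$, then $u$ realizes it at positions $w_\ell<w_j<w_k<w_i$, with $u(w_i)=i < u(w_j)=j < u(w_k)=k < u(w_\ell)=\ell$. Because $C_{u^{-1}} = C_w \subseteq X_w = \B(x,H)$, Lemma~\ref{extratau} applied to $u$ at this pattern yields a permutation $\tau$ obtained from $u$ by interchanging its entries at positions $w_j$ and $w_k$, satisfying $\dot\tau^{-1}B \in \B(x,H) = X_w$. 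Consequently $v := \tau^{-1} \leq_\br w$.

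I will then compute $v$ explicitly. Writing $\tau = u \circ (w_j,w_k)$ in $S_n$ gives $v = (w_j,w_k) \circ w$, so $v$ is obtained from $w$ by swapping its entries at positions $j$ and $k$: $v_j = w_k$, $v_k = w_j$, and $v_m = w_m$ for $m \notin \{j,k\}$. In particular $v \neq w$. I will then apply the tableau criterion (Theorem~\ref{tc}) to show $w \leq_\br v$: for every $q$ with $q < j$ or $q \geq k$ the multisets $\{w_1,\dots,w_q\}$ and $\{v_1,\dots,v_q\}$ coincide, while for $j \leq q < k$ the latter is obtained from the former by replacing $w_j$ with the strictly larger $w_k$, so $I_{p,q}(v) \geq I_{p,q}(w)$ for all $p$. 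Combining $w \leq_\br v$ with $v \leq_\br w$, antisymmetry of $\leq_\br$ forces $v = w$, contradicting $v_j \neq w_j$.

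The main technical nuisance I anticipate is purely bookkeeping: distinguishing carefully between swapping \emph{values} (what Lemma~\ref{extratau} does to $u$) and swapping \emph{positions} (what results for $w$ after inversion), and making sure the $[4231]$ pattern is correctly transported from $w$ to $w^{-1}$. Once the composition of transpositions is tracked carefully, the tableau-criterion comparison is a two-line check, and no further geometric input is required beyond Lemma~\ref{extratau}.
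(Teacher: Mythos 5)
Your proof is correct and follows essentially the same route as the paper's: both reduce via the self-inverse property of $[4231]$, invoke Lemma~\ref{extratau} to produce an extra cell lying in $\B(x,H)$ but strictly above $w$ in Bruhat order, and derive a contradiction. The only difference is cosmetic: you invert at the outset and verify $w \leq_\br v$ with the tableau criterion before invoking antisymmetry, whereas the paper just notes $\ell(\tau) > \ell(w)$ (an ascending swap strictly increases length) so $\tau \not\leq_\br w$, which is slightly more economical.
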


\begin{proof}
It is well-known and straightforward to show that, given permutations $\sigma$ and $w$, $w$ contains the pattern $\sigma$ if and only if $w^{-1}$ contains $\sigma^{-1}$.  Since $[4231]$ is its own inverse, the theorem will follow once we show that if $\B(x,H)=X_{w^{-1}}$, then $w$ avoids the pattern $[4231]$.

If $\B(x,H)=X_{w^{-1}}$ then $C_{w^{-1}} \subseteq \B(x,H)$.  If $w$ contains the pattern $[4231]$ then $\dot{\tau}^{-1}B \in \B(x,H)$, with $\tau$ as in Lemma \ref{extratau}.  We observe that $\ell(\tau)>\ell(w)$, since $\tau$ is obtained from $w$ by exchanging the positions of two letters that appear in ascending order in $w$.  It follows that $\tau \not\leq_\br w$ and $\tau^{-1} \not\leq_\br w^{-1}$.  This forces $\B(x,H) \neq X_{w^{-1}}$.
\end{proof}

As mentioned in the introduction, the Marcus--Tardos Theorem guarantees that the number $\left|S_n(4231)\right|$ of permutations in $S_n$ avoiding $[4231]$ is bounded above by some exponential function of $n$.  In particular,
$$
\lim_{n \rightarrow \infty} \frac{\left|S_n(4231)\right|}{\left|S_n\right|}=0.
$$
So, for large $n$, Schubert varieties in $SL_n(\C)/B$ that are also Hessenberg varieties in $SL_n(\C)/B$ are extremely rare.  To our knowledge, the best exponential bounds for $\left|S_n(4231)\right|$ known currently are (for all large enough $n$)
$$
10.271^n \leq \left| S_n(4231) \right| \leq 13.5^n,
$$
due to Bevan, Brignall, Elvey Price and Pantone in \cite{BBPP}.  It is worth remarking that we do not know if there is any constant $c>1$ such that the number of Schubert varieties in $SL_n(\C)/B$ that are (equal to) Hessenberg varieties in $SL_n(\C)/B$ is at least $c^n$.

\bigskip

We can use similar reasoning as that used in the proof of in Lemma~\ref{extratau} to argue that certain Schubert varieties in $SL_4(\C)/B$ are not equal to any Hessenberg variety in $SL_4(\C)/B$.  These arguments do not extend as easily to pattern avoidance results, but we include one example here for the sake of clarity.  Example~\ref{ex.4patterns} below completely characterizes which Schubert varieties are equal to Hessenberg varieties in $SL_4(\C)/B$.

\begin{lemma}\label{lem.pattern.n=4} If $w\in \{[4123], [2341]\}$ then no adjoint Hessenberg variety is equal to $X_{w^{-1}}$.
\end{lemma}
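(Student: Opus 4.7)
I will focus on the case $w = [4123]$, aiming to show that $X_{w^{-1}} = X_{[2341]}$ is not an adjoint Hessenberg variety. The case $w = [2341]$ will follow from the symmetric argument, with the pair $(3,4)$ playing the role of $(1,2)$ below; alternatively, it may be deduced from the first case via the Dynkin-diagram involution of $A_3$, which exchanges $X_{[2341]}$ with $X_{[4123]}$ and preserves the class of adjoint Hessenberg varieties.

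Assume for contradiction that $X_{[2341]} = \B(x, H)$ for some $x \in \fg = \mathfrak{sl}_4(\C)$ and Hessenberg space $H$. Because $X_{[2341]}$ is $B$-invariant under left translation, so is $\B(x, H)$, and hence $\dot u \cdot (B \cdot x) \subseteq H$ for every $u \leq_\br [4123]$. Let $U = \{u : u \leq_\br [4123]\}$, whose longest element is $[4123]$. Write $x = s + n$ with $s \in \h$ and $n = \sum_{i \neq j} c_{ij} E_{ij}$. For each $(i,j)$ with $c_{ij} \neq 0$ and each $u \in U$, projecting $\dot u \cdot x \in H$ onto the root space $\fg_{\epsilon_{u(i)} - \epsilon_{u(j)}}$ places $E_{u(i) u(j)} \in H$. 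Combining this observation with Lemma~\ref{lem.linearalg}---which varies $(i,j)$ over the region $k \leq i$, $\ell \geq j$---and Lemma~\ref{lem.elsh}---which further closes the resulting set of roots upward under the partial order of~\eqref{eqn.Demazure-order}---produces a substantial list of root spaces contained in $H$.

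The central observation is that $[4123]$ sends $(1,2)$ to $(4,1)$. So whenever $c_{ij} \neq 0$ for some $(i,j)$ with $j \leq 2$, Lemma~\ref{lem.linearalg} yields $y \in B \cdot x$ with $c_{12}(y) \neq 0$; applying $\dot{[4123]}$ then places $E_{41}$ in $H$. Lemma~\ref{lem.elsh} forces every off-diagonal root space into $H$, and the commutators $[E_{ij}, E_{ji}]$ place all of $\h$ in $H$ as well, so $H = \fg$ and $\B(x, H) = \B$, contradicting $\B(x, H) = X_{[2341]}$. This reduces the problem to the case in which $n$ is supported in columns $\{3,4\}$ (equivalently $c_{ij} = 0$ whenever $j \leq 2$).

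In this remaining case the condition $\dot u \cdot x \in H$ decomposes as a conjunction of a root-space condition on $\dot u \cdot n$ and the condition $\dot u \cdot s \in H \cap \h =: \h'$. I will rule each possible support out case by case: the linear-algebra constraints forcing $\dot u \cdot s \in \h'$ for all $u \in U$ either compel $\h' = \h$, whereupon too many permutations enter $\B(x,H)$ (in particular some outside $U$), or force coincidences among the entries of $s$ that make a ``bad'' $u \notin U$ produce the same $\dot u \cdot s$ as some ``good'' $u' \in U$, again placing a cell outside $X_{[2341]}$ inside $\B(x,H)$. The technical heart of the proof---and the main obstacle---is this final combinatorial analysis, which must treat supports including negative-root coordinates such as $c_{43}$ and must account for all possible subspaces $\h' \subseteq \h$ consistent with the $B$-invariance constraint.
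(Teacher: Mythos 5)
Your reduction to the case where the off-diagonal part of $x$ is supported in columns $\{3,4\}$ is correct and matches the paper: any nonzero entry $c_{ij}$ with $j\leq 2$ propagates, via Lemma~\ref{lem.linearalg}, the action of $w=[4123]$, and Lemma~\ref{lem.elsh}, to force $E_{41}\in H$ and hence $H=\g$. You also correctly observe that $\B(x,H)=X_{[2341]}$ implies $B$-invariance and hence $\dot u\cdot(B\cdot x)\subseteq H$ for all $u\leq_\br[4123]$, and your appeal to the Dynkin involution for the $[2341]$ case is legitimate.

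The gap is the part you yourself flag as ``the main obstacle.'' Your proposed dichotomy --- either $\h'=\h$, ``whereupon too many permutations enter,'' or coincidences among the diagonal entries make a bad $u$ match a good $u'$ --- is not an argument and, as stated, is not even clearly correct. First, $\h'=\h$ alone does not force any additional Schubert cell into $\B(x,H)$: whether $\dot u\cdot x\in H$ still depends on the nilpotent part $\dot u\cdot n$, and you give no reason why the root-space conditions would be satisfied for some $u\not\leq[4123]$. Second, the constraint you are working with is $\dot u b\cdot x\in H$ for all $b\in B$, and $b\cdot x$ mixes the diagonal part $s$ with the off-diagonal part $n$ (e.g.\ when $c_{11}\neq c_{22}$ one can create a nonzero $(1,2)$-entry in $b\cdot x$ even with $n$ supported in columns $\{3,4\}$); so analyzing ``$\dot u\cdot s\in\h'$'' and ``$\dot u\cdot n\in H$'' as separate problems loses precisely the leverage the hypothesis provides. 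The paper's proof exploits this mixing directly: it successively extracts the equalities $c_{11}=c_{22}$, then $c_{11}=c_{33}$, and splits on the leftmost nonzero column of $n$ ($q_0\in\{3,4,5\}$). In each subcase a $B$-conjugation exposes a specific off-diagonal coordinate, forcing $E_{42}$ or $E_{43}$ into $H$ and hence a large Hessenberg subspace; combined with $x\in H$, this large subspace contains $\dot\tau\cdot x - x$ (for $\tau=[1342]$) or $\dot\sigma\cdot x - x$ (for $\sigma=[2314]$), the two Bruhat-minimal elements not below $[4123]$, giving the contradiction. The final subcase $q_0=5$ ($x$ diagonal) relies on the trace condition to rule out $x$ being scalar. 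None of these mechanisms appears in your sketch, and without them --- in particular without identifying the target permutations $[2314]$, $[1342]$ and the specific forced Hessenberg subspaces --- the ``combinatorial analysis'' you defer does not obviously close.
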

\begin{proof} First we consider $w=[4123]$.  Note that permutations $\sigma = [2314]$ and $\tau=[1342]$ are the Bruhat-minimal elements of $S_4$ such that $\sigma \nleq w$ and $\tau\nleq w$.  We argue that given $x\in \mathfrak{sl}_4(\C)$ and Hessenberg space $H\subseteq \mathfrak{sl}_4(\C)$ such that  $\B(x,H)\subseteq SL_4(\C)/B$ is a $B$-invariant Hessenberg variety with $X_{w^{-1}}\subseteq \B(x,H)$, then either $\dot\sigma^{-1}B\in \B(x,H)$ or $\dot\tau^{-1}B\in \B(x,H)$.  This proves $X_{w^{-1}}\subsetneq \B(x,H)$ and thus $X_{w^{-1}}$ is not equal to a Hessenberg variety.

To begin, write 
\[
x = \sum_{(p,q)\in [4]\times [4]} \c_{pq}E_{pq}
\]
where $\c_{pq} = \c_{pq}(x)$ for all $p,q$.

If $\{ q\in [4]: \c_{pq} \neq 0 \, \textup{ for some }\, p\neq q \}=\varnothing$ (i.e.~if $x$ is a diagonal matrix) then we set $q_0=5$.  Otherwise, set $q_0:= \min\{ q\in [4]: \c_{pq} \neq 0 \, \textup{ for some }\, p\neq q \}$.    

Suppose first that $q_0=1$ or $q_0=2$.  By Lemma~\ref{lem.linearalg}, there exists $b\in B$ such that $\c_{12}(b\cdot x)\neq 0$ and thus $\c_{41}(\dot w\cdot b\cdot x)\neq 0$.  Since $X_{w^{-1}}\subseteq \B(x,H)$ we know $b^{-1}\dot w^{-1}B\in \B(x,H)$ and therefore $E_{41}\in H$. It follows that $H=\mathfrak{sl}_4(\C)$.  So, $\B(x,H)= SL_4(\C)/B$ contains both $\dot\sigma^{-1}B\in \B(x,H)$ and $\dot\tau^{-1}B\in \B(x,H)$ in this case.

We assume now that $q_0\geq 3$.  This implies, in particular, that 
$$
\c_{21}=\c_{31}=\c_{41}=\c_{12}=\c_{32}=\c_{42}=0,
$$
hence
\begin{equation}\label{eqn.case3}
\quad \  x= \c_{11}E_{11}+\c_{22}E_{22} + \sum_{(p,q)\in [4]\times \{3,4\}} \c_{pq}E_{pq}.
\end{equation}
If $\c_{11}\neq \c_{22}$ then there exists $b\in B$ such that $\c_{12}(b\cdot x)\neq 0$.  (This is easily verified by direct calculation.) Since $\B(x,H)$ is $B$-invariant we see that $\B(x,H) = \B(b\cdot x, H)$.  Thus, if $\c_{11}\neq \c_{22}$ we can argue $\B(x,H) = SL_4(\C)/B$ as above.  So, we assume $\c_{11}=\c_{22}$ for the remainder of the proof.

If $q_0=3$, then $\c_{p3} \neq 0$ for some $p\in \{1,2,4\}$.  By Lemma~\ref{lem.linearalg}, there exists $b\in B$ such that $\c_{13}(b\cdot x) \neq 0$ and therefore $\c_{42}(\dot w\cdot b \cdot x)\neq 0$.  Since $b^{-1}\dot w^{-1}B\in \B(x,H)$ we get $E_{42}\in H$.  Thus: 
\[
\mathfrak{sl}_4(\C)\cap \C\{E_{pq} : p\in [4], q\in \{2,3,4\} \}  \subseteq H.
\]
Since we are working under the assumptions of~\eqref{eqn.case3} above and $\c_{11}=\c_{22}$, we have
\[
\dot\tau\cdot x - x \in \mathfrak{sl}_4(\C)\cap\C\{E_{pq} : p\in [4], q\in \{2,3,4\} \} \subseteq H
\]
and since $x\in H$ (because $eB \in X_{w^{-1}}\subseteq \B(x,H)$), we get $\dot\tau\cdot x\in H$ in this case.

Now assume $q_0\geq 4$.  So, $\c_{pq}=0$ for all $q\in\{1,2,3\}$ and $p\neq q$.  Thus~\eqref{eqn.case3} becomes:
\begin{eqnarray}\label{eqn.case4}
 x= \c_{11}E_{11}+\c_{11}E_{22}+\c_{33}E_{33}+ \sum_{p\in [4]}\c_{p4}E_{p4}
\end{eqnarray}
As above, a direct computation shows that if $\c_{11}\neq \c_{33}$ then there exists $b\in B$ such that $\c_{31}(b\cdot x)\neq 0$.  The arguments of the previous paragraph then imply $\dot\tau^{-1}B\in \B(b\cdot x, H) = \B(x,H)$.  We therefore assume that $\c_{11}=\c_{33}$ for the remainder of the proof.

If $q_0=4$ then $\c_{p4} \neq 0$ for some $p\in \{1,2,3\}$.  By Lemma~\ref{lem.linearalg} there exists $b\in B$ such that $\c_{14}(b\cdot x)\neq 0$ and therefore $\c_{43}(\dot w\cdot b\cdot x) \neq 0$.  This implies $E_{43}\in H$ and thus
\[
\mathfrak{sl}_4(\C)\cap \C\{E_{pq}: p\in [4], q\in \{3,4\}\} \subseteq H.
\]
The assumptions~\eqref{eqn.case4} and $\c_{11}=\c_{33}$ now imply $\dot \sigma \cdot x -x\in H$ and so $\dot\sigma^{-1}B\in \B(x,H)$ in this case.

Finally, if $q_0=5$ then we must have
\[
x = \c_{11}E_{11}+\c_{11}E_{22}+\c_{11}E_{33}+\c_{44}E_{44}.
\]
If $\c_{11}\neq \c_{44}$ then there exists $b\in B$ such that $\c_{14}(b\cdot x) \neq 0$ and the argument of the previous paragraph implies $\dot\sigma^{-1}B\in \B(b\cdot x, H)=\B(x,H)$.  We may therefore assume $\c_{11}=\c_{44}$.  However, this is impossible as $x\in \mathfrak{sl}_4(\C)$.  This concludes the proof for $w=[4123]$.

The proof for $w=[2341]$ follows from similar reasoning using $\sigma = [3124]$ and $\tau=[1423]$. Exchanging the roles of rows and columns in the proof above yields the desired result.  For brevity, we omit the details here.
\end{proof}

\begin{example}\label{ex.4patterns} Let $n=4$. In this case, $X_{w^{-1}}$ is an adjoint Hessenberg variety if and only if 
\[
w \notin \{ [4231], [2341],[4123],[1342],[3124] \}.
\]
Indeed, the first three elements of the set above are exactly the patterns from Theorem~\ref{pattern} and Lemma~\ref{lem.pattern.n=4}.  The proof that $X_{w^{-1}}$ is not equal to an adjoint Hessenberg variety when $w=[1342]$ or $w=[3124]$ is similar to that of Lemma~\ref{lem.pattern.n=4}. Finally, the remaining 19 permutations can each be realized as an adjoint Hessenberg variety.  Indeed, 12 of these permutations have the property that $X_{w^{-1}}$ can be realized as highest weight adjoint Hessenberg varieties by Proposition~\ref{prop.highest-wt-adjoint}.  Of the remaining 12 permutations that are not maximal length coset representatives for $S_4/\left< s_2 \right>$, five appear above and the remaining 7 can be realized as Hessenberg varieties using the data from Table~\ref{figure.table1}.  In this table we denote $H$ as a matrix with starred entries; this means that 
\[
H= \mathfrak{sl}_4(\C) \cap \C\{ E_{ij} :  \textup{ entry $(i,j)$ contains a $*$ } \}.
\]  

\begin{table}[h]
\[
\begin{array}{|c|c|c|c|}\hline
w\in S_4 \textup{ such that } \ell(ws_2)>\ell(w) & \textup{ $X_{w^{-1}} = \B(x, H)$ in $SL_4(\C)/B$? } &  x & H \\   \hline
[4231] & \textup{ no } & - & - \\ \hline 
[3241] & \textup{ yes } & E_{13}+E_{24} & \begin{bmatrix} * & * & * & * \\   * & * & * & * \\   * & * & * & * \\  0 & 0 & 0 & 0 \end{bmatrix} \\ \hline
[2341] & \textup{ no } & - & - \\ \hline 
[1342] & \textup{ no } & - & - \\ \hline 
[3142] & \textup{ yes } & E_{13}+E_{24} & \begin{bmatrix} 0 & * & * & * \\   0 & * & * & * \\   0 & * & * & * \\  0 & 0 & 0 & 0 \end{bmatrix} \\ \hline
[4132] & \textup{ yes } & E_{13}+E_{24} & \begin{bmatrix} 0 & * & * & * \\   0 & * & * & * \\   0 & * & * & * \\  0 & * & * & * \end{bmatrix} \\ \hline
[1243] & \textup{ yes } & E_{12}+E_{24} & \begin{bmatrix} 0 & * & * & * \\   0 & 0 & * & * \\   0 & 0 & 0 & 0 \\  0 & 0 & 0 & 0 \end{bmatrix} \\ \hline
[2143] & \textup{ yes } & E_{13}+E_{24} & \begin{bmatrix} 0 & 0 & * & * \\   0 & 0 & * & * \\   0 & 0 & 0 & 0 \\  0 & 0 & 0 & 0 \end{bmatrix} \\ \hline
[4123] & \textup{ no } & - & - \\ \hline
[3124] & \textup{ no } & - & - \\ \hline
[2134] & \textup{ yes } & E_{13}+E_{34} & \begin{bmatrix} 0 & 0 & * & * \\   0 & 0 & * & * \\   0 & 0 & 0 & * \\  0 & 0 & 0 & 0 \end{bmatrix} \\ \hline
[1234] & \textup{ yes } & E_{12}+E_{24} & \begin{bmatrix} 0 & * & * & * \\   0 & 0 & 0 & * \\   0 & 0 & 0 & 0 \\  0 & 0 & 0 & 0 \end{bmatrix} \\ \hline
\end{array}
\]
\begin{caption}{Answer to the equality question for all Schubert varieties $X_{w^{-1}}$ where $w\in S_4$ is such that $\ell(ws_2)>\ell(w)$.  } 
\label{figure.table1} 
\end{caption}
\end{table}
\end{example}

Given Theorem~\ref{pattern.intro}, it is natural to ask whether the property that $X_w$ is an adjoint Hessenberg variety is characterized by pattern avoidance.  This is not the case, as the next example shows.  
\begin{example}\label{ex.no-pattern} Let $n=5$ and $w=[13425]\in S_5$. A direct computation shows  when 
\[
H = \mathfrak{sl}_5(\C) \cap \C\{ E_{12},E_{13}, E_{14}, E_{15}, E_{25}, E_{35} \}
\]
we get that
$
\B(E_{12}+E_{25},H)=X_{w^{-1}}.
$
Therefore, although $w$ must avoid the pattern $[1342]$ when $n=4$ in order to be realized as an adjoint Hessenberg variety, this is not the case when $n=5$. 
\end{example}

As we have seen in Section~\ref{sec.highestwt} every highest weight Hessenberg variety is $B$-invariant.
The question of whether or not an arbitrary adjoint Hessenberg variety is $B$-invariant is much more nuanced that in the highest weight case.
Indeed, 
Examples~\ref{ex.4patterns} and \ref{ex.no-pattern} exhibit $B$-invariant adjoint Hessenberg varieties that are not highest weight Hessenberg varieties. 
This motivates the following open question. 

\begin{question} Given an adjoint Hessenberg variety $\B(x,H)$ such that $x \notin \C\{E_\theta\}$, what conditions on $x$ and Hessenberg space $H$ guarantee that $\B(x,H)$ is $B$-invariant?  In the case that $\B(x,H)$ is $B$-invariant, what conditions guarantee it is irreducible?
\end{question}

\section{The isomorphism question for type A adjoint Hessenberg varieties}\label{sec.typeA}

Our goal is to prove the following result, which is a restatement of Theorem~\ref{adjoint.intro} above.

\begin{theorem} \label{adjoint} Suppose $n\geq 6$.
Assume that $G=GL_n(\C)$ or $G=SL_n(\C)$ and $B$ is the Borel subgroup of $G$ consisting of upper triangular matrices.  Let $w_0$ be the longest element of the Weyl group $W=S_n$, and for $i \in [n-1]$, let $s_i \in W$ be the transposition $(i,i+1)$.  If $3 \leq i \leq n-3$, then there do not exist $x \in \g$ and subspace $H \subseteq \g$ such that $[\b,H] \subseteq H$ and $\B(x,H)$ is isomorphic with $X_{s_iw_0}$.
\end{theorem}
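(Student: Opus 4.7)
The plan is to combine Proposition~\ref{eulercharprop.intro} with a direct computation of $\chi(X_{s_iw_0})$ to obtain a divisibility obstruction that rules out isomorphism for most $(n,i)$ in the stated range, and then to sharpen to a Poincar\'e polynomial argument for the two exceptional cases in which the numerics happen to be compatible.

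First I observe that since $\ell(s_iw_0) = \binom{n}{2} - 1 = \dim \B - 1$, the Schubert variety $X_{s_iw_0}$ is an irreducible divisor in $\B$. Consequently, if an adjoint Hessenberg variety $\B(x,H)$ is isomorphic to $X_{s_iw_0}$, then $\B(x,H)$ itself is irreducible and of codimension one in $\B$. Proposition~\ref{eulercharprop.intro} then forces $(n-2)!$ to divide $\chi(\B(x,H)) = \chi(X_{s_iw_0})$.

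Next I would compute $\chi(X_{s_iw_0})$ using the Bruhat cell decomposition. The order-reversing anti-automorphism $v \mapsto vw_0$ of the Bruhat order gives
\[
\chi(X_{s_iw_0}) \;=\; |\{v \in S_n : v \leq_\br s_iw_0\}| \;=\; |\{u \in S_n : u \geq_\br s_i\}|,
\]
and by Matsumoto's theorem the complementary set $\{u : u \not\geq_\br s_i\}$ is exactly the set of permutations no reduced word of which contains $s_i$, equivalently the parabolic subgroup $\langle s_j : j \neq i\rangle \cong S_i \times S_{n-i}$, of order $i!(n-i)!$. Hence $\chi(X_{s_iw_0}) = n! - i!(n-i)!$. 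Writing $n! = n(n-1)(n-2)!$, the divisibility $(n-2)! \mid \chi(X_{s_iw_0})$ becomes $\binom{n-2}{i} \mid (n-i)(n-i-1)$. A short case-by-case check (using that $\binom{n-2}{i}$ grows faster than $(n-i)(n-i-1)$ once both $i$ and $n-i$ are at least $3$) shows this divisibility fails for every $n \geq 6$ and every $3 \leq i \leq n-3$ except $(n,i) \in \{(8,3),(8,5)\}$, where the two quantities are in fact equal.

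For the two exceptional cases the Euler characteristic test is insufficient, and I would upgrade the argument to Poincar\'e polynomials. Both $X_{s_iw_0}$ and any codimension-one adjoint Hessenberg variety in $GL_8(\C)/B$ admit affine pavings (by the paving results of Tymoczko and the second author cited in the introduction), so an isomorphism would force the Poincar\'e polynomials to coincide. I would attempt to narrow the list of candidate pairs $(x,H)$ producing an irreducible codimension-one Hessenberg variety by exploiting the Jordan decomposition of $x$ together with the Hessenberg constraint $[\fb,H]\subseteq H$, and then compare Poincar\'e polynomials directly in each remaining case against that of $X_{s_3w_0}$ (equivalently, $X_{s_5w_0}$) in $SL_8(\C)/B$. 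The principal obstacle is precisely this last step: one needs a sufficiently fine structural description of irreducible codimension-one adjoint Hessenberg varieties in $GL_8(\C)/B$ to handle the exceptional cases combinatorially.
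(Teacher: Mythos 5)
Your opening divisibility argument is exactly the paper's: observe that $X_{s_iw_0}$ is an irreducible divisor, so any adjoint Hessenberg variety isomorphic to it must itself be irreducible of codimension one, and then invoke Proposition~\ref{eulercharprop.intro} to force $(n-2)!\mid\chi(X_{s_iw_0})=n!-i!(n-i)!$ (this identity is the paper's Lemma~\ref{schuec}, which you rederive correctly via the Bruhat anti-automorphism). Your reformulation of the divisibility as $\binom{n-2}{i}\mid(n-i)(n-i-1)$ is correct and somewhat cleaner than the paper's direct inspection, and it correctly isolates $(n,i)\in\{(8,3),(8,5)\}$ as the only survivors.

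However, there is a genuine gap in your treatment of those two exceptional cases, and you are candid about it: ``I would attempt to narrow the list of candidate pairs $(x,H)$\dots The principal obstacle is precisely this last step.'' What is missing is the structural input that actually does the narrowing. The paper supplies this with two lemmas. First, Lemma~\ref{jtcor} (relying on Lemma~\ref{smallcodim} and Proposition~\ref{prop.maxeigenspace}) shows that for $\B(x,H)$ to have codimension one either $H=H(\overline{-\theta})$ or $x$ has an eigenspace of dimension $n-1$. Second, Lemma~\ref{largeeigenlem} disposes of the large-eigenspace alternative by showing that in that case any irreducible codimension-one $\B(x,H)$ is isomorphic to $X_{s_1w_0}$ or $X_{s_{n-1}w_0}$, which are not isomorphic to $X_{s_3w_0}$ or $X_{s_5w_0}$ (they have different Euler characteristics by Lemma~\ref{schuec}). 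With $H=H(\overline{-\theta})$ pinned down, the paving combinatorics from~\cite{Precup2013} give $\chi(\B(x,H(\overline{-\theta})))=n!-(n-2)!\cdot|\{(j<k):c_{jk}\neq 0\}|$, so matching $\chi(X_{s_iw_0})=8!-3!5!$ forces the nilpotent part $x_n$ to be a single elementary matrix $E_{jk}$; the finitely many remaining candidates are then excluded by comparing Poincar\'e polynomials via the closed formulas in~\cite[Corollary 5.5]{Precup2013}. Your proposal never reaches the step where $H$ is determined, so the Poincar\'e polynomial comparison you envision would have to range over all Hessenberg spaces, not just $H(\overline{-\theta})$, and you give no mechanism for making that tractable. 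To complete your argument you would need to prove (or import) something equivalent to Lemmas~\ref{jtcor} and~\ref{largeeigenlem}.
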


Assume that we have fixed $n \geq 4$.  As each Schubert variety $X_{s_iw_0}$ is irreducible of codimension one in the flag variety $\B$, our plan is to study closely the structure of adjoint Hessenberg varieties of codimension one in $\B$.  In particular, we study the Euler characteristic of irreducible adjoint Hessenberg varieties of codimension one.  We will see that in all but two cases, if $3 \leq i \leq n-3$, no such variety has the same Euler characteristic as $X_{s_iw_0}$.  The remaining two cases will be handled by examining Betti numbers.

A few remarks are in order.  
First, Proposition~\ref{prop.highest-wt-adjoint} tells us that for all $n\leq 3$, every Schubert variety is equal to an adjoint Hessenberg variety and that for all $n$, $X_{s_1w_0}$ and $X_{s_{n-1}w_0}$ are adjoint Hessenberg varieties.
  Second, Theorem~\ref{adjoint} does not resolve the isomorphism question for the Schubert varieties $X_{s_2w_0}$ and $X_{s_{n-2}w_0}$ whenever $n\geq 4$.  
In fact, the existence Hessenberg varieties in $\B$ with the same Betti numbers as $X_{s_2w_0}$ and $X_{s_{n-2}w_0}$ renders the methods used to prove Theorem~\ref{adjoint} useless.  Recall that $\theta=\varepsilon_1-\varepsilon_n$ is the highest root in $\Phi$.  We define the Hessenberg space
$$
H(\overline{-\theta}):=\h \oplus \bigoplus_{\alpha \in \Phi \setminus \{-\theta\}} \g_\alpha.
$$
As is standard, we define the Poincar\'{e} polynomial of a space $X$ as
$$
\textup{Poin}(X,q):=\sum_{i \geq 0}\dim_\C H^i(X;\C)q^i.
$$

\begin{example}\label{ex.Betti} Let $n\geq 4$ and suppose $v=s_2w_0$ or $v=s_{n-2}w_0$.  We then have
\begin{equation} \label{poinpoly}
\textup{Poin}(X_v, \sqrt{q}) = [n-2]_q! \left( [n]_q [n-1]_q - q^{2n-3}-q^{2n-4} \right).
\end{equation}
\textup{(}Here $[n]_q = 1+q+\cdots + q^{n-1}$ and $[n]_q!=\prod_{j=1}^n[j]_q$.\textup{)}  Using the Betti number formulas from either~\cite{Tymoczko2006} or~\cite{Precup2013} along with (\ref{poinpoly}), one can check that if $x=E_{1,n-1}+E_{2,n}$ or $x=E_{1,2}+E_{2,n}$ then $\textup{Poin}(X_v,q) = \textup{Poin}(\B(x, H(\overline{-\theta})),q)$.  One can also confirm this equality for $n\leq 9$ using the tables of Poincar\'e polynomials found on Tymoczko's website~\cite{Tymoczko-web}.
\end{example}

We do not know if either of $X_{s_2w_0}$ and $X_{n-2w_0}$ is isomorphic to either of the Hessenberg varieties appearing in Example~\ref{ex.Betti}.

We now commence our study of codimension one adjoint Hessenberg varieties.  Our main goal is to prove Proposition~\ref{eulercharprop.intro}, restated below for the reader's convenience.

\begin{proposition}
\label{eulercharprop}
If $x \in \g$ and $H \subseteq \g$ is a Hessenberg space such that $\B(x,H)$ is irreducible and has codimension one in $\B$, then the Euler characteristic $\chi(\B(x,H))$ is divisible by $(n-2)!$.
\end{proposition}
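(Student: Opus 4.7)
The plan combines Tymoczko's affine paving of adjoint Hessenberg varieties in type A (with Precup's extension removing the $\b\subseteq H$ hypothesis) with a fibration argument. By the paving, $\chi(\B(x,H))$ equals the number of nonempty cells $\B(x,H)\cap C_w$. Since $\B(x,H)$ is irreducible of codimension one, these cells have a unique member of maximal dimension $\binom{n}{2}-1$; call its supporting Weyl element $w$. Because each cell has dimension at most $\ell(w)=\dim C_w$, either $w=w_0$ and the top cell is a codimension-one affine hyperplane inside $C_{w_0}$ (the generic case), or $\ell(w)=\binom{n}{2}-1$ and the top cell fills $C_w$ (the Schubert case), so that $\B(x,H)=X_w$.

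In the Schubert case, $w=s_iw_0$ for some $i\in[n-1]$. Theorem~\ref{pattern.intro} forces $s_iw_0$ to avoid the pattern $[4231]$; a direct inspection of the one-line notation $s_iw_0=[n,n-1,\ldots,i+2,i,i+1,i-1,\ldots,1]$ shows that the subsequence at positions $1,n-i,n-i+1,n$ realizes $[4231]$ whenever $2\le i\le n-2$. Hence $i\in\{1,n-1\}$, and the Tableau Criterion yields
\[
\chi(X_{s_iw_0})=n!-i!\,(n-i)!=(n-1)^2(n-2)!,
\]
which is visibly divisible by $(n-2)!$.

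In the generic case, my aim is to exhibit a surjective morphism $\pi\colon \B(x,H)\to Y$ all of whose fibers are isomorphic to $\fl(n-2)$, the flag variety of $\C^{n-2}$; this immediately yields $\chi(\B(x,H))=\chi(Y)\cdot(n-2)!$. The natural candidate is the restriction of the $G$-equivariant projection $\B\to G/P$, where $P=P_{1,n-2,1}$ is the parabolic stabilizing $(F_1,F_{n-1})\subseteq\C^n$; its fibers $P/B\cong\fl(n-2)$ are the ``freedom'' in the intermediate subspaces $V_2\subset\cdots\subset V_{n-2}$ between $V_1$ and $V_{n-1}$. This restriction has all fibers equal to $\fl(n-2)$ precisely when $H$ is $P$-invariant under the adjoint action, in which case the condition $g^{-1}\cdot x\in H$ depends only on the pair $(V_1,V_{n-1})$. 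The only codimension-one Hessenberg space with this stronger invariance is $H(\overline{-\theta})=\g\ominus\g_{-\theta}$, so my plan is to show that in the generic case one may replace $H$ by $H(\overline{-\theta})$ without altering $\B(x,H)$.

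The main obstacle is justifying this replacement. The codimension-one equation cutting out $\B(x,H)\cap C_{w_0}$ must carry $T$-weight $-\theta$, since $-\theta$ is the unique root admitting a codimension-one $B$-invariant quotient of $\g$; this pins down $\g_{-\theta}=\C E_{n,1}$ as the ``active'' missing root space. What remains is to show that any additional missing root space of $H$ plays no role in defining $\B(x,H)$ as a set, either because it vanishes automatically on $\B(x,H)$ or because its inclusion would force either higher codimension or reducibility, contradicting the hypotheses. A careful cell-by-cell analysis using Tymoczko's dimension formula, together with the irreducibility assumption, should supply the key technical ingredient.
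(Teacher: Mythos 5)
Your treatment of the two sub-cases that you do close is fine, and in places nicer than the paper's: the reduction of the ``Schubert case'' $\B(x,H)=X_{s_iw_0}$ to $i\in\{1,n-1\}$ via Theorem~\ref{pattern} and the explicit $[4231]$ occurrence is correct, and your observation that $\B(x,H(\overline{-\theta}))$ is the full preimage of $Y=\{(V_1,V_{n-1}):xV_1\subseteq V_{n-1}\}$ under the $P$-bundle $\B\to G/P$ with fiber $\fl(n-2)$ gives the divisibility $\chi=\chi(Y)\cdot(n-2)!$ directly, a cleaner version of Lemma~\ref{jtprop}'s coset count. But the proof as written has a genuine and acknowledged gap, and the plan you sketch for filling it cannot work as stated.

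Your dichotomy is ``top cell fills some $C_{s_iw_0}$'' versus ``top cell is a proper affine hyperplane inside $C_{w_0}$,'' and in the second case you aim to prove $\B(x,H)=\B(x,H(\overline{-\theta}))$. That set-theoretic equality is false in general. The relevant counter-scenario is already in the paper's Lemma~\ref{largeeigenlem}: when $x$ is semisimple with an $(n-1)$-dimensional eigenspace, $\B(x,H(\overline{-\theta}))$ is \emph{reducible}, equal to $\mathcal{C}_1\cup\mathcal{C}_2$ with $\mathcal{C}_1=X_{s_{n-1}w_0}$ and $\mathcal{C}_2=\sigma_0(X_{s_1w_0})$. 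An irreducible codimension-one $\B(x,H)$ with $H\subsetneq H(\overline{-\theta})$ is forced to equal one of these components; if it equals $\mathcal{C}_2$, its dense cell sits in $C_{w_0}$ (so you are in your ``generic case''), yet $\B(x,H)\subsetneq\B(x,H(\overline{-\theta}))$, which is not even irreducible. So no cell-by-cell argument can deliver the equality you want, because the thing you want is false; what you actually need is a statement about $H$ or about $x$, not about the varieties. The paper supplies exactly this via Lemma~\ref{jtcor}: if $\dim\B(x,H)=\binom{n}{2}-1$, then either $H=H(\overline{-\theta})$ \emph{as a Hessenberg space}, or $x$ has an eigenspace of dimension $n-1$. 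Establishing that dichotomy is the technical heart of the proof and rests on the dimension bound of Proposition~\ref{prop.maxeigenspace} (fibers of the projection to $\PP^{n-1}$, Harris's theorem on fiber dimensions), none of which appears in your proposal. The large-eigenspace branch is then handled by Lemma~\ref{largeeigenlem}, which shows $\B(x,H)$ is isomorphic to $X_{s_1w_0}$ or $X_{s_{n-1}w_0}$; note this is an \emph{isomorphism}, not an equality with a Schubert variety, so it does not reduce to your first case either.

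In short: your Schubert case and your fibration computation for $H(\overline{-\theta})$ are valid (and the latter is a genuine simplification of Lemma~\ref{jtprop}), but the missing reduction is the main content of the result, your stated plan for it (``replace $H$ by $H(\overline{-\theta})$ without changing the variety'') is unattainable, and you would instead need something like the paper's Lemma~\ref{jtcor} together with a separate treatment of the large-eigenspace case.
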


Proposition \ref{eulercharprop} will be combined with Lemma \ref{schuec} below to reduce the proof of Theorem~\ref{adjoint} to the examination of two special cases.

\begin{lemma} \label{schuec}
Let $S=\{s_1,\ldots,s_{n-1}\}$ be the set of simple reflections in $S_n$.  For $i \in [n-1]$, let $W(i)$ be the subgroup of $S_n$ generated by $S \setminus \{s_i\}$.  Then
$$
\chi(X_{s_iw_0})=n!-\left|W(i)\right|=n!-i!(n-i)!.
$$
\end{lemma}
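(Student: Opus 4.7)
The plan is to count Schubert cells in $X_{s_iw_0}$ and then use the order-reversing involution $v \mapsto vw_0$ on $W$. Since the Schubert variety decomposes as the disjoint union $X_{s_iw_0} = \bigsqcup_{v \leq_\br s_iw_0} C_v$ with each $C_v \cong \C^{\ell(v)}$, this is an affine paving, and so
$$
\chi(X_{s_iw_0}) = |\{v \in S_n : v \leq_\br s_iw_0\}|.
$$

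Next I would apply the standard fact that right multiplication by $w_0$ reverses the Bruhat order: $v \leq_\br w$ iff $vw_0 \geq_\br ww_0$. Setting $u := vw_0$ (so $v = uw_0$) gives a bijection between $\{v : v \leq_\br s_iw_0\}$ and $\{u : u \geq_\br s_i\}$. Hence
$$
\chi(X_{s_iw_0}) = |\{u \in S_n : s_i \leq_\br u\}| = n! - |\{u \in S_n : s_i \not\leq_\br u\}|,
$$
and it remains to show that the set of $u$ failing $s_i \leq_\br u$ is exactly $W(i)$.

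This last step follows directly from the Tableau Criterion (Theorem~\ref{tc}). Reading off the one-line notation $s_i = [1,\ldots,i-1,i+1,i,i+2,\ldots,n]$, one computes $I_{j,k}(s_i) = j$ for every pair $(j,k)$ with $(j,k) \neq (i,i)$, while $I_{i,i}(s_i) = i+1$. Since $I_{j,k}(u) \geq j$ holds automatically (the $j$-th smallest element of a $k$-element subset of $[n]$ is at least $j$), the only nontrivial condition for $s_i \leq_\br u$ is $I_{i,i}(u) \geq i+1$, i.e., $\max\{u(1),\ldots,u(i)\} \geq i+1$. Thus $s_i \not\leq_\br u$ holds precisely when $\{u(1),\ldots,u(i)\} = \{1,\ldots,i\}$, which is exactly the condition that $u$ lies in $W(i)$, viewed as the parabolic subgroup $S_{\{1,\ldots,i\}} \times S_{\{i+1,\ldots,n\}}$ of block-diagonal permutations. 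This subgroup has order $i!(n-i)!$, yielding $\chi(X_{s_iw_0}) = n! - i!(n-i)!$.

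There is essentially no obstacle here beyond bookkeeping: the argument rests entirely on the affine cell decomposition of Schubert varieties, the $w_0$-involution on Bruhat order, the Tableau Criterion of Section~\ref{sec.preliminaries}, and the standard identification of the maximal parabolic subgroup $W(i) \leq S_n$ as $S_i \times S_{n-i}$.
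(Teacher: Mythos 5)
Your proof is correct and follows the same overall route as the paper's: count Schubert cells, apply the order-reversing $w_0$-involution to convert to counting $\{u : s_i \leq_\br u\}$, and identify the complement with $W(i)$. The only difference is that you verify the last step (that $s_i \not\leq_\br u$ precisely when $u \in W(i)$) explicitly via the Tableau Criterion, whereas the paper treats this as a known fact about Coxeter groups (namely that $s \leq_\br u$ if and only if $s$ appears in some reduced word for $u$, i.e.\ $u$ lies outside the parabolic subgroup omitting $s$); your version is a useful type-A-specific check of the same fact.
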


\begin{proof}
It follows directly from the definitions that $W(i) \cong S_i \times S_{n-i}$ so $|W(i)| = i!(n-i)!$.  The bijection on $S_n$ sending $w$ to $ww_0$ is an anti-automorphism of the Bruhat order (see for example~\cite[Proposition 2.3.4]{Bjorner-Brenti}).  It follows that the number of Bruhat cells $C_u$ not contained in $X_{s_iw}$ is equal to the number of $u \in S_n$ such that $s_i \not\leq u$, which is the number of elements of $W(i)$. 
\end{proof}

The remainder of this section consists of the proof of Proposition \ref{eulercharprop} followed by the proof of Theorem~\ref{adjoint}.  The key step is the next result, which we will prove after stating a preliminary lemma.  

\begin{lemma}
\label{jtcor}
Assume that $x \in \g$ and that $H \subseteq \g$ is a Hessenberg space.  If $\B(x,H)$ has dimension ${{n} \choose {2}}-1$, then at least one of
\begin{itemize}
\item $H=H(\overline{-\theta})$, or
\item $x$ has an eigenspace of dimension $n-1$
\end{itemize}
must hold.
\end{lemma}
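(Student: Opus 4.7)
I would proceed by contrapositive: assuming $H \neq H(\overline{-\theta})$ and $x$ has no eigenspace of dimension $n-1$, I will show $\dim \B(x, H) \neq \binom{n}{2} - 1$. The central ingredient is the dimension identity
\[
\dim \B(x, H) = \dim(G \cdot x \cap H) + \dim Z_G(x) - \dim B,
\]
obtained by writing $\B(x, H) = Y/B$ with $Y = \{g \in G : g^{-1} \cdot x \in H\}$ the preimage of $G \cdot x \cap H$ under the surjection $G \to G \cdot x$, $g \mapsto g^{-1} \cdot x$ (whose fibers all have dimension $\dim Z_G(x)$), and with $B$ acting freely on $Y$ on the right. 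Consequently, $\dim \B(x, H) = \binom{n}{2} - 1$ is equivalent to $G \cdot x \cap H$ having codimension exactly one in the orbit $G \cdot x$.

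Tangent space analysis at a generic smooth point $y \in G \cdot x \cap H$ gives $T_y(G \cdot x \cap H) = [\g, y] \cap H$, whose codimension inside $T_y(G \cdot x) = [\g, y]$ equals the rank of the induced map $\ad(y) \bmod H : \g \to \g/H$. Thus codimension one in $G \cdot x$ forces $\rank(\ad(y) \bmod H) = 1$ at generic $y$; since every such $y$ is $G$-conjugate to $x$, replacing $x$ by such a $y$ we may assume this rank-one condition holds at $x$ itself. I would next rule out $\codim_\g H = 1$: in $\mathfrak{sl}_n$ this forces $H = H(\overline{-\theta})$ (excluded by hypothesis), while in $\mathfrak{gl}_n$ the only additional option is $\mathfrak{sl}_n$, which yields $\B(x, H) \in \{\B, \varnothing\}$, not codimension one. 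Hence $\codim_\g H \geq 2$, so $\dim H^\perp \geq 2$. Dualizing via the nondegenerate trace form, the rank-one condition is equivalent to $\dim(Z_\g(x) \cap H^\perp) = \codim_\g H - 1$, where $H^\perp$ is the direct sum of the root spaces $\g_{-\alpha}$ for roots $\alpha$ missing from $H$ together with the annihilator in $\h$ of $H \cap \h$. Because $B$-invariance of $H$ forces the missing negative roots of any $H \subsetneq \g$ to form a downward-closed subset of $\Phi^-$, the lowest root $-\theta$ is always missing, so $\g_\theta \subseteq H^\perp$. Combined with $\dim H^\perp \geq 2$, this produces at least one other root vector $E_\gamma \in H^\perp$, with $\gamma \neq \theta$, such that $[x, E_\theta]$ and $[x, E_\gamma]$ are $\C$-collinear.

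The main obstacle is the final structural step: deducing from these proportionalities (across enough elements of $H^\perp$ to witness the full rank-one condition) that $x$ must have an $(n-1)$-dimensional eigenspace. I would approach this by explicit entry-by-entry computation of the commutators $[x, E_{ij}] = x E_{ij} - E_{ij} x$, which are supported on row $i$ and column $j$ of $x$ with entries linear in the entries of $x$. The proportionalities then translate to a system of vanishing and linear equations on the entries of $x$; by using the $B$-invariance of $H$ to propagate these relations to further root vectors of $H^\perp$, and by handling the semisimple and non-semisimple Jordan-block possibilities for $x$ separately, I would deduce that $x - \lambda I$ has rank at most one for some scalar $\lambda$, which is precisely the $(n-1)$-dimensional eigenspace condition. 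The intricate combinatorial case analysis on the shape of the missing-root subset of $\Phi^-$ (a downward-closed set always containing $-\theta$), together with the requirement that the rank-one condition hold at a \emph{generic} point of $G \cdot x \cap H$ rather than merely at a particular point, constitutes the bulk of the technical work in the argument.
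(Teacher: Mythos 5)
Your strategy diverges substantially from the paper's, and while the early reductions are sound, the argument as written has a genuine gap at precisely the point where the real work happens.

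The paper's proof of this lemma is essentially a two-step bootstrap: Lemma \ref{smallcodim} shows that any proper Hessenberg space other than $H(\overline{-\theta})$ is contained in one of the two ``next largest'' spaces $H(\overline{\beta_1})$, $H(\overline{\beta_2})$, and then Proposition \ref{prop.maxeigenspace} gives a concrete upper bound $\dim \B(x,H(\overline{\beta_i})) \le \max\bigl(\binom{n}{2}-2,\ \binom{n-1}{2}-1+\max_c \mult_c(x)\bigr)$ via the fibration $\pi\colon \B(x,H(\overline{\beta_1}))\to \PP^{n-1}$, $\mathcal V_\bullet \mapsto V_1$. Monotonicity $\dim\B(x,H)\le \dim\B(x,H(\overline{\beta_i}))$ then forces $\max_c \mult_c(x)\ge n-1$. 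In contrast, you work directly with the orbit $G\cdot x$: the identity $\dim\B(x,H)=\dim(G\cdot x\cap H)+\dim Z_G(x)-\dim B$ is correct (it is a standard fiber-dimension count, and your sanity checks go through), and the reformulation of ``codimension one in the orbit'' as a rank condition on $\ad(y)\bmod H$ at a generic smooth point, followed by dualization via the trace form to $\dim(Z_\g(y)\cap H^\perp)=\codim_\g H-1$, is also sound.

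The problem is your final paragraph. You acknowledge that passing from this rank-one condition to the conclusion that $x-\lambda I$ has rank one ``constitutes the bulk of the technical work,'' but you never actually carry it out. This is not a detail you are free to defer: it is the entire content of the lemma. Nothing you have written demonstrates that the ``intricate combinatorial case analysis'' terminates in the claimed conclusion. There are also two smaller unaddressed points. First, the tangent-space argument only yields $\rank(\ad(y)\bmod H)\le 1$ at a generic smooth point; ruling out $\rank=0$ (equivalently $H^\perp\subseteq Z_\g(y)$) generically requires a separate argument. Second, and more structurally, the paper's use of Lemma \ref{smallcodim} to reduce to the two explicit spaces $H(\overline{\beta_i})$ is what makes the subsequent analysis tractable; your approach must instead grapple with an arbitrary Hessenberg space $H$ of codimension $\ge 2$, and the downward-closed root-pattern of $\Phi\setminus\Phi_H$ can be combinatorially complicated. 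Without that reduction, the case analysis you gesture at would be considerably harder than you suggest, and I am not convinced it closes. As it stands, the proposal is an interesting alternative framing rather than a proof.
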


In order to prove Lemma \ref{jtcor}, we define two more ``large" Hessenberg spaces.  Set $\beta_1:=\varepsilon_{n-1}-\varepsilon_1$ and $\beta_2:=\varepsilon_n-\varepsilon_2$.  For $i \in \{1,2\}$ we define the Hessenberg space
$$
H(\overline{\beta}_i):=\h \oplus \bigoplus_{\alpha \in \Phi \setminus\{-\theta,\beta_i\}}\g_\alpha.
$$
Assume temporarily that $G=GL_n(\C)$, $x \in \g$ and $H \subseteq \g$ is a Hessenberg space.  The map $\iota:SL_n(\C)/(B \cap SL_n(\C)) \rightarrow \B$ sending $g(B \cap SL_n(\C))$ to $gB$ is an isomorphism.  Note that $\iota$ factors through the identification of each of $G/B$ and $SL_n(\C)/(B\cap SL_n(\C))$ with the variety of full flags in $\C^n$.  Thus, if $x \in \mathfrak{sl}_n(\C)$ then $\B(x,H)=\iota(\B(x,H \cap \mathfrak{sl}_n(\C)))$, while if $x \not\in \mathfrak{sl}_n(\C)$ and $H \subseteq \mathfrak{sl}_n(\C)$ then $\B(x,H)=\varnothing$.  Therefore, the next assumption is harmless.

\begin{assumption} \label{assumph}
If $G=GL_n(\C)$ and $H \subseteq \g$ is a Hessenberg space, then $H \not\subseteq \mathfrak{sl}_n(\C)$.
\end{assumption}

We continue now under our original assumption that $G=GL_n(\C)$ or $G=SL_n(\C)$.  Given Assumption \ref{assumph}, the next lemma holds for Hessenberg spaces in either $\fg=\mathfrak{sl}_n(\C)$ or $\fg=\mathfrak{gl}_n(\C)$.  

\begin{lemma} \label{smallcodim}
Let $H \subseteq \g$ be a Hessenberg space satisfying Assumption~\ref{assumph}.
\begin{enumerate}
\item If $\dim_{\C} \g/H=1$ then $H=H(\overline{-\theta})$.
\item If $\dim_{\C} \g/H=2$ then $H=H(\overline{\beta_i})$ for some $i \in \{1,2\}$.
\item If $\dim_{\C} \g/H>2$ then $H \subset H(\overline{\beta_i})$ for some $i \in \{1,2\}$.
\end{enumerate}
\end{lemma}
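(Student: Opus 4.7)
The plan is to use the $T$-weight-space decomposition of $H$. Since $H$ is $B$-invariant, $H = (H \cap \h) \oplus \bigoplus_{\gamma \in S_H} \g_\gamma$, where $S_H := \{\gamma \in \Phi : \g_\gamma \subseteq H\}$. Setting $k := \dim_\C \h/(H\cap \h)$ and $m := |\Phi \setminus S_H|$, we have $\dim_\C \g/H = k + m$. Unpacking $[\g_\alpha, H] \subseteq H$ for $\alpha \in \Phi^+$ yields two structural constraints: (i) $S_H$ is upper-closed in the Demazure order $\leq$ on $\Phi$ (via the $[\g_\alpha, \g_\gamma]$ brackets together with Lemma~\ref{lem.partial.order}), and (ii) for each $\alpha \in \Phi^+$ with $-\alpha \in S_H$, the element $[E_\alpha, E_{-\alpha}]$ lies in $H \cap \h$.

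In type $A_{n-1}$ with $n \geq 3$, the Demazure order has $-\theta = \epsilon_n - \epsilon_1$ as its unique minimum, and I will verify that $\beta_1$ and $\beta_2$ are the only two elements of $\Phi$ covering $-\theta$, hence the only minimal elements of $\Phi \setminus \{-\theta\}$. The key consequence of (ii), together with Assumption~\ref{assumph}, is the following: whenever $S_H \supseteq \Phi^- \setminus \{-\theta\}$, constraint (ii) places $[E_\alpha, E_{-\alpha}] \in H \cap \h$ for every $\alpha \in \Phi^+ \setminus \{\theta\}$, and these elements span $\h \cap \mathfrak{sl}_n(\C)$. If $G = SL_n(\C)$ then $\h = \h \cap \mathfrak{sl}_n(\C)$ and we get $H \cap \h = \h$ immediately; if $G = GL_n(\C)$, Assumption~\ref{assumph} supplies an element of $H \cap \h$ outside $\mathfrak{sl}_n(\C)$, again forcing $H \cap \h = \h$ by a dimension count. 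Either way, $k = 0$.

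With these ingredients the three claims reduce to a case analysis on $(k,m)$. For (1), $k + m = 1$: the case $(1,0)$ has $S_H = \Phi \supseteq \Phi^- \setminus \{-\theta\}$, which the key consequence rules out by forcing $k = 0$; the remaining case $(0,1)$ has $\Phi \setminus S_H$ a one-element downset in $\leq$, necessarily $\{-\theta\}$, giving $H = H(\overline{-\theta})$. For (2), $k + m = 2$: the cases $(2,0)$ and $(1,1)$ are both ruled out by the key consequence, leaving $(0,2)$, where $\Phi \setminus S_H$ is a two-element downset and so necessarily $\{-\theta, \beta_i\}$ for some $i \in \{1,2\}$ by the combinatorial fact above, giving $H = H(\overline{\beta_i})$. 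For (3), $k + m > 2$: first $-\theta \notin S_H$, else $S_H = \Phi$ and the key consequence gives $H = \g$; next $\{\beta_1, \beta_2\} \not\subseteq S_H$, else upper-closedness of $S_H$ together with $-\theta \notin S_H$ would force $S_H = \Phi \setminus \{-\theta\}$, and the key consequence would then force $k = 0$, giving $\dim_\C \g/H = 1$, a contradiction. Hence some $\beta_i \notin S_H$, and combined with $-\theta \notin S_H$ this yields $H \subseteq H(\overline{\beta_i})$; strictness follows from $\dim_\C \g/H > 2 = \dim_\C \g/H(\overline{\beta_i})$.

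The main bookkeeping burden is distinguishing the two partial orders on $\Phi$ (the root order $\preceq$ of~(\ref{eq-root-order}) versus the Demazure order $\leq$ of~(\ref{eqn.Demazure-order})) and verifying that $\beta_1, \beta_2$ are indeed the two covers of $-\theta$; beyond that, the proof is a straightforward case split driven by the identity $\dim_\C \g/H = k + m$.
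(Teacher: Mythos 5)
Your argument is correct, and it lands on the same underlying combinatorial facts as the paper's proof: $-\theta$ is the unique minimum of the Demazure order on $\Phi_A$ (so the complement of $S_H$ is a nonempty downset as soon as $H \neq \g$), and $\beta_1,\beta_2$ are the only covers of $-\theta$. The organization, however, is genuinely different. You decompose $H = (H\cap\h)\oplus\bigoplus_{\gamma\in S_H}\g_\gamma$, track $k=\dim\h/(H\cap\h)$ and $m=|\Phi\setminus S_H|$ separately, and prove a ``key consequence'' forcing $k=0$ whenever $S_H\supseteq\Phi^-\setminus\{-\theta\}$; parts (1)--(3) then follow from a case split on $(k,m)$. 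The paper instead packages everything into Demazure-module language: $\g_{-\theta}$ generates $\g\cap\mathfrak{sl}_n(\C)$ as a $B$-module (so any proper $H$ lies inside $H(\overline{-\theta})$, giving (1)); $E_{n-1,1}$ and $E_{n2}$ generate $H(\overline{-\theta})\cap\mathfrak{sl}_n(\C)$ as a $B$-module (giving (2)); and (3) follows in one line from the fact that the poset of Hessenberg spaces satisfying Assumption~\ref{assumph} is graded by dimension, so any $H$ of codimension $>2$ sits inside some codimension-$2$ Hessenberg space. Your ``key consequence'' is doing explicitly what the paper hides inside ``generates the $B$-module,'' and your direct argument for (3) avoids invoking the gradedness of the poset, trading a slick structural fact for a few more lines of elementary case analysis. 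Both are valid; yours is more self-contained while the paper's is shorter. One small presentational point: when you say Assumption~\ref{assumph} ``supplies an element of $H\cap\h$ outside $\mathfrak{sl}_n(\C)$,'' it would be worth noting explicitly that the assumption gives an element of $H$ with nonzero trace, and its projection to $\h$ is again in $H$ because $H$ is $T$-invariant.
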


\begin{proof} If $\fg_\gamma \subset H$, then $H$ contains the Demazure module in $\fg$ generated by the root vector $E_\gamma$ so $\fg_\beta \subseteq H$ for all $\beta \geq \gamma$ by~\eqref{eqn.Demazure-order}.  Now if $\pi_{-\theta}(H) \neq 0$ then $\g_{-\theta} \subset H$ and, as $E_{n1} \in \fg_{-\theta}$ generates the Demazure module $\g \cap \mathfrak{sl}_n(\C)$, we have $H=\g$.  Therefore every proper Hessenberg subspace of $\g$ is contained in $H(\overline{-\theta})$ and (1) follows.  Moreover, if $\dim_\C \g/H=2$ then $H \subset H(\overline{-\theta})$.  For all $\gamma \in \Phi\setminus \{-\theta\}$ we have $\gamma\geq \beta_1$ or $\gamma \geq \beta_2$.  Thus $E_{n-1,1} \in \fg_{\beta_1}$ and $E_{n 2} \in \fg_{\beta_2}$ together generate the $B$-module $H(\overline{-\theta})\cap \mathfrak{sl}_n(\C)$, and assertion (2) follows.  The poset of Hessenberg spaces in $\g$ satisfying Assumption~\ref{assumph}, ordered by inclusion, is graded by dimension.   Now (3) follows from~(2). 
\end{proof}

For $x \in \g$ and $c \in \C$, set $$\mult_c(x):=\dim_\C \ker(x-cI).$$ 
Lemma \ref{jtcor} follows directly from Lemma~\ref{smallcodim} and the next result. 
Indeed, if $H\neq H(\overline{-\theta})$ then by Lemma~\ref{smallcodim} we have $H \subseteq H(\overline{\beta_i})$ for some $i \in \{1,2\}$.
Since $\dim \B(x,H)\le \dim \B(x,H(\overline{\beta_i}))$, Proposition~\ref{prop.maxeigenspace} below implies that $\max_{c \in \C}(\mult_c(x))\ge n-1$ whenever $\dim \B(x,H)= {{n} \choose {2}}-1$.
As $\B(x,H)=\varnothing$ or $\B(x,H)=\B$ whenever $x$ is scalar, Lemma \ref{jtcor} follows.

\begin{proposition} \label{prop.maxeigenspace}
For $i \in \{1,2\}$ and $x \in \g$, $\B(x,H(\overline{\beta}_i))$ has dimension at most
$$
\max\left({{n} \choose {2}}-2,{{n-1} \choose {2}}-1+\max_{c \in \C}(\mult_c(x))\right).
$$
\end{proposition}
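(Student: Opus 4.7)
My plan is to prove the bound for $i=1$ directly via a fibering argument, then deduce the case $i=2$ from it by means of an outer involution.

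For $i=1$, I would use the flag model of Section~\ref{sec.typeA-setup}: the Hessenberg vector of $H(\overline{\beta_1})$ is $(n-2, n, \ldots, n)$, so
\[
\B(x, H(\overline{\beta_1})) \cong \{\mathcal{V}_\bullet \in \fln : xV_1 \subseteq V_{n-2}\}.
\]
I would then consider the projection $\pi \colon \B(x, H(\overline{\beta_1})) \to \PP(\C^n)$, $\mathcal{V}_\bullet \mapsto V_1$, and stratify the base into the eigenline locus $E := \{L : xL \subseteq L\} = \bigcup_c \PP(\ker(x - cI))$, which has $\dim E = m - 1$ with $m := \max_{c \in \C}(\mult_c(x))$, and its complement $N$, with $\dim N \leq n - 1$. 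Over $L \in E$ the Hessenberg condition is automatic, so the fiber is the full flag variety of $\C^n/L$ of dimension $\binom{n-1}{2}$. Over $L \in N$, the condition $V_{n-2} \supseteq V_1 + xV_1$ translates (after passing to $\C^n/L \cong \C^{n-1}$) into the requirement that a codimension-$2$ subspace $W_{n-3}$ of $\C^{n-1}$ contain the fixed line $\overline{xV_1}$. A direct Grassmannian count shows the locus of such subspaces has codimension $2$ in $\Gr(n-3, n-1)$, so the fiber over $L \in N$ has dimension $\binom{n-1}{2} - 2$. Summing the contributions gives
\[
\dim \B(x, H(\overline{\beta_1})) \leq \max\left((m-1) + \binom{n-1}{2},\ (n-1) + \binom{n-1}{2} - 2 \right) = \max\left(\binom{n-1}{2} + m - 1,\ \binom{n}{2} - 2\right).
\]

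For $i=2$, I would reduce to the $i=1$ case via the involution $\tau \colon G \to G$ defined by $\tau(g) = J(g^{-1})^{\mathsf{tr}} J^{-1}$, where $J$ is the antidiagonal permutation matrix. This $\tau$ is a group automorphism preserving $B$, hence descends to an involution $\overline{\tau}$ of $\B$, and its differential acts on $\g$ by $\tau_*(E_{ij}) = -E_{n+1-j,\, n+1-i}$. In particular $\tau_*$ fixes $\g_{-\theta}$ setwise and swaps $\g_{\beta_1}$ with $\g_{\beta_2}$, so $\tau_*(H(\overline{\beta_1})) = H(\overline{\beta_2})$. Since $\tau$ is a group homomorphism, $\tau_*(g^{-1}\cdot x) = \tau(g)^{-1}\cdot\tau_*(x)$ for the adjoint action, hence $\overline{\tau}(\B(x,H)) = \B(\tau_*(x), \tau_*(H))$; in particular $\B(x, H(\overline{\beta_2})) \cong \B(\tau_*(x), H(\overline{\beta_1}))$. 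As $\tau_*(x)$ has the same eigenvalue multiplicities as $x$, the bound for $i=2$ follows from the bound for $i=1$ applied to $\tau_*(x)$.

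The main technical point will be the codimension-$2$ calculation for the non-eigenline fibers. Because the Hessenberg condition constrains the codimension-$2$ subspace $V_{n-2}$ rather than a hyperplane, the resulting constraint on $\Gr(n-3, n-1)$ has codimension $2$ rather than codimension $1$. This is precisely what produces the $\binom{n}{2} - 2$ term in the bound, matching the generic codimension of $\B(x, H(\overline{\beta_i}))$ in $\B$.
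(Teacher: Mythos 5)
Your proof is correct and uses the same core approach as the paper: reduce $i=2$ to $i=1$ by the involution $g \mapsto J(g^{-1})^{\mathsf{tr}}J^{-1}$ (the paper's $r$ with $\sigma_0 = J$), then study the projection $\pi\colon \mathcal{V}_\bullet \mapsto V_1$ to $\PP^{n-1}$ and compare fiber dimensions over $x$-invariant versus non-invariant lines, with identical fiber computations ($\binom{n-1}{2}$ and $\binom{n-1}{2}-2$ respectively). The only organizational difference is that you stratify the base $\PP^{n-1}$ into the closed eigenline locus $E$ and its open complement $N$ and bound each preimage, whereas the paper argues irreducible component by irreducible component on the source and invokes connectedness to confine a component with all $V_1$'s $x$-invariant to a single $\PP(\ker(x-cI))$; your base stratification sidesteps that connectedness step, since $\dim E = m-1$ gives the same bound directly.
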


\begin{proof}
We claim first that it suffices to consider the case $i=1$.  Let $\sigma_0 \in GL_n(\C)$ be the involution mapping $e_i$ to $e_{n+1-i}$ for all $i \in [n]$.  Define an automorphism $r:G \rightarrow G$ by
$$
r(g)=\sigma_0 (g^{-1})^{\mathsf{\mathsf{tr}}} \sigma_0.
$$
We observe that if $b \in B$ then $(b^{-1})^{\mathsf{\mathsf{tr}}}$ is lower triangular, and thus $r(b) \in B$.  Therefore the map $\overline{r}:\B \rightarrow \B$ sending $gB$ to $r(g)B$ is a well defined automorphism of $\B$.  A direct calculation shows that
$$
\overline{r}(\B(x,H(\overline{\beta_2}))=\B(\sigma_0 x^{\mathsf{\mathsf{tr}}} \sigma_0,H(\overline{\beta_1})).
$$
Since $x$ and $\sigma_0 x^{\mathsf{\mathsf{tr}}} \sigma_0$ are conjugate, we see that the dimensions of $\B(x,H(\overline{\beta_1}))$ and $\B(x, H(\overline{\beta_2}))$ are equal. Our claim follows.

Noting that $h(H(\overline{\beta}_1))=(n-2,n,\ldots,n)$, we consider the map $\pi:\B(x,H(\overline{\beta}_1)) \rightarrow \PP^{n-1}$ given by
$$
\pi({\mathcal V}_\bullet)=V_1.
$$
As $\pi$ is the restriction to $\B(x,H(\overline{\beta}_1))$ of the projection of the product of Grassmannians $\prod_{k=1}^{n-1} \grass(k,n)$ onto its first factor, $\pi$ is a regular map.  Let ${\mathcal C}$ be an irreducible component of $\B(x,H(\overline{\beta}_1))$ and let $Y \in \pi({\mathcal C})$.  According to \cite[Corollary 11.13]{Harris}, 
\begin{equation} \label{dimensionbound}
\dim {\mathcal C} \leq \dim \pi ({\mathcal C})+\dim\pi^{-1}(Y).
\end{equation}
The fiber $\pi^{-1}(Y)$ consists of all flags ${\mathcal V}_\bullet$ such that $V_1=Y$ and $Y+xY \leq V_{n-2}$. 
If $Y$ is not $x$-invariant, then $Y+xY$ is two-dimensional.
In this case, we define the (surjective) map $\zeta$ from $\pi^{-1}(Y)$ to the Grassmannian of $(n-4)$-dimensional subspaces of $\C^n/(Y+xY)$ (a projective variety) sending ${\mathcal V}_\bullet$ to $V_{n-2}/(Y+xY)$.
For every such subspace $Z^\prime=Z/(Y+xY)$, $\zeta^{-1}(Z^\prime)$ is isomorphic to the product $\fl_{n-3} \times \PP^1$.  Indeed, $\zeta^{-1}(Z^\prime)$ consists of all ${\mathcal V}_\bullet$ such that $V_1=Y$ and $V_{n-2}=Z$.  Choosing $V_2,\ldots,V_{n-3}$ is equivalent to choosing a flag in $V_{n-2}/V_1$, and choosing $V_{n-1}$ is equivalent to choosing a $1$-dimensional subspace of $\C^n/V_{n-2}$.  It follows from \cite[Theorem 11.14]{Harris} that $\pi^{-1}(Y)$ is irreducible.  We may apply \cite[Theorem 11.12]{Harris} and well-known facts about Grassmannians and flag varieties to get
\begin{equation} \label{dimbound2}
\dim \pi^{-1}(Y)
=
\dim \grass(n-4,n-2) + \dim (\fl_{n-3} \times \PP^1)
=
2(n-4)+{{n-3} \choose {2}}+1.
\end{equation}
Combining (\ref{dimensionbound}) and (\ref{dimbound2}), we get
$$
\dim {\mathcal C} \leq n-1+2(n-4)+{{n-3} \choose {2}}+1={{n} \choose {2}}-2,
$$
and the claim of the Proposition follows in this case.

We are left with the case where every $Y \in \pi({\mathcal C})$ is $x$-invariant.  We consider the map from ${\mathcal C}$ to $\C$ sending ${\mathcal V}_\bullet$ to the eigenvalue of $x$ on $V_1$.  As ${\mathcal C}$ is irreducible and therefore connected, and the set of eigenvalues of $x$ is discrete, there exists some eigenvalue $c$ of $x$ such that every $Y \in \pi({\mathcal C})$ is spanned by an element of $\ker (x-cI)$.
It follows that
\begin{equation} \label{dimbound3}
\dim \pi({\mathcal C}) \leq \mult_c(x)-1.
\end{equation}
For any $Y \in \pi({\mathcal C})$, $\pi^{-1}(Y)$ consists of all flags ${\mathcal V}_\bullet$ such that $V_1=Y$.  It follows that $\pi^{-1}(Y)$ is isomorphic with $\fl_{n-1}$.  Combining (\ref{dimensionbound}) and (\ref{dimbound3}), we get
$$
\dim {\mathcal C} \leq {{n-1} \choose {2}}+\mult_c(x)-1.
$$
This concludes the proof.
\end{proof}

With Lemma \ref{jtcor} in hand, we analyze the two cases arising from its conclusion starting with the case in which $x$ has an eigenspace of dimension $n-1$.

\begin{lemma} \label{largeeigenlem}
Assume that $x \in \g$ has an eigenspace of dimension $n-1$ and $H \subseteq \g$ is a Hessenberg space.  If $\B(x,H)$ is irreducible of codimension one in $\B$, then $\B(x,H)$ is isomorphic with \textup{(}at least\textup{)} one of $X_{s_1w_0}$ or $X_{s_{n-1}w_0}$.
\end{lemma}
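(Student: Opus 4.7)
The plan is to exploit Lemma~\ref{smallcodim}(1), which forces every proper Hessenberg space to lie inside $H(\overline{-\theta})$, and then to analyze the single Hessenberg variety $\B(x, H(\overline{-\theta}))$ under the large-eigenspace hypothesis. I claim this variety is always the union of exactly two codimension-one components, each isomorphic to $X_{s_1w_0}$ or $X_{s_{n-1}w_0}$. An irreducible codimension-one subvariety of $\B(x, H(\overline{-\theta}))$ must then coincide with one of these components.

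For the reduction, first I would write $x = cI + N$ with $\rank(N) \leq 1$. If $N = 0$ then $x$ is scalar and $\B(x,H) \in \{\varnothing, \B\}$, incompatible with codimension one; so $\rank(N)=1$. Since $\B(gxg^{-1},H)$ is the image of $\B(x,H)$ under left translation by $g$ (hence isomorphic), and $\B(\alpha x, H) = \B(x, H)$ for $\alpha \in \C^\times$ because $H$ is a linear subspace, I may conjugate and rescale to reduce to $N = E_{1,n}$ when $N$ is nilpotent and $N = E_{1,1}$ when $N$ is not nilpotent.

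The core computation is as follows. Writing $N = uv^{\mathsf{tr}}$ as an outer product, the $(n,1)$-entry of $g^{-1}\cdot x = cI + g^{-1}Ng$ factors as $(g^{-1}u)_n \cdot (v^{\mathsf{tr}}g)_1$. In the nilpotent case ($u=e_1$, $v=e_n$) the two factors are $(g^{-1})_{n,1}$ and $g_{n,1}$; under the flag correspondence $gB \leftrightarrow g\mathcal{F}_\bullet$ the former vanishes iff $F_1\subseteq V_{n-1}$ and the latter iff $V_1\subseteq F_{n-1}$, so $\B(x,H(\overline{-\theta})) = X_{s_{n-1}w_0}\cup X_{s_1w_0}$. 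In the non-nilpotent case ($u=v=e_1$) the factors become $(g^{-1})_{n,1}$ and $g_{1,1}$, giving $\B(x,H(\overline{-\theta})) = \{F_1\subseteq V_{n-1}\} \cup \{V_1\subseteq F'_{n-1}\}$, where $F'_{n-1}:=\langle e_2,\ldots,e_n\rangle$. The first component is $X_{s_{n-1}w_0}$; the second is the image of $X_{s_1w_0}$ under left translation by a representative of $w_0$ (which carries $F_{n-1}$ to $F'_{n-1}$), hence isomorphic to $X_{s_1w_0}$.

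To finish, Lemma~\ref{smallcodim}(1) gives $H\subseteq H(\overline{-\theta})$ (the alternative $H=\g$ would yield $\B(x,H)=\B$, violating codimension one), so $\B(x,H)\subseteq \B(x,H(\overline{-\theta}))$. Both components of the latter have dimension $\binom{n}{2}-1$, and the irreducibility of $\B(x,H)$ together with a dimension comparison forces equality with one of them. The main delicacy, I expect, lies in the non-nilpotent case: the component $\{V_1\subseteq F'_{n-1}\}$ is only \emph{isomorphic} to $X_{s_1w_0}$, not equal to it, which is precisely why the lemma's conclusion is stated up to isomorphism rather than equality.
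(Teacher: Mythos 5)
Your proof is correct, and in the nilpotent case it takes a genuinely different and more elementary route than the paper.  For the semisimple (non-nilpotent $N$) case, your outer-product factorization of the $(n,1)$-entry of $g^{-1}Ng$ is essentially a slicker packaging of the flag-model computation in the paper, which identifies $\B(x,H(\overline{-\theta}))$ as $\{V_1\subseteq F_{n-1}\}\cup\{e_n\in V_{n-1}\}$ using the normalization where the large eigenspace is $\C\{e_1,\ldots,e_{n-1}\}$ rather than $\C\{e_2,\ldots,e_n\}$.  For the nilpotent case, however, the paper takes a different path: it leaves $H$ arbitrary, observes that $\B(x,H)=\B(E_{1n},H)$ up to emptiness, then appeals to the fact that $\B(E_{1n},H)$ is a highest weight Hessenberg variety and hence a union of Schubert cells, and finally narrows the possibilities to $X_{s_1w_0}$ and $X_{s_{n-1}w_0}$ via Proposition~\ref{prop.highest-wt-adjoint} and Theorem~\ref{thm.not.adjoint}.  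Your approach instead reduces to $H\subseteq H(\overline{-\theta})$ via Lemma~\ref{smallcodim}(1) in both cases, computes $\B(E_{1n},H(\overline{-\theta}))=X_{s_1w_0}\cup X_{s_{n-1}w_0}$ directly from $(g^{-1})_{n,1}g_{n,1}=0$, and finishes by the dimension argument.  This is a cleaner, unified treatment that avoids the machinery of highest weight Hessenberg varieties.

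One small label error, which does not affect the validity of the argument: $\{F_1\subseteq V_{n-1}\}=\{e_1\in V_{n-1}\}$ is $X_{s_1w_0}$, not $X_{s_{n-1}w_0}$, while $\{V_1\subseteq F_{n-1}\}$ is $X_{s_{n-1}w_0}$; correspondingly the translated component $\{V_1\subseteq F'_{n-1}\}$ in the non-nilpotent case is isomorphic to $X_{s_{n-1}w_0}$, not $X_{s_1w_0}$.  Since the conclusion of the lemma is symmetric in the two Schubert varieties, this swap is harmless, but it should be corrected in a final write-up.  You should also note explicitly (as you implicitly use) that conjugation of $x$ while leaving $H$ fixed replaces $\B(x,H)$ by a left translate, so the isomorphism class is preserved, and that rescaling $x$ by $\alpha\in\C^\times$ leaves $\B(x,H)$ unchanged — both of which justify normalizing $N$ to $E_{1n}$ or $E_{11}$.
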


\begin{proof}
We observe that if $x$ has an eigenspace of dimension $n-1$, then $x$ is either semisimple or conjugate to $cI+E_{1n}$ for some scalar $c$.  We consider first the case where $x$ is semisimple.  We may assume without loss of generality that there exist distinct constants $c,d$ such that $xe_i=ce_i$ for $i \in [n-1]$, and $xe_n=de_n$.  If $H=\g$, then $\B(x,H)=\B$.  So, we assume that $H \neq \g$.  By Lemma \ref{smallcodim}(1), $H \subseteq H(\overline{-\theta})$ and so $\B(x,H) \subseteq \B(x,H(\overline{-\theta}))$.  We will show that $B(x,H(\overline{-\theta}))$ has two irreducible components, one of which is $X_{s_{n-1}w_0}$ with the other isomorphic to $X_{s_1w_0}$.  The claim of the lemma in this case then follows immediately.

Since $h(H(\overline{-\theta}))=(n-1,n,\ldots,n)$, we see that $\B(x,H(\overline{-\theta}))$ consists of all flags ${\mathcal V}_\bullet$ such that $xV_1<V_{n-1}$.  In particular, if $V_1$ is $x$-invariant, then ${\mathcal V}_\bullet \in \B(x,H(\overline{-\theta}))$.  Define
$$
{\mathcal C}_1:=\{{\mathcal V}_\bullet:V_1 \leq \C\{e_i:i \in [n-1]\}\}.
$$
As $\C\{e_i:i \in [n-1]\}=\ker (x-cI)$, we see that  ${\mathcal C}_1 \subseteq \B(x,H(\overline{-\theta}))$.  By the flag description of Schubert varieties and the fact that $s_{n-1}w_0=[n-1 \, n \, n-2 \cdots 2 \, 1]$, it follows that
$$
{\mathcal C}_1=X_{s_{n-1}w_0}.
$$
Now set
$$
{\mathcal C}_2:=\{{\mathcal V}_\bullet:e_n \in V_{n-1}\}.
$$
We will show that $\B(x,H(\overline{-\theta}))={\mathcal C}_1 \cup {\mathcal C}_2$, and that, with $\sigma_0$ the involution mapping $e_{i}$ to $e_{n+1-i}$ as above, ${\mathcal C}_2=\sigma_0 (X_{s_1w_0})$, thereby completing our examination of the case where $x$ is semisimple.  Let ${\mathcal V}_\bullet \in {\mathcal C}_2$.  If $V_1$ is $x$-invariant, then ${\mathcal V}_\bullet \in \B(x,H(\overline{-\theta}))$.  If $V_1$ is not $x$-invariant, then $V_1=\C\{e_n+y\}$ for some nonzero $y \in \C\{e_i:i \in [n-1]\}$.  Now $xV_1=\C\{de_n+cy\}$.  If $c=0$ then $d \neq 0$ and $xV_1=\C\{e_n\} \leq V_{n-1}$.  If $c \neq 0$ then 
$$V_1+xV_1=\C\left\{e_n+y,e_n+y-\frac{1}{c}\left(de_n+cy\right)\right\}=\C\{e_n+y,e_n\},$$ 
the second equality holding since $d \neq c$.  Now $xV_1<V_1+xV_1<V_{n-1}$.  In either case, ${\mathcal V}_\bullet \in \B(x,H(\overline{-\theta}))$ so $\mathcal{C}_2\subseteq \B(x,H(\overline{-\theta}))$.  

Conversely, say ${\mathcal V}_\bullet \in \B(x,H(\overline{-\theta})) \setminus {\mathcal C}_1$.  If $V_1=\C\{e_n\}$, then $e_n \in V_{n-1}$.  Otherwise, $V_1=\C\{e_n+y\}$ for some nonzero $y \in \C\{e_i:i \in [n-1]\}$.  Arguing as in the paragraph just above, we see that $e_n \in V_1+xV_1$ and therefore $e_n \in V_{n-1}$.  We conclude that ${\mathcal V}_\bullet \in {\mathcal C}_2$ and $\B(x,H(\overline{-\theta})) = \mathcal{C}_1\cup \mathcal{C}_2$ as claimed.

Now $s_1w_0=[n \, n-1 \cdots 3\,1\, 2]$.  By the flag description of Schubert varieties,  $X_{s_1w_0}=\{{\mathcal V}_\bullet: e_1 \in V_{n-1}\}$.   Now
$$
\sigma_0 (X_{s_1w_0})=\sigma_0(\{{\mathcal V}_\bullet:e_1 \in V_{n-1}\})=\{{\mathcal F}_\bullet:e_n \in F_{n-1}\}={\mathcal C}_2,
$$
the second equality following from $\sigma_0e_1=e_n$.

It remains to examine the case where $x$ is conjugate to $cI+E_{1n}$ for some $c \in \C$ (and $H \subseteq \g$ is an arbitrary Hessenberg space).  We claim that in this case, either $\B(x,H)=\varnothing$ or $\B(x,H)=B(E_{1n},H)$.  If $\B(x,H)\neq \varnothing$ then there is some $g \in G$ such that $g^{-1} \cdot x \in H$.  Write $g=b_1\dot{w} b_2$ with $b_1,b_2 \in B$ and $w \in S_n$.  We see that $\dot{w}^{-1} b_1^{-1} \cdot x \in b_2\cdot H=H$.  Now
$$
\dot{w}^{-1} b_1^{-1} \cdot x=cI+\dot{w}^{-1} b_1^{-1} \cdot E_{1n} = cI+d E_{w_1,w_n}
$$ 
for some $d \in \C^\ast$.  In particular, we see that $\pi_{\varepsilon_{w_1}-\varepsilon_{w_n}}(H) \neq 0$, hence $E_{w_1,w_n} \in H$.  It follows that $cI \in H$.  Now, for arbitrary $g \in G$, $g^{-1} \cdot x = cI+g^{-1} \cdot E_{1n}$ lies in $H$ if and only if $g^{-1} \cdot E_{1n} \in H$.   The claim follows.

Now $\B(E_{1n},H)$ is a highest weight Hessenberg variety for the adjoint representation and is therefore a union of Schubert cells.  In particular, if $B(x,H)$ is irreducible and of codimension one, then $\B(x,H)=X_{s_iw_0}$ for some $i \in [n-1]$. We observe that if $1<i<n-1$ then $s_iw_0$ maps $1$ to $n$ and $n$ to $1$ and so lies in the same coset of the stabilizer of $\theta = \varepsilon_1-\varepsilon_n$ as $w_0$.  On the other hand, neither $s_1w_0$ nor $s_{n-1}w_0$ lies in the coset and so both are longest representatives of the coset containing them.  The lemma follows from Proposition \ref{prop.highest-wt-adjoint} and Theorem~\ref{thm.not.adjoint} (and Lemma \ref{schuec}). 
\end{proof}

We record the novel geometric results from the proof of Lemma~\ref{largeeigenlem} below.

\begin{corollary} Let $x$ be a semisimple matrix such that there exists distinct constants $c, d$  with $xe_i=ce_i$ for $i\in [n-1]$ and $xe_n=de_n$. The adjoint Hessenberg variety $\B(x, H(\overline{-\theta}))$ is a union of two irreducible components, one equal to the Schubert variety $X_{s_{n-1}w_0}$ and the other isomorphic to the Schubert variety~$X_{s_1w_0}$.
\end{corollary}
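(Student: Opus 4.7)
The proof is essentially contained in the middle portion of the proof of Lemma~\ref{largeeigenlem}, so my plan is to isolate the relevant assertions and present them as a self-contained argument. The key observation is that the Hessenberg vector of $H(\overline{-\theta})$ is $(n-1,n,\ldots,n)$, so via identification \eqref{flagmodel} the variety $\B(x,H(\overline{-\theta}))$ consists of those full flags $\mathcal{V}_\bullet$ satisfying $xV_1 \subseteq V_{n-1}$. I will analyze this condition for the specific $x$ under consideration, which has the $(n-1)$-dimensional eigenspace $\C\{e_1,\ldots,e_{n-1}\} = \ker(x-cI)$ and the $1$-dimensional eigenspace $\C\{e_n\} = \ker(x-dI)$.

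I will introduce the two candidate components
\[
\mathcal{C}_1 := \{\mathcal{V}_\bullet \in \fln : V_1 \subseteq \C\{e_1,\ldots,e_{n-1}\}\}, \qquad \mathcal{C}_2 := \{\mathcal{V}_\bullet \in \fln : e_n \in V_{n-1}\}.
\]
Using the flag description of Schubert varieties from Section~\ref{sec.typeA-setup} together with the one-line expressions $s_{n-1}w_0 = [n{-}1\,\,n\,\,n{-}2\,\,\cdots\,\,2\,\,1]$ and $s_1w_0 = [n\,\,n{-}1\,\,\cdots\,\,3\,\,1\,\,2]$, I identify $\mathcal{C}_1 = X_{s_{n-1}w_0}$ and $\sigma_0(X_{s_1w_0}) = \mathcal{C}_2$, where $\sigma_0$ is the involution exchanging $e_i \leftrightarrow e_{n+1-i}$. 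Since $\sigma_0$ induces an automorphism of $\B$, this shows $\mathcal{C}_2 \cong X_{s_1w_0}$. In particular, both $\mathcal{C}_1$ and $\mathcal{C}_2$ are irreducible of dimension $\binom{n}{2}-1$.

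Next, I establish the set-theoretic identity $\B(x, H(\overline{-\theta})) = \mathcal{C}_1 \cup \mathcal{C}_2$. For the inclusion $\mathcal{C}_1 \cup \mathcal{C}_2 \subseteq \B(x,H(\overline{-\theta}))$: if $\mathcal{V}_\bullet \in \mathcal{C}_1$, then $V_1$ is $x$-invariant (contained in an eigenspace), so $xV_1 \subseteq V_1 \subseteq V_{n-1}$; if $\mathcal{V}_\bullet \in \mathcal{C}_2$ with $V_1$ not $x$-invariant, write $V_1 = \C\{e_n + y\}$ for nonzero $y \in \C\{e_1,\ldots,e_{n-1}\}$ and compute $V_1 + xV_1 = \C\{e_n+y, e_n\} \subseteq V_{n-1}$ (using $c \neq d$). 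For the reverse inclusion, suppose $\mathcal{V}_\bullet \in \B(x,H(\overline{-\theta})) \setminus \mathcal{C}_1$; then $V_1$ is either $\C\{e_n\}$ or of the form $\C\{e_n+y\}$ with $0 \neq y \in \C\{e_1,\ldots,e_{n-1}\}$, and in either case the same computation as above forces $e_n \in V_1 + xV_1 \subseteq V_{n-1}$, so $\mathcal{V}_\bullet \in \mathcal{C}_2$.

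Finally, I observe that neither $\mathcal{C}_i$ is contained in the other: a generic flag in $\mathcal{C}_1$ does not contain $e_n$ among its $(n-1)$-dimensional subspace, and a generic flag in $\mathcal{C}_2$ has $V_1$ not contained in $\C\{e_1,\ldots,e_{n-1}\}$. Since $\mathcal{C}_1$ and $\mathcal{C}_2$ are irreducible closed subvarieties of $\B(x,H(\overline{-\theta}))$ neither contained in the other, they are exactly the irreducible components. The main (and essentially only) point requiring care is the verification that $V_1 + xV_1$ contains $e_n$ in all relevant subcases; once the eigenvalue distinction $c \neq d$ is used, this is a direct linear-algebra check.
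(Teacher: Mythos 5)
Your argument is correct and follows the same route as the paper's proof of Lemma~\ref{largeeigenlem}: you introduce the same $\mathcal{C}_1$ and $\mathcal{C}_2$, identify them with Schubert varieties via the flag description, prove the set-theoretic decomposition $\B(x,H(\overline{-\theta})) = \mathcal{C}_1 \cup \mathcal{C}_2$ by the same linear-algebra computation with $V_1 + xV_1$, and use $\sigma_0$ to get the isomorphism $\mathcal{C}_2 \cong X_{s_1w_0}$. The one thing you add that the paper leaves implicit is the explicit verification that neither $\mathcal{C}_i$ contains the other, which is indeed needed to conclude that they are precisely the irreducible components.
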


The following lemma, when combined with Lemmas \ref{jtcor} and \ref{largeeigenlem}, will complete the proof of Proposition~\ref{eulercharprop}.

\begin{lemma} \label{jtprop}
For every $x \in \g$, the Euler characteristic $\chi(\B(x,H(\overline{-\theta})))$ is divisible by $(n-2)!$.
\end{lemma}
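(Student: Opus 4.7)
The plan is to exploit the simple shape of the Hessenberg space $H(\overline{-\theta})$, which has Hessenberg vector $h(H(\overline{-\theta}))=(n-1,n,n,\ldots,n)$.  By the flag-model identification~\eqref{flagmodel}, we have
$$
\B(x,H(\overline{-\theta}))\cong\{\mathcal{V}_\bullet\in\fln:xV_1\subseteq V_{n-1}\}.
$$
Only the one-dimensional and the codimension-one parts of the flag are constrained, so the intermediate subspaces $V_2,\ldots,V_{n-2}$ are free to vary in any flag interpolating between $V_1$ and $V_{n-1}$.  This strongly suggests factoring the variety as a flag bundle.

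First I would define the incidence variety
$$
Z:=\{(V_1,V_{n-1})\in\PP^{n-1}\times\mathrm{Gr}(n-1,n):V_1\subseteq V_{n-1}\}
$$
and its closed subvariety
$$
Y:=\{(V_1,V_{n-1})\in Z:xV_1\subseteq V_{n-1}\}.
$$
The natural map $\pi:\B(x,H(\overline{-\theta}))\to Y$ sending $\mathcal{V}_\bullet$ to $(V_1,V_{n-1})$ is well-defined, and the fiber over $(V_1,V_{n-1})$ is the variety of full flags in the quotient $V_{n-1}/V_1\cong\C^{n-2}$, which is isomorphic to $\fl(n-2)$.  On $Z$ there is a universal rank-$(n-2)$ quotient bundle $\mathcal{Q}:=\mathcal{V}_{n-1}/\mathcal{V}_1$, and the associated relative flag variety $\mathrm{Fl}(\mathcal{Q})\to Z$ is Zariski-locally trivial with fiber $\fl(n-2)$.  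The restriction of this bundle to $Y$ is precisely $\pi$.

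Next I would apply multiplicativity of the Euler characteristic for Zariski-locally trivial fiber bundles whose fiber admits a CW decomposition with cohomology concentrated in even degrees (which is the case for $\fl(n-2)$).  This yields
$$
\chi(\B(x,H(\overline{-\theta})))=\chi(Y)\cdot\chi(\fl(n-2))=\chi(Y)\cdot(n-2)!.
$$
Since $\chi(Y)$ is an integer, the divisibility claim follows immediately.

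The only technical point is verifying that $\pi$ is honestly a Zariski-locally trivial $\fl(n-2)$-bundle, but this is standard once one observes that it arises from the universal quotient bundle over $Z$ by restriction to the closed subvariety $Y$.  No case analysis on $x$ is needed; the argument is uniform.  I expect this to be the entire proof, with essentially no obstacle beyond recording the fiber-bundle formalism.
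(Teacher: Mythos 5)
Your argument is correct and takes a genuinely different route from the paper's. The paper invokes the affine-paving machinery of Tymoczko and Precup: after conjugating $x$ into a normal form, the Euler characteristic is identified with the cardinality of the set $\mathcal{C}(x,H(\overline{-\theta}))$ of $w\in W$ for which $C_w\cap\B(x,H(\overline{-\theta}))\neq\varnothing$, and one then observes that $W\setminus\mathcal{C}(x,H(\overline{-\theta}))$ is a union of cosets of $W_\theta\cong S_{n-2}$, whence $|\mathcal{C}(x,H(\overline{-\theta}))|$ is divisible by $(n-2)!$. Your proof instead exhibits $\B(x,H(\overline{-\theta}))$ as the restriction to the closed subvariety $Y\subseteq Z$ of the relative flag bundle $\mathrm{Fl}(\mathcal{Q})\to Z$, which is Zariski-locally trivial with fiber the full flag variety of $\C^{n-2}$, and then uses multiplicativity of $\chi$. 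This is self-contained and makes the divisibility geometrically transparent, with no case analysis on $x$ and no appeal to paving theorems. One small remark: the hypothesis you impose on the fiber (CW structure, cohomology in even degrees) is unnecessary---multiplicativity of $\chi$ for Zariski-locally trivial fibrations of complex varieties follows unconditionally from the additivity of the compactly supported Euler characteristic over a finite locally closed trivializing cover of the base, together with $\chi_c=\chi$ for complex algebraic varieties. The paper's approach is less elementary here, but it produces the explicit description $\chi(\B(x,H(\overline{-\theta})))=n!-(n-2)!\,\bigl|\{(j,k):j<k,\ c_{jk}\neq 0\}\bigr|$ that is reused directly in the proof of Theorem~\ref{adjoint} to rule out the cases $(n,i)\in\{(8,3),(8,5)\}$; your fiber-bundle proof establishes the divisibility but not that refinement.
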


\begin{proof}
We assume without loss of generality that $x=x_s+x_n$ with $x_s$ diagonal and $x_n$ an upper triangular, nilpotent matrix satisfying the assumptions of~\cite[Corollary 4.9]{Precup2013}.  Let ${\mathcal C}(x,H(\overline{-\theta}))$ be the set of all $w \in W$ such that the Schubert cell $C_w$ has nonempty intersection with $\B(x,H(\overline{-\theta}))$.  By \cite[Theorem 5.4]{Precup2013} and its proof, the affine spaces $\B(x,H(\overline{-\theta})) \cap C_w$ with $w \in {\mathcal C}(x,H)$ determine an affine paving of $\B(x,H(\overline{-\theta}))$.  So,
$$
\chi(\B(x,H(\overline{-\theta}))) =\dim_\C H^\ast(\B(x,H(\overline{-\theta})))=|{\mathcal C}(x,H(\overline{-\theta}))|.
$$
Now by \cite[Proposition 3.7]{Precup2013}, $w \in {\mathcal C}(x,H(\overline{-\theta}))$ if and only if $Ad(\dot{w}^{-1})(x_n) \in H(\overline{-\theta})$. We now write
$$
x_n=\sum_{j<k} c_{jk} E_{j,k}
$$
for some $c_{jk}\in \C$.
It follows that ${\mathcal C}(x,H(\overline{-\theta}))$ fails to contain exactly those $w \in W$ such that $w(-\theta)=\varepsilon_j-\varepsilon_{k}$ for pairs $j<k$ satisfying $c_{jk} \neq 0$.  In particular, $W \setminus {\mathcal C}(x,H(\overline{-\theta}))$ is a union of cosets of the stabilizer of $-\theta$ in $W$.  As this stabilizer is isomorphic with $S_{n-2}$, the lemma follows.
\end{proof}

We are now ready to prove Theorem~\ref{adjoint}.

\begin{proof}[Proof of Theorem~\ref{adjoint}.]
Observe that $X_{s_iw_0}$ is irreducible.  So, if there is some adjoint Hessenberg variety $\B(x,H) \subseteq \B$ isomorphic with $X_{s_iw_0}$, then $i!(n-i)!$ is divisible by $(n-2)!$ by Proposition~\ref{eulercharprop} and Lemma~\ref{schuec}.  However, if $n \geq 9$ and $3 \leq i \leq n-3$, then $0<i!(n-i)!<(n-2)!$.  Moreover, inspection shows that if $6 \leq n \leq 8$ and $3 \leq i \leq n-3$, then $(n-2)!$ does not divide $i!(n-i)!$ unless $(n,i) \in \{(8,3),(8,5)\}$.

We assume now that $n=8$ and $i \in \{3,5\}$.  In this case, we have $\chi(X_{s_iw_0}) = 8! - 3!5!$.  If $\B(x,H)$ is irreducible of codimension one in $\B$ then we have by Lemmas~\ref{jtcor} and~\ref{largeeigenlem} that $H=H(\overline{-\theta})$.  Let $x=x_s+x_n$ be the decomposition of $x$ as in the proof Lemma~\ref{largeeigenlem}.  Arguing as in that proof, we get that 
\[
\chi(\B(x,H(\overline{-\theta}))) = |\mathcal{C}(x, H(\overline{-\theta}))| = 8! - 6! \,|\{(j<k) : c_{jk}\neq 0\}|.
\]
It follows immediately that if $\chi(\B(x,H(\overline{-\theta}))) = 8!-3!5!$, then $x_n=E_{jk}$ for some $1\leq j<k\leq n$.  Finally, one can use the formulas given in~\cite[Corollary 5.5]{Precup2013} to check that there is no Hessenberg variety $\B(x_s+E_{jk}, H(\overline{-\theta}))$ in the flag variety $\B=SL_{8}(\C)/B$ with Betti numbers equal to those of the Schubert varieties $X_{s_3w_0}$ and $X_{s_5w_0}$.  For the sake of brevity, we omit these computations.
\end{proof}

\section{Type C adjoint Hessenberg varieties}\label{sec.typeC}

Recall from the introduction that the type C flag variety can be identified as the fixed-point set of a type A flag variety under a certain automorphism $\sigma$. Similarly, the Type A and Type C Schubert varieties are closely connected as each type C Schubert variety is the variety of $\sigma$-fixed points of a type A Schubert variety (see~\cite[Chapter 6]{Lakshmibai-Raghavan}).  The first main result of this section is that the same is true of type C Hessenberg varieties. Namely Theorem~\ref{thm: C to A} below says that every type C adjoint Hessenberg variety is the variety of $\sigma$-fixed points of a type A Hessenberg variety and implies Theorem~\ref{foldhess}.  With this groundwork in place, we establish the type C pattern avoidance result stated in Theorem~\ref{patternc.intro} (see Theorem~\ref{patternc} below).  We remark that many of our proofs in this section would be considerably easier and shorter were we to assume that every Hessenberg space $H$ contains the Borel subalgebra $\b$.

We begin by fixing the notation needed to define the automorphism $\sigma$.
Let $E$ be the $2n\times 2n$ block matrix 
\[
E = \begin{bmatrix} 0& J \\ -J & 0 \end{bmatrix}
\]
where $J$ is the $n\times n$ matrix with $1$'s on the anti-diagonal, and $0$'s elsewhere.  
We follow \cite[Chapter 6]{Lakshmibai-Raghavan} and identify $Sp_{2n}(\C)$ with the fixed point set of the involution $\sigma: SL_{2n}(\C) \to SL_{2n}(\C)$ defined by $\sigma(A) = E (A^{\mathsf{\mathsf{tr}}})^{-1}E^{-1}$. 
Explicitly, consider the embedding $\phi:Sp_{2n}(\C) \hookrightarrow SL_{2n}(\C)$ whose image stabilizes the alternating form $\langle -,- \rangle$ defined by 
$$
\langle e_i,e_j \rangle=\left\{ \begin{array}{cc} 1 & j=2n+1-i, \\ 0 & \mbox{otherwise}. \end{array} \right.
$$ 
(Here $e_1,\ldots,e_{2n}$ is the standard basis of $\C^{2n}$.)  
Then $G_C:=\phi(Sp_{2n}(\C))$ is the group of $\sigma$-fixed points in $SL_{2n}(\C)$. Throughout this section we identify $Sp_{2n}(\C)$ with $G_C$.

The maximal torus $T$ in $SL_{2n}(\C)$ consisting of diagonal matrices and fixed Borel subgroup $B$ in $SL_{2n}(\C)$ consisting of upper triangular matrices are stable under $\sigma$, and $T^\sigma$ (respectively, $B_C:=B^\sigma$) is a maximal torus (respectively, Borel subgroup) in $G_C$.  
For $i \in [2n]$, set
$$i'=2n+1-i.
$$
Let $W$ be the Weyl group of $Sp_{2n}(\C)$.  The embedding $\phi$ induces an embedding $\phi^*: W \to S_{2n}$ with image $W_C:=\phi^*(W)$ consisting of those $w\in S_{2n}$ satisfying $w(i)' = w(i')$ for all $i \in [2n]$.  We call such $w$ \textbf{signed permutations}.

We will write $\sigma$ for the differential $d\sigma$ which is the involution of the Lie algebra given by
\[
\sigma : \mathfrak{sl}_{2n}(\C) \to \mathfrak{sl}_{2n}(\C), \; \sigma(x) = Ex^{\mathsf{\mathsf{tr}}} E.
\]
We identify $\mathfrak{sp}_{2n}(\C)$ with $\fg_C:=\mathfrak{sl}_{2n}(\C)^\sigma$.  We observe that $\h^\sigma$ is the Lie algebra of $T^\sigma$, and 
\begin{eqnarray}\label{eqn.CSA}
\h^\sigma = \{\textup{diag}(d_1, \ldots, d_{2n}) \in \mathfrak{sl}_{2n}(\C) : d_i = -d_{i'} \}.
\end{eqnarray}

The involution $\sigma$ also induces an involution of $\h^*$ defined by
\[
{\sigma}: \h^* \to  \h^*,\;\; \sigma(\epsilon_i) = -\epsilon_{i'}.
\]
We describe now a surjective map from the type A root system $\Phi_A$ to the type C root system $\Phi_C$ known as the \textbf{folding map}. 
Set $\bar{\epsilon}_i:= \epsilon_i-\epsilon_{i'}$ and note that, by definition, $\bar{\epsilon}_i = -\bar{\epsilon}_{i'}$. The folding map is now defined to be
\begin{eqnarray}\label{eqn.folding}
\varphi: \Phi_A \to \Phi_C,\;\; \varphi(\epsilon_i-\epsilon_j)= \frac{1}{2}(\bar{\epsilon_i} - \bar{\epsilon_j}).  
\end{eqnarray}

Let $H_C \subseteq \fg_C$ be a type C Hessenberg space. 
Our first goal is to construct a type A Hessenberg space $H$ whose $\sigma$-fixed points are the elements of $H_C$.
Define $\Phi_{H_C}:= \{\gamma\in \Phi_C : \fg_\gamma \subseteq H_C\}$ and  set
\begin{eqnarray}\label{eqn.Hroots}
\Phi_H: = \{\epsilon_i-\epsilon_j \in \Phi_A : \varphi(\epsilon_i-\epsilon_j) \in \Phi_{H_C}\} \subseteq \Phi_A.
\end{eqnarray}
With $H \cap \h$ to be described below, we define $H \subseteq \mathfrak{sl}_{2n}(\C)$ to be the subspace such that 
\begin{eqnarray}\label{eqn.Hess.C.to.A}
H = (H\cap \h) \oplus \bigoplus_{\epsilon_i-\epsilon_j\in \Phi_H} \C\{E_{ij}\}.
\end{eqnarray}
Given $i,j \in [2n]$, we set
$$
h_{ij}:=[E_{ij}, E_{ji}] = E_{ii}-E_{jj},
$$
and define
\[
H\cap \h:= \C\{h, h_{ij}: h\in H_C\cap \h^\sigma \, \textup{ and } \, i,j\in[2n] \, \textup{ such that } \, \epsilon_i - \epsilon_j,\epsilon_j -\epsilon_i \in \Phi_H \}.
\]

We observe that the action of $\sigma$ on $SL_{2n}(\C)$ induces an automorphism of the type A flag variety $\B_A:=SL_{2n}(\C)/B$, which will also be denoted by $\sigma$.  The map $\phi^\prime$ from $\B_C:=G_C/B_C$ to $\B_A$ sending $gB_C$ to $gB$ is a well-defined embedding, and $(\B_A)^\sigma=\phi^\prime(\B_C)$ (see, for example, \cite[Proposition 6.1.1.1]{Lakshmibai-Raghavan}).

We can now state the main theorem of this section.

\begin{theorem}\label{thm: C to A}
Given a type $C$ Hessenberg space $H_C \subseteq \fg_C$, let $H$ be the subspace of $\mathfrak{sl}_{2n}(\C)$ defined as in~\eqref{eqn.Hess.C.to.A} above. 
	\begin{enumerate}
	\item The subspace $H$ is a type A Hessenberg space such that $H^\sigma = H_C$.
	\item Let $x\in \fg_C$. The image under $\phi^\prime$ of the type C Hessenberg variety $\B_C(x,H_C)$ is $\B_A(x, H)^\sigma$.
	\end{enumerate}
\end{theorem}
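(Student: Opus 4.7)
The plan is to establish part (1) first---that $H$ is a type A Hessenberg space with $H^\sigma = H_C$---and then to deduce part (2) from it using the identification $(\B_A)^\sigma = \phi'(\B_C)$ recalled before the theorem statement.

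For part (1), I would begin by establishing the $\sigma$-stability of $H$.  Since $\sigma$ acts on $\Phi_A$ by $\sigma(\epsilon_i - \epsilon_j) = \epsilon_{j'} - \epsilon_{i'}$ and the folding map $\varphi$ factors through the resulting orbits, the set $\Phi_H = \varphi^{-1}(\Phi_{H_C})$ is manifestly $\sigma$-stable.  A direct computation shows $\sigma(h_{ij}) = -h_{i'j'}$, and $\sigma$-stability of $\Phi_H$ ensures that $h_{i'j'}$ is a defining generator of $H\cap\h$ whenever $h_{ij}$ is, so $H\cap\h$ is $\sigma$-stable as well.  To verify $H^\sigma = H_C$ I would decompose $H$ into its Cartan and root-space parts.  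The root-space part $\bigoplus_{\beta \in \Phi_H} \fg_\beta$ splits into $\sigma$-orbits; singleton orbits (necessarily $\beta = \bar\epsilon_i$ with $i \leq n$) contribute the long type C root spaces $\fg^C_{\bar\epsilon_i}$, while each two-element orbit contributes the one-dimensional $\sigma$-fixed subspace $\C\{E_{ij} + \sigma(E_{ij})\}$, which is the short type C root space $\fg^C_{\varphi(\epsilon_i - \epsilon_j)}$.  So the root-space part of $H^\sigma$ equals $\bigoplus_{\gamma \in \Phi_{H_C}} \fg^C_\gamma$, matching the root-space part of $H_C$.  For the Cartan part, $\sigma$-fixed combinations of the $h_{ij}$'s are spanned by $h_{ij} - h_{i'j'}$ (or by $2h_{ii'}$ when $j = i'$), and a short computation of $[\fg^C_\gamma, \fg^C_{-\gamma}]$ identifies these as scalar multiples of the type C coroots $h^C_\gamma$ for $\gamma = \varphi(\epsilon_i - \epsilon_j)$, which lie in $H_C \cap \h^\sigma$ because $\pm\gamma \in \Phi_{H_C}$ forces $h^C_\gamma \in H_C$ by $\fb_C$-invariance.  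Hence $(H \cap \h)^\sigma = H_C \cap \h^\sigma$ and $H^\sigma = H_C$.

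The most delicate step of part (1) is the Hessenberg condition $[\fb_A, H] \subseteq H$.  The bracket $[\h, H] \subseteq H$ is immediate from the $\h$-weight decomposition of $H$.  For $\alpha \in \Phi_A^+$ and $\beta \in \Phi_H$ with $\alpha + \beta \in \Phi_A$, linearity of $\varphi$ yields $\varphi(\alpha + \beta) = \varphi(\alpha) + \varphi(\beta) \in \Phi_C$; since $\varphi(\alpha) \in \Phi_C^+$ and $\varphi(\beta) \in \Phi_{H_C}$, the $\fb_C$-invariance of $H_C$ forces $\varphi(\alpha + \beta) \in \Phi_{H_C}$, so $\fg_{\alpha+\beta} \subseteq H$.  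For $[\fg_\alpha, H \cap \h]$ one must show $\alpha(h) \neq 0 \Rightarrow \alpha \in \Phi_H$ for every generator $h$ of $H \cap \h$.  When $h \in H_C \cap \h^\sigma$, the identity $\alpha(h) = \varphi(\alpha)(h)$ valid on $\h^\sigma$ reduces the claim to $\fb_C$-invariance of $H_C$.  When $h = h_{ij}$ with $\pm(\epsilon_i - \epsilon_j) \in \Phi_H$, a case analysis according to which indices of $\alpha$ meet $\{i, j, i', j'\}$ is required: whenever the pairing $\varphi(\alpha)(h^C_\gamma) = \alpha(h_{ij}) - \alpha(h_{i'j'})$ with the coroot $h^C_\gamma \in H_C$ is nonzero, $\fb_C$-invariance gives $\varphi(\alpha) \in \Phi_{H_C}$; in the remaining (``coroot-orthogonal'') cases one expresses $\varphi(\alpha)$ as $\pm\gamma + \delta$ for a suitable $\delta \in \Phi_C^+$ and again applies $\fb_C$-invariance.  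This case analysis is the main technical obstacle, but each configuration admits one of the two mechanisms above.

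Part (2) then follows quickly from part (1).  The inclusion $\phi'(\B_C(x, H_C)) \subseteq \B_A(x, H)^\sigma$ is immediate: if $gB_C \in \B_C(x, H_C)$ then $g \in G_C$ forces $\sigma(gB) = gB$, and $\Ad(g^{-1})(x) \in H_C \subseteq H$ yields $gB \in \B_A(x, H)$.  For the reverse inclusion, given $gB \in \B_A(x, H)^\sigma$, the identification $(\B_A)^\sigma = \phi'(\B_C)$ provides some $h \in G_C$ with $gB = hB$.  The $B_A$-invariance of $H$ furnished by part (1) gives $\Ad(h^{-1})(x) \in H$, and since $h \in G_C$ and $x \in \fg_C$, this element lies in $\fg_C \cap H = H^\sigma = H_C$ by part (1).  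Thus $hB_C \in \B_C(x, H_C)$ with $\phi'(hB_C) = gB$.
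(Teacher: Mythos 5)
Your overall architecture is sound, and your proof of part (2) is correct and arguably cleaner than the paper's: you work directly in the flag variety (using $B_A$-invariance of $H$ and the identity $\fg_C\cap H = H^\sigma$), whereas the paper passes through $B_C$-invariant subvarieties of the group $G_C$ and then applies the morphism $G_C\to\B_C$. Your deduction of the $\sigma$-stability of $H$ and of $H^\sigma = H_C$ via $\sigma$-orbits on $\Phi_H$ and on the $h_{ij}$'s also matches the paper's Lemma~\ref{lemma: H sigma stable} and the last paragraph of the proof of Theorem~\ref{thm: C to A}(1) in substance.

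There is a genuine gap in your verification of $[\fb_A,H]\subseteq H$. You treat the root-vector brackets only ``for $\alpha\in\Phi_A^+$ and $\beta\in\Phi_H$ with $\alpha+\beta\in\Phi_A$,'' and you treat the Cartan generators of $H\cap\h$ separately, but you never address the bracket $[\fg_\alpha,\fg_{-\alpha}]$ when $\alpha\in\Phi_A^+$ and $-\alpha\in\Phi_H$, which lands in $\h$ and produces $h_{k\ell}$ for $\alpha=\epsilon_k-\epsilon_\ell$, $k<\ell$. One must show this $h_{k\ell}$ actually lies in $H\cap\h$, i.e., that $\epsilon_k-\epsilon_\ell\in\Phi_H$ as well. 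This is the first sub-case of the paper's Case~1, handled via Lemma~\ref{lemma.upper.order}: since $\Phi_H$ is an upper order ideal and $\epsilon_\ell-\epsilon_k\le \epsilon_k-\epsilon_\ell$, one gets $\epsilon_k-\epsilon_\ell\in\Phi_H$ and hence $h_{k\ell}\in H\cap\h$ by construction. Your $\fb_C$-invariance mechanism can also fill it (from $E_{-\gamma}\in H_C$, $E_\gamma\in\fb_C$ one gets $h^C_\gamma\in H_C$ and then $E_\gamma\in H_C$), but you have to say it, and your write-up never returns to this case.

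Beyond that gap, the remaining difference is methodological and worth noting: the paper isolates the upper-order-ideal property of $\Phi_H$ (Lemma~\ref{lemma.upper.order}) and uses it directly and uniformly in Cases~1 and~2, which collapses your sketched ``case analysis according to which indices of $\alpha$ meet $\{i,j,i',j'\}$'' into the one-line observation that $\alpha(h_{ij})\neq 0$ forces $\epsilon_j-\epsilon_i\le\alpha$. Your proposed route --- pairing $\varphi(\alpha)$ against the type~C coroot and, when that vanishes, writing $\varphi(\alpha)=\pm\gamma+\delta$ for $\delta\in\Phi_C^+$ --- is plausible and appears to work (I checked several configurations), but as written it is only a sketch, and the sign and positivity bookkeeping you defer to is exactly where this kind of argument tends to break. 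If you keep your route, you should carry out the case analysis and explicitly verify that a suitable $\delta\in\Phi_C^+$ exists in the ``coroot-orthogonal'' cases; alternatively, prove the analogue of Lemma~\ref{lemma.upper.order} up front and use it as the paper does, which is shorter and less error-prone.
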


We prove first that in Theorem~\ref{thm: C to A}, (1) implies (2).
The proof of Theorem~\ref{thm: C to A}\textup{(}1\textup{)} is delayed until after Lemma~\ref{lemma.upper.order}.
 
\begin{proof}[Proof of Theorem~\ref{thm: C to A}\textup{(}2\textup{)}.] 
We wish to show that $\phi^\prime(\B_C(x,H_C)) = \B_A(x,H)^\sigma$ as varieties.
The set $\mathcal{G}_C(x,H_C):=\{g\in G_C: g^{-1}\cdot x \in H_C\}$ is a $B_C$-invariant subvariety of $G_C$ whose image under the morphism $\mu:G_C\to \B_C$ is $\B_C(x,H_C)$.
Let $\mathcal{G}_A(x,H):=\{g\in SL_{2n}(\C): g^{-1}\cdot x \in H\}$.
Since $G_C=SL_{2n}(\C)^\sigma$, we have that $\mathcal{G}_A(x,H)^\sigma=\mathcal{G}_A(x,H)\cap G_C$ is a subvariety of $G_C$.

Let us show that $\mathcal{G}_A(x,H)^\sigma=\mathcal{G}_C(x,H_C)$, as subvarieties of $G_C$.
By (1) of Theorem~\ref{thm: C to A} we have
	$$
	\mathcal{G}_C(x,H_C)=\{g\in G_C: g^{-1}\cdot x \in H,\ g^{-1}\cdot x\in \mathfrak{sl}_{2n}(\C)^\sigma \}.
	$$
Given $g\in G_C$, since $x\in \fg_C$, we have $g^{-1}\cdot x \in \fg_C$ and thus $g^{-1}\cdot x$ is $\sigma$-stable.
It follows that the constraints imposed by $g^{-1}\cdot x\in \mathfrak{sl}_{2n}(\C)^\sigma$ are redundant, so indeed $\mathcal{G}_C(x,H_C)=\mathcal{G}_A(x,H)^\sigma$.
In particular, 
	$$
	\phi'(\B_C(x,H_C))=
	\phi'(\mu(\mathcal{G}_C(x,H_C)))=
	\phi'(\mu(\mathcal{G}_A(x,H)^\sigma))
	$$
as subvarieties of $\B$.
Finally, note that by definition of the maps,
	\begin{align*}
	\phi'(\mu(\mathcal{G}_A(x,H)^\sigma))	&=
	\phi'(\{gB_C\in\B_C	:	\sigma(g)=g,\ g^{-1}\cdot x\in H\})	\\&=
	\{gB\in\B	:	\sigma(g)=g,\ g^{-1}\cdot x\in H\}	\\&=
	\B_A(x,H)^\sigma.
	\end{align*}
This concludes the proof.
\end{proof}

Now we develop the tools needed to prove Theorem~\ref{thm: C to A}(1).  Recall the partial order $\leq$ on the root system $\Phi$ defined in Section~\ref{sec.demazure} above, and the concrete description of that order given in Lemma~\ref{lem.partial.order}.
Our next result tells us that the folding map interacts nicely with $\sigma$, is compatible with $\le$, and is well-behaved with respect to the $W_C$-action.  A proof can be found in \cite[Chapter 6.1]{Lakshmibai-Raghavan}.

\begin{lemma}\label{lemma.roots} Let $\varphi: \Phi_A \to \Phi_C$ be the folding map defined as in~\eqref{eqn.folding}.  This map satisfies each of the following conditions.
\begin{enumerate}
\item $\varphi(\Phi_A^+) = \Phi_C^+$. 
\item Given $\gamma\in \Phi_C$, $\varphi^{-1}(\gamma)$ is precisely the $\sigma$-orbit of any $\gamma'\in \Phi_A$ such that $\varphi(\gamma') = \gamma$.
\item The map $\varphi$ is compatible with the partial ordering $\leq$ on $\Phi_A$ and $\Phi_C$, that is, given $\gamma_1, \gamma_2\in \Phi_A$ we have $\gamma_1 \leq \gamma_2$ implies $\varphi(\gamma_1) \leq \varphi(\gamma_2)$.  
\item $\varphi$ is equivariant with respect to the canonical action of $W_C$ on $\Phi_A$ and $\Phi_C$.
\end{enumerate}
\end{lemma}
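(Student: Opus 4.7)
The plan is to verify each of the four items by direct computation, using the explicit formula for $\varphi$ together with the observations that $\sigma$ acts on $\h^*_A$ by $\sigma(\epsilon_i-\epsilon_j)=\epsilon_{j'}-\epsilon_{i'}$ and that $\varphi(\epsilon_i-\epsilon_j)=\tfrac12\bigl((\epsilon_i+\epsilon_{j'})-(\epsilon_{i'}+\epsilon_j)\bigr)$, which makes $\sigma$-invariance transparent. Throughout I use the explicit descriptions of $\Phi_A^+$, $\Phi_C^+$, and the partial order $\leq$ from Lemma~\ref{lem.partial.order}.

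For (1), I evaluate $\varphi(\epsilon_i-\epsilon_j)$ for each positive type A root (so $i<j$), splitting into the cases $i<j\le n$, $i\le n<j$ with $j\ne i'$, and $j=i'$. In each case the image is a positive type C root of the form $\bar\epsilon_k-\bar\epsilon_\ell$, $\bar\epsilon_k+\bar\epsilon_\ell$, or $2\bar\epsilon_k$ with $k<\ell\le n$, and surjectivity follows because every positive type C root arises this way. For (2), I first check $\varphi\circ\sigma=\varphi$, so each $\sigma$-orbit lies in a single fiber. Then I show the fibers are exactly the $\sigma$-orbits by matching coefficients in the standard basis: if $\varphi(\epsilon_i-\epsilon_j)=\varphi(\epsilon_k-\epsilon_\ell)$ then the expansions of $\bar\epsilon_i-\bar\epsilon_j$ and $\bar\epsilon_k-\bar\epsilon_\ell$ agree, forcing $\{i,i',j,j'\}=\{k,k',\ell,\ell'\}$ with compatible signs; this yields only the two solutions $(k,\ell)=(i,j)$ and $(k,\ell)=(j',i')$, which comprise the $\sigma$-orbit. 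The degenerate case $j=i'$ (a long type C root) gives a one-element fiber, which is also its own $\sigma$-orbit.

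For (3), I apply Lemma~\ref{lem.partial.order}: $\gamma_1\le\gamma_2$ in $\Phi_A$ means $\gamma_2=\gamma_1+\sum_{k=1}^{r}n_k\gamma_{i_k}$ with $\gamma_{i_k}\in\Phi_A^+$, each $n_k\in\Z_{>0}$, and $\gamma_1+\sum_{k=1}^{m}n_k\gamma_{i_k}\in\Phi_A$ for every $1\le m\le r$. Since $\varphi$ is $\C$-linear and sends $\Phi_A^+$ into $\Phi_C^+$ by (1), applying $\varphi$ produces the witnessing expression $\varphi(\gamma_2)=\varphi(\gamma_1)+\sum_{k=1}^{r}n_k\varphi(\gamma_{i_k})$ with all partial sums in $\Phi_C$ (being $\varphi$-images of elements of $\Phi_A$), so another application of Lemma~\ref{lem.partial.order} in type C gives $\varphi(\gamma_1)\le\varphi(\gamma_2)$. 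For (4), the image $W_C=\phi^\ast(W)$ is characterized by $w(i)'=w(i')$, so $w\cdot\bar\epsilon_i=\epsilon_{w(i)}-\epsilon_{w(i')}=\bar\epsilon_{w(i)}$; hence $w\cdot\varphi(\epsilon_i-\epsilon_j)=\tfrac12(\bar\epsilon_{w(i)}-\bar\epsilon_{w(j)})=\varphi(w\cdot(\epsilon_i-\epsilon_j))$.

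The main obstacle is the case-by-case bookkeeping needed for (1) and (2), since the form of $\varphi(\epsilon_i-\epsilon_j)$ depends on whether each of $i,j$ lies in $[n]$ or in $[n+1,2n]$, and the different cases distinguish short from long type C roots; each case is mechanical, however, and presents no conceptual difficulty. The remaining items (3) and (4) then reduce to a one-line linearity argument and a one-line substitution, respectively.
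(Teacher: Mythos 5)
The paper does not actually prove Lemma~\ref{lemma.roots}: it states the result and cites \cite[Chapter 6.1]{Lakshmibai-Raghavan} for a proof. So there is no proof in the paper to compare against. Your proposal supplies a self-contained direct verification, and it is essentially correct.

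A few remarks on the details. In item (1), your case split (``$i<j\le n$'', ``$i\le n<j$ with $j\ne i'$'', ``$j=i'$'') omits the case $n<i<j$; here $\varphi(\epsilon_i-\epsilon_j)=\tfrac12(\bar\epsilon_i-\bar\epsilon_j)=\tfrac12(\bar\epsilon_{j'}-\bar\epsilon_{i'})$, which is a positive short root since $j'<i'$, so the conclusion is unchanged. For item (3), your argument relies on the ``if'' direction of Proctor's criterion (Lemma~\ref{lem.partial.order}): you apply $\varphi$ to a witnessing chain in $\Phi_A$ and observe that since $\varphi$ is linear and sends $\Phi_A^+$ into $\Phi_C^+$ and $\Phi_A$ into $\Phi_C$, the image is a witnessing chain in $\Phi_C$. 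This is valid; the only thing worth noting is that the ``if'' direction is the elementary one (each step is realized by applying a positive root vector to a Demazure generator, nonzero by the unbroken $\alpha$-string property), whereas the ``only if'' direction is the substantive part of Proctor's result and is not needed here. An alternative, even more concrete route to (3) would be to use the explicit description in~\eqref{eqn.partial.order} and the corresponding type~C description directly, avoiding Lemma~\ref{lem.partial.order} entirely; your approach is fine as written. Items (2) and (4) are correct as you argue.
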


For use below, we recall that there is a simple description of the partial order $\leq$ on the root system $\Phi_A$, given by
\begin{eqnarray}\label{eqn.partial.order}
\epsilon_i - \epsilon_j \leq \epsilon_k - \epsilon_\ell \Leftrightarrow k\leq i \textup{ and } \ell \geq j.
\end{eqnarray}

Consider the surjective linear map 
\[
\bar{\sigma}: \mathfrak{sl}_{2n}(\C)\to \fg_C,\;\; \bar{\sigma}(x) = x+\sigma(x).
\] 
Note that $\bar{\sigma}$ is not a Lie algebra homomorphism. 
However, the next lemma tells us that $\bar{\sigma}$ maps the root spaces of $\mathfrak{sl}_{2n}(\C)$ onto those of $\mathfrak{sp}_{2n}(\C)$.

\begin{lemma}\label{lemma.C-to-A-action} Let $h\in \h^\sigma$.  For all $1\leq k,\ell\leq 2n$ with $k\neq \ell$, we have $\bar{\sigma}(E_{k\ell}) \in \fg_{ \varphi(\epsilon_k - \epsilon_\ell)}$ and $\varphi(\epsilon_k - \epsilon_\ell)(h) =  (\epsilon_k - \epsilon_\ell)(h)$.
\end{lemma}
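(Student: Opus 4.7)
The plan is to proceed by direct matrix calculation. First I would expand $E$ in terms of elementary matrices as $E = \sum_{i=1}^{2n} s(i)\, E_{i,i'}$, where $s(i) = 1$ if $i \leq n$ and $s(i) = -1$ if $i > n$, and record the relations $s(i)s(i') = -1$ and $E^2 = -I$. Applying $\sigma(x) = Ex^{\mathsf{tr}}E$ to $E_{k\ell}$ then reduces, via two applications of the rule $E_{ij}E_{pq} = \delta_{jp}E_{iq}$, to the clean formula
\[
\sigma(E_{k\ell}) = -s(k)s(\ell)\, E_{\ell',k'}, \qquad \bar{\sigma}(E_{k\ell}) = E_{k\ell} - s(k)s(\ell)\, E_{\ell',k'}.
\]
In passing, this shows $\bar{\sigma}(E_{k\ell}) \neq 0$: the only way for the two summands to be scalar multiples of one another is $(k,\ell) = (\ell',k')$ (a single condition, since $i'' = i$), but then $s(k)s(\ell) = s(k)s(k') = -1$, so the two terms do not cancel.

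Next I would verify the equality of linear functionals $\varphi(\epsilon_k - \epsilon_\ell)|_{\h^\sigma} = (\epsilon_k - \epsilon_\ell)|_{\h^\sigma}$. Unpacking the definition of $\varphi$,
\[
\varphi(\epsilon_k - \epsilon_\ell) = \tfrac{1}{2}(\epsilon_k - \epsilon_{k'} - \epsilon_\ell + \epsilon_{\ell'}),
\]
so evaluating at $h = \diag(d_1,\ldots,d_{2n}) \in \h^\sigma$ and applying the constraint $d_{i'} = -d_i$ from \eqref{eqn.CSA} collapses the right-hand side to $d_k - d_\ell = (\epsilon_k - \epsilon_\ell)(h)$, as required.

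Finally, to place $\bar{\sigma}(E_{k\ell})$ inside $\fg_{\varphi(\epsilon_k-\epsilon_\ell)}$, I first observe that $\bar{\sigma}(E_{k\ell})$ is $\sigma$-fixed (since $\sigma$ is an involution, $\sigma(\bar\sigma(x)) = \sigma(x)+x = \bar\sigma(x)$), and therefore lies in $\fg_C$. Then the standard bracket computation $[h, E_{k\ell}] = (d_k - d_\ell)E_{k\ell}$ combined with $[h, E_{\ell',k'}] = (d_{\ell'} - d_{k'})E_{\ell',k'} = (d_k - d_\ell)E_{\ell',k'}$ (again using $d_{i'} = -d_i$) yields
\[
[h,\, \bar{\sigma}(E_{k\ell})] = (d_k - d_\ell)\, \bar{\sigma}(E_{k\ell}) = \varphi(\epsilon_k - \epsilon_\ell)(h)\cdot \bar{\sigma}(E_{k\ell}),
\]
so $\bar{\sigma}(E_{k\ell})$ is a nonzero weight vector of weight $\varphi(\epsilon_k - \epsilon_\ell)$ for the Cartan subalgebra $\h^\sigma$ of $\fg_C$. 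The entire argument is routine calculation; the only mild obstacle is bookkeeping for the sign function $s$ and keeping straight the distinction between the involution on indices $i \mapsto i'$ and the involution on weights $\epsilon_i \mapsto -\epsilon_{i'}$.
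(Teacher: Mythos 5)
Your argument is correct, and it lands on the same two facts as the paper — the eigenvalue identity $[h,\bar\sigma(E_{k\ell})]=(\epsilon_k-\epsilon_\ell)(h)\,\bar\sigma(E_{k\ell})$ and the equality $\varphi(\epsilon_k-\epsilon_\ell)(h)=(\epsilon_k-\epsilon_\ell)(h)$ on $\h^\sigma$ — but you get to the first fact by a different route. The paper exploits that $\sigma$ is a Lie algebra automorphism fixing $h$: from $\sigma([h,\sigma(E_{k\ell})])=[h,E_{k\ell}]$ one reads off immediately that $\sigma(E_{k\ell})$ is an $\ad_h$-eigenvector with the same eigenvalue as $E_{k\ell}$, without ever computing what $\sigma(E_{k\ell})$ actually is. You instead expand $E=\sum_i s(i)E_{i,i'}$, obtain $\sigma(E_{k\ell})=-s(k)s(\ell)E_{\ell',k'}$ by direct matrix multiplication, and then use the $\h^\sigma$ constraint $d_{i'}=-d_i$ a second time to match the two eigenvalues. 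Both are routine; the paper's version is a line shorter and avoids the sign-bookkeeping, while yours has the modest advantage of being fully explicit and of giving, as a byproduct, the nonvanishing $\bar\sigma(E_{k\ell})\neq 0$ (which the paper leaves implicit but which one needs to know that $\bar\sigma$ really carries type A root vectors onto a spanning vector of the type C root space).
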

\begin{proof} Since $E_{k\ell}$ is a root vector corresponding to the root $\epsilon_k-\epsilon_\ell\in \Phi_A$ we have $[h, E_{k\ell}] = (\epsilon_k - \epsilon_\ell)(h)E_{k\ell}$. As $\sigma$ is an involution of $\mathfrak{sl}_{2n}(\C)$ and $\sigma(h)=h$ we get
\[
\sigma([h, \sigma(E_{k\ell})]) = [h, E_{k\ell}] = (\epsilon_k-\epsilon_\ell)(h)E_{k\ell}\Rightarrow [h, \sigma(E_{k\ell})] =  (\epsilon_k-\epsilon_\ell)(h) \sigma(E_{k\ell}).
\]
Thus,
\begin{eqnarray*}
[h, \bar{\sigma}(E_{k\ell})] &=&  [h, E_{k\ell}]+[h, \sigma(E_{k\ell})] \\
&=& (\epsilon_k-\epsilon_\ell)(h) E_{k\ell}+ (\epsilon_k-\epsilon_\ell)(h) \sigma(E_{k\ell})\\
&=&(\epsilon_k-\epsilon_\ell)(h) \bar{\sigma}(E_{k\ell}).
\end{eqnarray*}
To conclude the argument, we have only to show that $\varphi(\epsilon_k - \epsilon_\ell)(h) = (\epsilon_k - \epsilon_\ell)(h)$.  Since $h\in \h^\sigma$, we have $\epsilon_k(h) = -\epsilon_{k'}(h)$ for all $k\in [2n]$ (see \eqref{eqn.CSA}).  Thus 
\[
\varphi(\epsilon_k - \epsilon_\ell)(h) = \frac{1}{2}(\bar{\epsilon}_k - \bar{\epsilon}_\ell)(h) = \frac{1}{2}(\epsilon_k - \epsilon_{k'} - \epsilon_\ell + \epsilon_{\ell'})(h) = (\epsilon_k - \epsilon_\ell)(h)
\]
as desired.
\end{proof}

Before arguing that $H$ is a type A Hessenberg space, we prove it is $\sigma$-invariant.

\begin{lemma}\label{lemma: H sigma stable} Let $H_C \subseteq \mathfrak{sp}_{2n}(\C)$ be a type C Hessenberg space and $H \subseteq \mathfrak{sl}_{2n}(\C)$ be the subspace defined as in~\eqref{eqn.Hess.C.to.A} above. Then $\sigma(H)= H$.
\end{lemma}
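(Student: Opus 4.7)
The approach is to exploit the Cartan decomposition of $H$. Since
$$H = (H \cap \h) \;\oplus\; \bigoplus_{\alpha \in \Phi_H} \g_\alpha,$$
it suffices to prove three things: (i) the set $\Phi_H$ is stable under the involution $\alpha \mapsto \sigma(\alpha)$ of $\Phi_A$; (ii) for each root $\alpha \in \Phi_A$, $\sigma(\g_\alpha) = \g_{\sigma(\alpha)}$; (iii) $\sigma(H \cap \h) = H \cap \h$.

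For (i), recall that $\Phi_H = \varphi^{-1}(\Phi_{H_C})$. By Lemma~\ref{lemma.roots}(2), for any $\alpha \in \Phi_A$ the fiber $\varphi^{-1}(\varphi(\alpha))$ is exactly the $\sigma$-orbit of $\alpha$, so $\varphi(\sigma(\alpha)) = \varphi(\alpha)$. Therefore $\alpha \in \Phi_H$ forces $\sigma(\alpha) \in \Phi_H$.

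For (ii), a direct computation from $\sigma(x) = Ex^{\mathsf{tr}}E$ and the explicit form of $E$ yields
$$\sigma(E_{ij}) = \epsilon_i\epsilon_{j'}\, E_{j'i'},$$
where $\epsilon_k = 1$ if $k\le n$ and $\epsilon_k = -1$ if $k>n$ (in particular $\epsilon_k\epsilon_{k'}=-1$). Since $\sigma(\epsilon_i - \epsilon_j) = \epsilon_{j'} - \epsilon_{i'}$, the vector $\sigma(E_{ij})$ is a nonzero element of $\g_{\sigma(\epsilon_i-\epsilon_j)}$, giving (ii).

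For (iii), $H\cap\h$ is spanned by two kinds of generators. The first, elements of $H_C \cap \h^\sigma$, are $\sigma$-fixed by definition. For the second, each generator $h_{ij} = E_{ii}-E_{jj}$ satisfies
$$\sigma(h_{ij}) = \sigma(E_{ii}) - \sigma(E_{jj}) = -E_{i'i'} + E_{j'j'} = h_{j'i'}$$
by the sign computation in (ii). Since $\epsilon_i - \epsilon_j, \epsilon_j - \epsilon_i \in \Phi_H$ by hypothesis, (i) yields $\epsilon_{j'} - \epsilon_{i'},\epsilon_{i'} - \epsilon_{j'} \in \Phi_H$, so $h_{j'i'}$ is itself one of the generators of $H\cap\h$. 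Hence $\sigma(H\cap\h) \subseteq H\cap\h$, and equality follows because $\sigma$ is an involution.

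The only nontrivial step is the sign computation $\sigma(E_{ij}) = \epsilon_i\epsilon_{j'}E_{j'i'}$ in (ii); everything else is formal bookkeeping once Lemma~\ref{lemma.roots}(2) is in hand. This computation is also exactly what is needed later (indeed, it already underlies Lemma~\ref{lemma.C-to-A-action}), so no new ideas beyond those established earlier in the section are required.
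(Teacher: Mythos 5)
Your proof is correct and follows essentially the same route as the paper: decompose $H$ along the Cartan decomposition, verify $\sigma$-stability on each piece using $\sigma(E_{ij})\in\C\{E_{j'i'}\}$ and the compatibility of the folding map with $\sigma$. The only cosmetic differences are that you invoke Lemma~\ref{lemma.roots}(2) where the paper computes $\varphi(\epsilon_{j'}-\epsilon_{i'})=\varphi(\epsilon_i-\epsilon_j)$ directly, and you record the explicit sign in $\sigma(E_{ij})$, which the paper omits since only the containment in $\C\{E_{j'i'}\}$ is needed (do note your sign symbol clashes with the paper's use of $\epsilon_k$ for the coordinate functionals on $\h$).
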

\begin{proof} It suffices to show that $\sigma(H)\subseteq H$.  To do this, we check that 
\begin{enumerate}
\item $\sigma(E_{ij}) \in H$ for all $i,j$ such that $\epsilon_i-\epsilon_j\in \Phi_H$, i.e., such that $\varphi(\epsilon_i - \epsilon_j) \in \Phi_{H_C}$,
\item $\sigma(h_{ij})\in H$ for all $i,j$ such that $\epsilon_i-\epsilon_j,\epsilon_j-\epsilon_i \in \Phi_H$, and
\item $\sigma(h)\in H$ for all $h\in H_C\cap \h^\sigma$.
\end{enumerate}
Condition (3) follows immediately from the facts that $h\in \h^\sigma$ and $H_C\cap \h^\sigma\subseteq H$.  Note that $\sigma(E_{ij}) \in \C\{E_{j'i'}\}$, so to prove (1) it suffices to show that $\epsilon_{j'}-\epsilon_{i'}\in \Phi_H$, i.e.~that $\varphi(\epsilon_{j'}-\epsilon_{i'}) \in \Phi_{H_C}$. But since
\begin{eqnarray}\label{eqn.folding2}
\varphi(\epsilon_{j'}-\epsilon_{i'}) = \frac{1}{2}(\bar{\epsilon}_{j'} - \bar{\epsilon}_{i'}) = \frac{1}{2}( \bar{\epsilon}_{i} - \bar{\epsilon}_j) = \varphi(\epsilon_i-\epsilon_j)
\end{eqnarray}
we do indeed get $\varphi(\epsilon_{j'}-\epsilon_{i'}) = \varphi(\epsilon_i-\epsilon_j) \in \Phi_{H_C}$.

Finally, we prove (2). Suppose $i,j\in [2n]$ such that $\pm(\epsilon_i-\epsilon_j)\in \Phi_H$, and consider 
\[
\sigma(h_{ij}) = \sigma(E_{ii}-E_{jj}) =  - E_{i'i'} + E_{j'j'} = h_{j'i'}.
\]
By~\eqref{eqn.folding2} we have $\varphi(\epsilon_{j'}-\epsilon_{i'})=\varphi(\epsilon_i-\epsilon_j) \in \Phi_{H_C}$ so $\epsilon_{j'}-\epsilon_{i'} \in \Phi_H$ and similarly $\epsilon_{i'}-\epsilon_{j'}\in \Phi_H$ also.
By the definition of $H$, this implies $h_{j'i'}\in H$, as desired.
\end{proof}

Next, we argue that $\Phi_H$ is an upper order ideal with respect to the partial order $\leq$ on $\Phi_A$.

\begin{lemma} \label{lemma.upper.order} Let $H_C \subseteq \fg_C$ be a type C Hessenberg space. The corresponding subset $\Phi_H\subset\Phi_A$ defined as in~\eqref{eqn.Hroots} is an upper order ideal with respect to the partial order $\leq$ on $\Phi_A$. 
\end{lemma}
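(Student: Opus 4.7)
The plan is to reduce the statement to the analogous fact on the type C side together with the order-preserving nature of the folding map. Concretely, I would show first that $\Phi_{H_C}\subseteq \Phi_C$ is an upper order ideal with respect to the partial order $\leq$ defined in~\eqref{eqn.Demazure-order}, and then pull this back through $\varphi$ using Lemma~\ref{lemma.roots}(3).

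For the first step, suppose $\gamma\in \Phi_{H_C}$, so that $\fg_\gamma\subseteq H_C$, and let $\beta\in\Phi_C$ satisfy $\gamma\leq \beta$. By the definition of $\leq$ in~\eqref{eqn.Demazure-order}, $\beta\geq \gamma$ exactly means $\fg_\beta$ is contained in the Demazure module $H_\gamma$ generated by $E_\gamma$ inside $\fg_C$. Since $H_C$ is $B_C$-invariant and contains $E_\gamma$, it must contain the entire $B_C$-module generated by $E_\gamma$, namely $H_\gamma$. Hence $\fg_\beta\subseteq H_\gamma\subseteq H_C$, so $\beta\in \Phi_{H_C}$. (Alternatively, one can invoke Lemma~\ref{lem.partial.order} together with the fact~\eqref{eqn.Hproj} that any root projection of a Hessenberg space either vanishes or contains the corresponding root space, and inductively build $\beta$ from $\gamma$ by adding positive roots one at a time, each step producing a root vector that lies in $[\b_C,H_C]\subseteq H_C$.)

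For the second step, suppose $\alpha\in\Phi_H$ and $\alpha\leq \beta$ with $\beta\in\Phi_A$. By definition of $\Phi_H$ in~\eqref{eqn.Hroots} we have $\varphi(\alpha)\in \Phi_{H_C}$, and by Lemma~\ref{lemma.roots}(3) the folding map is order-preserving, so $\varphi(\alpha)\leq \varphi(\beta)$ in $\Phi_C$. Applying the first step, $\varphi(\beta)\in \Phi_{H_C}$, which by~\eqref{eqn.Hroots} means $\beta\in \Phi_H$. This is exactly the assertion that $\Phi_H$ is an upper order ideal in $(\Phi_A,\leq)$.

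The main obstacle is really a bookkeeping one: making sure the two uses of the symbol ``$\leq$'' (one on $\Phi_A$, one on $\Phi_C$) are indeed the Demazure-module orders coming from the same definition~\eqref{eqn.Demazure-order} applied to $\mathfrak{sl}_{2n}(\C)$ and $\mathfrak{sp}_{2n}(\C)$ respectively, so that Lemma~\ref{lemma.roots}(3) can be applied as stated. Once that is understood, the proof is essentially a one-line application of order preservation and $B_C$-invariance; no new geometric input beyond Lemma~\ref{lemma.roots} and the definition of a Hessenberg space is needed.
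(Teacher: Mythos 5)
Your argument is correct and matches the paper's proof essentially step for step: both reduce the claim to the fact that $\Phi_{H_C}$ is an upper order ideal in $(\Phi_C,\leq)$ and then pull back along the order-preserving folding map $\varphi$ using Lemma~\ref{lemma.roots}(3). The only cosmetic difference is that you spell out the proof that $\Phi_{H_C}$ is an upper order ideal (via $B_C$-invariance and Demazure modules), whereas the paper simply cites this as established in Section~\ref{sec.demazure}.
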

\begin{proof} Suppose $i,j\in [2n]$ such that $\epsilon_i-\epsilon_j \in \Phi_H$ and let $\epsilon_k - \epsilon_\ell\in \Phi_A$ with $\epsilon_k - \epsilon_\ell \geq \epsilon_i - \epsilon_j$.  By Lemma~\ref{lemma.roots}, $\varphi(\epsilon_k - \epsilon_\ell) \geq \varphi(\epsilon_i- \epsilon_j)$.
Since $H_C$ is $B$-invariant, $\Phi_{H_C}$ is an upper order ideal with respect to the partial order $\leq$ on $\Phi_C$ (see Section~\ref{sec.demazure}).  Thus as  $\varphi(\epsilon_i - \epsilon_j) \in \Phi_{H_C}$, we have $\varphi(\epsilon_k - \epsilon_\ell)\in \Phi_{H_C}$ and so $\epsilon_k - \epsilon_\ell\in \Phi_H$ as desired.
\end{proof}

\vspace*{.1cm}

\begin{proof}[Proof of Theorem~\ref{thm: C to A}(1).] We begin by arguing that $[\fb, H]\subseteq H$, which proves  that $H$ is a type A Hessenberg space.  Since $\fb = \h \oplus \C\{E_{k\ell} : 1\leq k<\ell \leq 2n\}$, we have only to show that 
\begin{itemize}
\item[(A)] $[h, x] \in H$ for all $h\in \h$ and $x\in H$, and that 
\item[(B)] $[E_{k\ell}, x]\in H$ for all $1\leq k<\ell\leq 2n$ and $x\in H$.  
\end{itemize}
By the definition of $H$ we may write
\[
x = h' + \sum_{ \substack{1\leq i<j \leq n\\ \pm (\epsilon_i - \epsilon_j)\in \Phi_H}} d_{ij} h_{ij}+ \sum_{\epsilon_i-\epsilon_j\in \Phi_H} c_{ij}E_{ij}
\]
for $c_{ij}, d_{ij}\in \C$ and $h'\in H_C\cap \h^\sigma$. 
Assertion (A) admits a straightforward proof: since $h'+ \sum_{ \pm (\epsilon_i - \epsilon_j)\in \Phi_H } d_{ij} h_{ij}\in  \h$ we have
\[
[h, x] = \sum_{\epsilon_i-\epsilon_j \in \Phi_H}c_{ij} [h, E_{ij}] = \sum_{\epsilon_i-\epsilon_j \in \Phi_H} c_{ij} (\epsilon_i-\epsilon_j)(h) E_{ij} \in H
\]
by the definition of $H$.

We turn to assertion (B). Let $k, \ell\in [2n]$ with $k<\ell$. 
We have
\begin{eqnarray} \label{eklbracket}
[E_{k\ell}, x] = [E_{k\ell}, h'] + \sum_{ \substack{1\leq i<j \leq n \\ \pm (\epsilon_i - \epsilon_j)\in \Phi_H}} d_{ij}[ E_{k\ell}, h_{ij}]  +   \sum_{\epsilon_i-\epsilon_j\in \Phi_H} c_{ij}[ E_{k\ell}, E_{ij}].
\end{eqnarray}
To prove $[E_{k\ell}, x]\in H$ we argue that every Lie bracket appearing in each summand on the right side of (\ref{eklbracket}) is an element of $H$.

\vspace*{.1in}

\textbf{Case 1:} Suppose $i\neq j$ with $\epsilon_i-\epsilon_j\in \Phi_H$.  First if $\{i,j\} = \{k, \ell\}$ we have
\[
[E_{k\ell}, E_{ij}] = \left\{\begin{array}{cc}  0 & \textup{ if $i= k$ and $j = \ell$} \\ 
-h_{ij} & \textup{ if $i=\ell$ and $j=k$.} \end{array} \right.
\]
Thus if $[E_{k\ell},E_{ij}] \neq 0$, then $j=k<\ell = i$. In this case our assumption that $\epsilon_i-\epsilon_j\in \Phi_H$ implies $\epsilon_j-\epsilon_i\in \Phi_H$ since $\Phi_H$ is an upper-order ideal by Lemma~\ref{lemma.upper.order}. Now $h_{ij}=[E_{ij},E_{ji}]\in H$, by definition of $H\cap \h$. 
On the other hand, if $\{i,j\} \neq \{k, \ell\}$ and $[E_{k\ell}, E_{ij}]\neq 0$ then $[E_{k\ell}, E_{ij}] \in \fg_{\gamma}$ where $\gamma= (\epsilon_k - \epsilon_\ell ) + (\epsilon_i- \epsilon_j)$.  In particular, we have $ \epsilon_i -\epsilon_j \leq \gamma$ since $\epsilon_k - \epsilon_\ell \in \Phi_A^+$.  This implies $\gamma\in \Phi_H$ by Lemma~\ref{lemma.upper.order}, hence $[E_{k\ell}, E_{ij}] \in \fg_{\gamma} \subseteq H$.

\vspace*{.1in}

\textbf{Case 2:} Let $i,j\in [2n]$ such that $i<j$ and $\epsilon_i-\epsilon_j, \epsilon_j-\epsilon_i \in \Phi_H$.  By definition we must have $h_{ij} = E_{ii}-E_{jj} \in H\cap \h$.  Furthermore, we know $[E_{k\ell}, h_{ij}] = (\epsilon_\ell - \epsilon_k)(h_{ij}) E_{k\ell}$.  Thus, we have only to show that $\epsilon_k -  \epsilon_\ell \in \Phi_H$ whenever $(\epsilon_\ell - \epsilon_k)(h_{ij})\neq 0$. Since
\[
(\epsilon_\ell - \epsilon_k)(h_{ij}) = \delta_{\ell i }-\delta_{\ell j } -\delta_{ki}+\delta_{kj},
\]
where $\delta_{ab}$ is the Kronecker delta function, the condition $(\epsilon_\ell - \epsilon_k)(h_{ij}) \neq 0$ implies $\{i,j\} \cap \{k,\ell\} \neq \varnothing$. Now, we consider the various possibilities and show that that $k\leq j$ and $\ell \geq i$ in each case.
Recall that $i<j$ and $k < \ell$ by assumption.
\begin{itemize}
\item If $i=k$ then $k=i<j$ and $\ell >k=i$.

\item If $i=\ell$, then it follows immediately that $k< \ell = i <j$. Therefore, $k<j$ and $\ell \geq i$.

\item If $i\neq k$ and $i\neq \ell$ then our assumptions imply that either $j=k$ or $j=\ell$.  In the first case, we get $i<j=k<\ell$, hence $k\leq j$ and $\ell >i$.  In the second case, we get $k<\ell = j$ and $\ell = j >i$.
\end{itemize}
Since $k\leq j$ and $\ell \geq i$, we see that $\epsilon_j-\epsilon_i\le \epsilon_k-\epsilon_\ell$.
The assumption that $\epsilon_j-\epsilon_i\in \Phi_H$ implies $\epsilon_k - \epsilon_\ell \in \Phi_H$ by Lemma~\ref{lemma.upper.order}, as desired.

\vspace*{.1in}

\textbf{Case 3:} Suppose $h'\in H_C \cap \h^\sigma$.  As above, we get $[E_{k\ell}, h'] = (\epsilon_\ell-\epsilon_k)(h') E_{k\ell}$ and thus $[E_{k\ell}, h'] \in H$ will follow if we are able to show $\epsilon_k - \epsilon_\ell \in \Phi_H$ whenever $(\epsilon_\ell-\epsilon_k)(h')\neq 0$.  By Lemma~\ref{lemma.C-to-A-action}, we have 
\[
[\bar{\sigma}(E_{k\ell}), h'] = -\varphi(\epsilon_k - \epsilon_\ell)(h') \bar{\sigma}(E_{k\ell}) = (\epsilon_\ell-\epsilon_k)(h') \bar{\sigma}(E_{k\ell}).
\]
We observe that $H_C$ is a type C Hessenberg space, $\bar{\sigma}(E_{k\ell})\in\mathfrak{b}^\sigma$, and $h'\in H_C$. It follows that $[\bar{\sigma}(E_{k\ell}), h] \in H_C$.
Hence, the assumption $(\epsilon_\ell-\epsilon_k)(h')\neq 0$ implies $\bar{\sigma}(E_{k,\ell})\in H_C$.  However, since $\bar{\sigma}(E_{k\ell})$ spans the root space corresponding to $\varphi(\epsilon_k - \epsilon_\ell)$ in $\fg_C$ we have $\varphi(\epsilon_k - \epsilon_\ell)\in \Phi_{H_C}$ and thus $\epsilon_k- \epsilon_\ell \in \Phi_H$ as desired.

\vspace*{.1in}
Having settled all cases, we conclude that $H$ is a type A Hessenberg space.
To complete the proof, we establish now that $H^\sigma = H_C$.  Since, by Lemma~\ref{lemma: H sigma stable}, $H$ is $\sigma$-stable we know $H^\sigma = \bar{\sigma}(H)$ and we will show $\bar{\sigma}(H) = H_C$. 
  Since $H_C\subseteq H$ by definition, the inclusion $H_C\subseteq \bar{\sigma}(H)$ is a consequence of $\bar{\sigma}(H_C)=H_C$.

We verify that $\bar{\sigma}(H) \subseteq H_C$.
It is straightforward that $\bar \sigma(h)\in H_C$ if $h\in H_C\cap \mathfrak h^\sigma$.
Next, if $\epsilon_i - \epsilon_j\in \Phi_H$ then $\varphi(\epsilon_i-\epsilon_j)\in \Phi_{H_C}$ and  Lemma~\ref{lemma.C-to-A-action} now implies $\bar{\sigma}(E_{ij}) \in H_C$.  
Last, consider $1\leq i < j \leq 2n$ such that $\epsilon_i - \epsilon_j,\epsilon_j - \epsilon_i \in \Phi_H$. Then $\varphi(\epsilon_i- \epsilon_j),-\varphi(\epsilon_i- \epsilon_j)  \in \Phi_{H_C}$ and by another application of Lemma~\ref{lemma.C-to-A-action} we get
\[
\bar{\sigma}(E_{ij})\in \fb^\sigma, \bar{\sigma}(E_{ji})\in H_C \Rightarrow [\bar{\sigma}(E_{ij}), \bar{\sigma}(E_{ji})] \in H_C.
\]
Using the fact that $\sigma(E_{ji}) \in \C\{E_{i'j'}\}$ and $i=j'$ if and only if $i'=j$ we obtain  
	\[
	[E_{ij}, \sigma(E_{ji})] = \begin{cases} 0 & \textup{ if $j \neq i'$} \\
	 h_{ij} & \textup{ if $j=i'$.}
	 \end{cases}
	\]
Therefore we have
\begin{eqnarray*}
[\bar{\sigma}(E_{ij}), \bar{\sigma}(E_{ji})] &=& [E_{ij}, E_{ji}] + [\sigma(E_{ij}), E_{ji}]+[E_{ij}, \sigma(E_{ji})]+ [\sigma(E_{ij}), \sigma(E_{ji})]\\
&=& [E_{ij}, E_{ji}]+\sigma([E_{ij}, E_{ji}]) + [E_{ij}, \sigma(E_{ji})]+\sigma([E_{ij}, \sigma(E_{ji})])\\
&=& \bar{\sigma}([E_{ij}, E_{ji}]) + \bar{\sigma}([E_{ij}, \sigma(E_{ji})]) \\
&=& c\, \bar{\sigma}(h_{ij})
\end{eqnarray*}
with $c\in \{1,2\}$. This implies $\bar{\sigma}(h_{ij})\in H_C$, and we conclude that $\bar{\sigma}(H) \subseteq H_C$.
\end{proof}

\subsection{Type C Pattern Avoidance}

Recall that $W_C$ denotes the subgroup in $S_{2n}$ consisting of signed permutations.  Thus the notion of pattern avoidance as defined in Section~\ref{sec.typeA-setup} makes sense for elements of $W_C$.  
The objective of this section is to prove Theorem~\ref{patternc.intro} using Theorem~\ref{patternc} below, generalizing the pattern avoidance result of Theorem~\ref{pattern} to the type C setting.

We may write each matrix in $\fg_C$ in terms of the Chevalley basis of type C, fixed as in \cite[Section 6.1]{Lakshmibai-Raghavan}.  Given $\gamma\in \Phi_C$, we denote by $E_\gamma $ the corresponding root vector and by $\c_\gamma: \mathfrak{sp}_{2n}(\C) \to \C$ the coordinate function returning the coefficient of $E_\gamma$.  Note that if $\epsilon_i-\epsilon_j \in \Phi_A$ such that $\varphi(\epsilon_i-\epsilon_j) = \gamma$ then $\C\{E_\gamma\} = \C\{ \bar{\sigma}(E_{ij})\}$ by Lemma~\ref{lemma.C-to-A-action}. In particular, for all $y\in \fg_C$ we have 
\begin{eqnarray}\label{eqn.proj}
\c_{\gamma}(y)\neq0
	\Leftrightarrow
	\c_{ij}(y)\neq 0 \Leftrightarrow \c_{j'i'}(y)\neq 0
\end{eqnarray}
where $\c_{ij}: \mathfrak{sl}_{2n}(\C) \to \C$ denotes the coordinate function returning the coefficient of $E_{ij}$ as in Section~\ref{sec.pattern-adjoint} above.
We begin with the following statement, which is a Type C analogue of Lemma~\ref{lem.linearalg}.

\begin{lemma}\label{lemma.typeC-Baction} Let $x\in \fg_C$ and suppose $\c_\gamma (x)\neq 0$ for some $\gamma\in \Phi_C$.  For all $\beta\in \Phi_C$ such that $\beta \geq \gamma$ there exists $b\in B_C$ such that $\c_\beta (b \cdot x) \neq 0$.
\end{lemma}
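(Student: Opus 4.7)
The plan is to prove this by induction on the length of a chain in Proctor's characterization (Lemma~\ref{lem.partial.order}) of the Demazure order, carried out inside $B_C$ using standard one-parameter subgroups. Applying Lemma~\ref{lem.partial.order} in $\Phi_C$, the relation $\beta \geq \gamma$ yields positive roots $\delta_1, \ldots, \delta_k \in \Phi_C^+$ and positive integers $n_1, \ldots, n_k$ with $\beta = \gamma + \sum_i n_i \delta_i$ and with every partial sum lying in $\Phi_C$. Splitting each $n_i$-fold addition into $n_i$ individual $+\delta_i$ steps produces a chain of roots $\gamma = \gamma_0, \gamma_1, \ldots, \gamma_M = \beta$ in $\Phi_C$ with each successive difference $\delta^{(i)} := \gamma_i - \gamma_{i-1}$ a positive root of $\Phi_C$.

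The base case $M=0$ is immediate with $b$ the identity. For the inductive step, suppose we have produced $b' \in B_C$ with $\c_{\gamma_{M-1}}(b' \cdot x) \neq 0$, and set $\delta := \delta^{(M)} = \gamma_M - \gamma_{M-1} \in \Phi_C^+$. Since $E_\delta \in \fg_{C} \cap \mathfrak{b}$, the exponential $u_\delta(t) := \exp(t E_\delta)$ lies in $B_C$ for every $t \in \C$. Expanding the adjoint action gives
\[
u_\delta(t) \cdot (b' \cdot x) \;=\; b' \cdot x \;+\; t\, [E_\delta, b' \cdot x] \;+\; O(t^2),
\]
so the polynomial $P(t) := \c_{\gamma_M}\bigl(u_\delta(t)\, b' \cdot x\bigr) \in \C[t]$ has linear coefficient equal to $\c_{\gamma_M}([E_\delta, b' \cdot x])$.

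The crux of the argument is to show that this linear coefficient is nonzero, and this is the step requiring the most care. Write $b' \cdot x$ in terms of the Chevalley basis as the sum of a torus component $h \in \h \cap \fg_C$ and root-vector terms $\sum_\mu \c_\mu(b'\cdot x) E_\mu$. The bracket $[E_\delta, h]$ lies in $\fg_\delta$, which does not meet $\fg_{\gamma_M}$ since $\gamma_{M-1} \neq 0$ forces $\delta \neq \gamma_M$; and $[E_\delta, E_\mu]$ can contribute to $\fg_{\gamma_M}$ only when $\mu + \delta = \gamma_M$, i.e.\ $\mu = \gamma_{M-1}$. By the Chevalley basis property, $[E_\delta, E_{\gamma_{M-1}}]$ is a nonzero scalar multiple of $E_{\gamma_M}$, and therefore
\[
\c_{\gamma_M}\bigl([E_\delta, b' \cdot x]\bigr) \;=\; c\cdot \c_{\gamma_{M-1}}(b' \cdot x) \;\neq\; 0
\]
for some nonzero constant $c$. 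Hence $P(t)$ is a nonzero polynomial in $t$, so $P(t_0) \neq 0$ for some $t_0 \in \C$; setting $b := u_\delta(t_0) b' \in B_C$ completes the inductive step and the proof.
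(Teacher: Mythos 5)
Your approach is genuinely different from the paper's, and in a pleasing way: the paper proves Lemma~\ref{lemma.typeC-Baction} by lifting $\gamma$ and $\beta$ to $\Phi_A$ via the folding map $\varphi$ and then carrying out an explicit, case-by-case $2n\times 2n$ matrix computation with hand-built elements of $B_C$ of the form $I+\alpha E_{j\ell}\pm\alpha E_{\ell'j'}$. You instead stay inside $\fg_C$, invoke Proctor's chain characterization (Lemma~\ref{lem.partial.order}) of $\leq$, and reduce to a single-step bracket computation with Chevalley structure constants. The latter is more structural and avoids the case analysis and the sign bookkeeping that the folding map forces on the paper's proof.

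However, there is a genuine gap in the reduction to unit steps. Proctor's lemma only guarantees that the partial sums $\gamma + n_1\delta_1 + \cdots + n_m\delta_m$ are roots for $1\le m\le k$; it does \emph{not} say the finer subdivisions $\gamma+\cdots+n_{m-1}\delta_{m-1}+j\delta_m$ for $0<j<n_m$ are roots. You implicitly appeal to the root-string property to fill this in, which does work when the current root $\gamma'$ satisfies $\gamma'\neq\pm\delta_m$, because then $\{j\in\Z:\gamma'+j\delta_m\in\Phi_C\}$ is an integer interval. But the degenerate case $\gamma'=-\delta_m$ (which forces $n_m=2$, so the step is $-\delta_m\to\delta_m$) slips through: the would-be intermediate point is $\gamma'+\delta_m=0\notin\Phi_C$, so there is no chain of roots with unit positive-root differences. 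This case genuinely occurs; for instance $-2\varepsilon_n\leq 2\varepsilon_n$ in $\Phi_C$, and one checks that any Proctor chain from $-2\varepsilon_n$ to $2\varepsilon_n$ must contain such a step. Consequently, the inductive framework as written does not apply to those pairs $\gamma\leq\beta$, and your argument that $\delta\neq\gamma_M$ (used to discard the $[E_\delta,h]$ term) also breaks precisely here, since in the degenerate step $\gamma_M=\delta$.

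The gap is localized and repairable: in the step $-\delta\to\delta$, keep it as a single double step and look at the coefficient of $t^2$ in $P(t)=\c_\delta\bigl(u_\delta(t)\cdot y\bigr)$. The $\fg_\delta$-component of $[E_\delta,[E_\delta,y]]$ comes only from the $\h$-component of $[E_\delta,y]$, which is $\c_{-\delta}(y)[E_\delta,E_{-\delta}]=\c_{-\delta}(y)H_\delta$, and then $[E_\delta,\c_{-\delta}(y)H_\delta]=-2\c_{-\delta}(y)E_\delta$, so the $t^2$-coefficient of $P$ is $-\c_{-\delta}(y)\neq 0$. Adding this case restores the induction. (It is worth noting that the paper's proof is structured precisely to handle this: in the case $(k,\ell)=(j,i)$ the paper records a quadratic term in $\alpha$ with nonvanishing leading coefficient, which is the matrix incarnation of the same phenomenon.)
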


\begin{proof} 
Let $\gamma', \beta'\in \Phi_A$ be such that $\varphi(\gamma')=\gamma$ and $\varphi(\beta')=\beta$. By definition, $\gamma'\in \{\epsilon_i-\epsilon_j, \epsilon_{j'}-\epsilon_{i'}\}$ and $\beta' \in \{\epsilon_k-\epsilon_\ell , \epsilon_{\ell'}-\epsilon_{k'} \}$ for some $i,j,k,l\in [2n]$ such that $i\neq j$ and $k\neq \ell$.  Since $B_C \subseteq B$, by definition of the partial order $\leq$, we may assume without loss of generality that $\gamma' = \epsilon_i-\epsilon_j$ and $\beta'= \epsilon_k - \epsilon_\ell$ with $\beta'\geq \gamma'$, i.e. $k\le i$ and $\ell\ge j$.
By~\eqref{eqn.proj}, our assumption that $\c_\gamma(x)\neq 0$ implies $\c_{ij}(x)\neq 0$ and to complete the proof of the lemma, it suffices to verify that there exists $b\in B_C$ such that $\c_{k\ell}(b\cdot x)\neq 0$.  Throughout the rest of the proof, $\alpha$ will be a parameter that can take any complex value we choose.

If $(k,\ell)=(j,i)$ then $j<i$ and so $b_\alpha=I+\alpha E_{ji}+\alpha\sigma(E_{ji})\in B_C$.  We have $b_\alpha^{-1}=I-\alpha E_{ji}-\alpha\sigma(E_{ji})$ and $\sigma(E_{ji}) = \pm E_{i'j'}$.  Note that if $\sigma(E_{ji}) = -E_{i'j'}$, then we must have either $i,j\leq n$ or $i,j>n$ so $\delta_{ij'} = 0$ in that case.  Using the fact that 
the $(k,\ell)$-entry of the product of three $n\times n$ matrices $X,Y,Z$ is $\sum_{p=1}^n\sum_{q=1}^n X_{kp}Y_{pq}Z_{q\ell}$ we obtain
	$$
	\c_{i\ell}(b_\alpha\cdot x)=\c_{ji}(x)+\alpha(1+\delta_{ij'})(\c_{ii}(x)-\c_{jj}(x))+\alpha^2(1+3\delta_{ij'})(-\c_{ij}(x)).
	$$
Since $(1+3\delta_{ij'})(-\c_{ij}(x))\neq0$, there exists $\alpha \in \C$ such that $\c_{ji}(b_\alpha\cdot x)\neq 0$.

If $i\neq \ell$ the lemma will follow from the existence of $b_1,b_2\in B_C$ such that $\c_{i\ell}(b_1\cdot x)\neq 0$ and $\c_{k\ell}(b_2\cdot(b_1\cdot x))\neq 0$.
Symmetrically, if $j\neq k$ the lemma will follow from the existence of $b_1,b_2\in B_C$ such that $\c_{kj}(b_1\cdot x)\neq 0$ and $\c_{k\ell}(b_2\cdot(b_1\cdot x))\neq 0$.
Therefore, to settle the case $(k,\ell)\neq(j,i)$ it suffices to consider $i=k$ or $j=\ell$. 
Since both follow from similar arguments, we only write the proof assuming $k=i$. Moreover, we assume $\ell\neq j$ since the case $(i,j)=(k,\ell)$ is trivial.

If $\ell=j'$ then $b=I+\alpha E_{j\ell}\in B_C$.
Since $\c_{i\ell}(b\cdot x)=\c_{i\ell}(x)-\alpha \c_{ij}(x)$ is a nonzero polynomial in $\C[\alpha]$ we can choose $\alpha$ such that $\c_{i\ell}(b\cdot x)\neq 0$.
If $\ell\neq j'$ define $b\in B_C$ by
	$$
	b:=
	\begin{cases}
	I+\alpha E_{j\ell}+\alpha E_{\ell' j'}, & |\{j,\ell\}\cap[n]|= 1\\
	I+\alpha E_{j\ell}-\alpha E_{\ell' j'}, & |\{j,\ell\}\cap[n]|\neq 1
	\end{cases},
	\quad\text{so}
	\quad
	b^{-1}=
	\begin{cases}
	I-\alpha E_{j\ell}-\alpha E_{\ell' j'}, & |\{j,\ell\}\cap[n]|= 1\\
	I-\alpha E_{j\ell}+\alpha E_{\ell' j'}, & |\{j,\ell\}\cap[n]|\neq 1
	\end{cases}.
	$$
Using the formula for the $(k,\ell)$-entry of the product of three $n\times n$ matrices once more, we obtain,
	$$
\c_{i\ell}(b \cdot x)=
\begin{cases}
\c_{i\ell}(x)-\alpha \c_{ij}(x)+ \alpha\delta_{i\ell'}\c_{j'\ell}(x)-\alpha^2\delta_{i\ell'}\c_{j'j}(x), & |\{j,\ell\}\cap[n]|= 1\\
\c_{i\ell}(x)-\alpha \c_{ij}(x)- \alpha\delta_{i\ell'}\c_{j'\ell}(x)+\alpha^2\delta_{i\ell'}\c_{j'j}(x), & |\{j,\ell\}\cap[n]|\neq 1.
\end{cases}
	$$
If $i\neq \ell'$ then it is immediate that we can choose $\alpha$ such that $\c_{i\ell}(b^{-1}\cdot x)\neq 0$.
Finally, let's suppose that $i=\ell'$ and note that $ |\{j,\ell\}\cap[n]|= 1$ if and only if $ |\{j,i\}\cap[n]|\neq 1$. Since $x\in \fg_C$ it follows that the coefficient of $\alpha$ in $\c_{i\ell}(b\cdot x)$ is $-2\c_{ij}(x)\neq 0$. 
Since $\c_{i\ell}(b\cdot x)$ is a nonzero polynomial in $\C[\alpha]$ we can choose $\alpha$ such that $\c_{i\ell}(b\cdot x)\neq 0$.
\end{proof}

\begin{proposition}\label{prop: cell contained C then A} Assume that $x\in \fg_C$ and that $\B_C(x, H_C) \subseteq G_C/B_C$ is a type C Hessenberg variety.  Let $w \in W_C$ with $B_C\dot wB_C \subseteq \B_C(x, H_C)$.  If $H\subseteq \mathfrak{sl}_{2n}(\C)$ is the type A Hessenberg space defined using $H_C$ as in~\eqref{eqn.Hess.C.to.A} above,
then the type A Schubert cell $B\dot wB$ is contained in $ \B(x, H)$.
\end{proposition}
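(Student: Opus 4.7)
Plan:

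We wish to show $B\dot w B/B \subseteq \B_A(x,H)$, equivalently $b\cdot x \in \dot w \cdot H$ for every $b\in B$. Since $\dot w \cdot H$ is $T$-stable and decomposes as $\dot w(H\cap \h)\oplus \bigoplus_{\alpha\in w(\Phi_H)}\fg_\alpha$, this reduces to verifying:
\textup{(i)} the root support $R(x):=\{\alpha\in \Phi_A:\c_\alpha(b\cdot x)\neq 0\text{ for some }b\in B\}$ satisfies $R(x)\subseteq w(\Phi_H)$, and
\textup{(ii)} for every $b\in B$, the $\h$-component of $b\cdot x$ lies in $\dot w(H\cap \h)$.
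The hypothesis $B_C\dot w B_C\subseteq \B_C(x,H_C)$ gives the analogous type C statements: $R_C(x)\subseteq w(\Phi_{H_C})$, and the Cartan part of $b_1\cdot x$ for any $b_1\in B_C$ lies in $\dot w(H_C\cap \h^\sigma)$.

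The crux is the \textbf{Key Claim}: $\varphi(R(x))\subseteq R_C(x)$. Let $W(x)\subseteq \fg$ (resp.\ $W_C(x)\subseteq \fg_C$) denote the smallest $B$-invariant (resp.\ $B_C$-invariant) subspace containing $x$; then $R(x)=\{\alpha:\fg_\alpha\subseteq W(x)\}$ and $R_C(x)=\{\gamma:\fg_\gamma\subseteq W_C(x)\}$ are upper order ideals in the respective Demazure orders, by Lemma~\ref{lem.elsh} and Lemma~\ref{lemma.typeC-Baction}. Since $\varphi$ is order-preserving (Lemma~\ref{lemma.roots}(3)), it suffices to verify the claim on minimal elements $\alpha$ of $R(x)$. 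If $\c_\alpha(x)\neq 0$, then~\eqref{eqn.proj} yields $\c_{\varphi(\alpha)}(x)\neq 0$, and we are done. Otherwise $\fg_\alpha\subseteq W(x)$ must arise by Cartan activation: $\alpha>0$ and $\alpha(h)\neq 0$ for some $h\in W(x)\cap \h$. The only Cartan contributions to $W(x)$ beyond $x_0$ (the $\h$-component of $x$) come from brackets $[E_\gamma,cE_{-\gamma}]=c\,h_\gamma$ with $\gamma>0$, so $W(x)\cap \h$ is spanned by $x_0$ and the coroots $h_\gamma$ for $\gamma>0$ with $-\gamma\in R(x)$. A direct case analysis of $\alpha(h_\gamma)=\delta_{ki}-\delta_{kj}-\delta_{\ell i}+\delta_{\ell j}$ (with $\alpha=\epsilon_k-\epsilon_\ell$, $\gamma=\epsilon_i-\epsilon_j$, $i<j$, $k<\ell$) shows that $\alpha(h_\gamma)\neq 0$ forces $k\leq j$ and $\ell\geq i$, i.e.\ $-\gamma\leq \alpha$ in the Demazure order; but $\alpha>0$ while $-\gamma<0$, so $-\gamma<\alpha$ strictly, and $-\gamma\in R(x)$ contradicts the minimality of $\alpha$. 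Thus $\alpha(x_0)\neq 0$. Since $x_0\in \h^\sigma$, Lemma~\ref{lemma.C-to-A-action} gives $\varphi(\alpha)(x_0)=\alpha(x_0)\neq 0$, and as $\varphi(\alpha)>0$ by Lemma~\ref{lemma.roots}(1), the bracket $[E_{\varphi(\alpha)},x_0]$ is a nonzero multiple of $E_{\varphi(\alpha)}$ inside $W_C(x)$, showing $\varphi(\alpha)\in R_C(x)$.

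Granted the Key Claim, \textup{(i)} follows immediately: $R(x)\subseteq \varphi^{-1}(R_C(x))\subseteq \varphi^{-1}(w(\Phi_{H_C}))=w(\Phi_H)$ using Lemma~\ref{lemma.roots}(4). For \textup{(ii)}, the $\h$-component of each $b\cdot x$ lies in $W(x)\cap \h=\C\{x_0\}+\C\{h_\alpha:-\alpha\in R(x),\ \alpha>0\}$. The element $x_0$ lies in $W_C(x)\cap \h^\sigma\subseteq \dot w(H_C\cap \h^\sigma)\subseteq \dot w(H\cap \h)$, using the hypothesis and the inclusion $H_C\cap \h^\sigma\subseteq H\cap \h$ from~\eqref{eqn.Hess.C.to.A}. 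For each coroot $h_\alpha$ with $-\alpha\in R(x)$ and $\alpha>0$, the upper-ideal property of $R(x)$ also gives $\alpha\in R(x)$ (since $-\alpha\leq \alpha$), so the Key Claim applied to both $\pm\alpha$ together with the hypothesis yields $\pm\varphi(w^{-1}(\alpha))\in \Phi_{H_C}$, whence $\pm w^{-1}(\alpha)\in \Phi_H$. Writing $w^{-1}(\alpha)=\epsilon_p-\epsilon_q$, the formula defining $H\cap \h$ in~\eqref{eqn.Hess.C.to.A} places $h_{pq}\in H\cap \h$, and conjugation by $\dot w$ sends $h_{pq}$ to $h_\alpha$, so $h_\alpha\in \dot w(H\cap \h)$.

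The main obstacle is the Key Claim, and within it, ruling out Cartan activation of a minimal $\alpha$ by the secondary coroots $h_\gamma$ (with $-\gamma\in R(x)$); this is where the explicit type A calculation of $\alpha(h_\gamma)$ and its translation into the Demazure order become essential. Everything else amounts to careful bookkeeping with the folding map $\varphi$ and the order structures on $\Phi_A$ and $\Phi_C$.
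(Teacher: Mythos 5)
Your proposal is correct, and it takes a genuinely different route from the paper's argument. The paper decomposes $x=h+x_1$ into its Cartan and off--diagonal parts, writes $u\in U$ as $\exp(y)$ and computes $u^{-1}\cdot x$ via $\exp(\ad_{-y})$, then reduces to two explicit membership conditions (equations~\eqref{eqn.x1-support2} and~\eqref{eqn.h-support2}); the Cartan contribution of~\eqref{eqn.h-support2} is handled by producing the concrete elements $u_{pq}=I+\bar\sigma(E_{pq})\in B_C$ and invoking Lemma~\ref{lemma.typeC-Baction}. Your proof instead reformulates the conclusion as the containment $W(x)\subseteq \dot w\cdot H$ of the Demazure module generated by $x$, separates it into root support and Cartan parts, and rests everything on the Key Claim $\varphi(R(x))\subseteq R_C(x)$, proved order--theoretically by passing to minimal elements of $R(x)$ and ruling out activation by the secondary coroots $h_\gamma$ via the computation $\alpha(h_\gamma)\neq 0\Rightarrow -\gamma<\alpha$. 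Both arguments draw on the same lemmas (Lemmas~\ref{lem.linearalg}/\ref{lem.elsh}, \ref{lemma.roots}, \ref{lemma.C-to-A-action}, \ref{lemma.typeC-Baction}); what the paper's approach gains is explicitness, while yours gains a cleaner conceptual invariant and avoids the $\exp(\ad)$ bookkeeping entirely.

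Two points deserve a line or two more of justification in a written-out version: the spanning statement $W(x)\cap\h=\C\{x_0\}+\C\{h_\gamma:\gamma>0,\ -\gamma\in R(x)\}$ and the assertion that a minimal $\alpha\in R(x)$ with $\c_\alpha(x)=0$ must be ``Cartan activated.'' Both follow cleanly from the observation that $W(x)=W(x_0)+W(x_1)$ (where $x_1$ is the off--diagonal part of $x$, both summands in $W(x)$ by $T$-invariance) together with $H_\gamma\cap\h=0$ for $\gamma>0$: minimality then forces $\fg_\alpha\subseteq W(x_0)$ with $\alpha$ equal to a positive root $\gamma$ having $\gamma(x_0)\neq 0$. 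With those details filled in, the proof is complete.
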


\begin{proof} 
We write
\[
x = h + \sum_{\gamma\in \Phi_{C}} d_{\gamma} E_\gamma
\]
where $d_\gamma\in \C$, $E_{\gamma}$ is a nonzero root vector in $\fg_\gamma \subseteq \fg_C$, and $h\in \h_C$.  We set $x_1:= \sum_{\gamma\in \Phi_{C}} d_{\gamma} E_\gamma$. So, $x=h+x_1$.  Note that $\c_\gamma(x) = d_\gamma$ for all $\gamma\in \Phi_{C}$.

To prove $B\dot wB \subseteq \B_A(x,H)$, it suffices to show that $u\dot wB\in \B_A(x,H)$ for all $u$ in the unipotent radical $U$ of $B$.  
In particular, we must show
\begin{eqnarray}\label{eqn.adjoint1}
(u\dot w)^{-1}\cdot x = \dot w^{-1}u^{-1}\cdot h + \dot w^{-1}u^{-1}\cdot x_1 \in H.
\end{eqnarray}
Since $U$ is unipotent, the exponential map $\exp: \mathfrak{u} \to U$ is a diffeomorphism.  Therefore we may write $u = \exp(y)$ for some  $y = \sum_{ p < q } \c_{pq}(y) E_{pq} \in \mathfrak{u}$.

Now we compute $u^{-1}\cdot x_1$ and $u^{-1}\cdot h$. By properties of the adjoint representation we obtain
\begin{eqnarray*}
u^{-1}\cdot x_1 &=& \Ad(\exp(-y))(x_1) = \exp(\ad_{-y})(x_1) \\
&=& x_1 + \sum_{m=1}^\infty \frac{1}{m!} \ad_{-y}^m (x_1)  
= x_1 + \sum_{m=1}^\infty \sum_{\gamma\in \Phi_{C}} \frac{1}{m!}d_\gamma \ad_{-y}^m (E_\gamma). 
\end{eqnarray*}
Given $\gamma\in \Phi_C$ such that $d_\gamma\neq 0$, we write $\varphi^{-1}(\gamma) = \{\epsilon_i-\epsilon_j, \epsilon_{j'}-\epsilon_{i'}\}$ for some $i,j\in [2n]$ with $i\neq j$.  By Lemma~\ref{lemma.C-to-A-action}, we know that  $E_\gamma\in \C\{E_{ij}, E_{j'i'}\}$ and thus for all $m\geq 1$ we have
\[
\ad_{-y}^m (E_\gamma) \in \C\{E_{k\ell}:  k \leq i, \ell \geq j \,\textup{ or }\, k \leq j', \ell \geq i'  \} \oplus \C\{h_{k\ell} : j \leq k <\ell \leq i  \, \textup{ or } \, i' \leq k <\ell \leq j'  \}.
\]
(As above, $h_{k\ell}=[E_{k\ell},E_{\ell k}]=E_{kk}-E_{\ell\ell}$.) In particular, we see that
\begin{eqnarray}\label{eqn.x1-support}
u^{-1}\cdot x_1 \in \bigoplus_{d_\gamma\neq 0} \, \bigoplus_{\substack{\epsilon_i-\epsilon_j\in \Phi_A\\ \varphi(\epsilon_i-\epsilon_j)=\gamma}} \left( \C\{E_{k\ell} :  k \leq i, \ell \geq j \} \oplus \C\{h_{k\ell} : j \leq k <\ell \leq i \} \right).
\end{eqnarray}
Next, we have that 
\begin{eqnarray*}
u^{-1}\cdot h &=& \Ad(\exp(-y))(h) = \exp(\ad_{-y})(h) 
= h + \sum_{m=1}^\infty \frac{1}{m!} \ad_{-y}^m (h)\\
&=& h + \sum_{m=1}^\infty \, \sum_{p<q } \frac{1}{m!}\c_{pq}(y) (\epsilon_p-\epsilon_q)(h) \ad_{-y}^{m-1}(E_{pq}).
\end{eqnarray*}
Applying similar reasoning as above and using the fact that $p<q$ we have 
\begin{eqnarray}\label{eqn.h-support}
u^{-1}\cdot h - h \in \bigoplus_{\substack{ \epsilon_p-\epsilon_q \in \Phi_A^+\\ (\epsilon_p-\epsilon_q)(h)\neq 0 }} \C\{E_{k\ell} : k\leq p, \ell \geq q  \}.
\end{eqnarray}
Note that $\dot w^{-1}\cdot h\in H_C \subseteq H$ by assumption.  Thus equations~\eqref{eqn.adjoint1},~\eqref{eqn.x1-support} and~\eqref{eqn.h-support} imply that to prove the proposition, it suffices to show 
\begin{eqnarray}\label{eqn.x1-support2}
\quad \quad \bigoplus_{d_\gamma\neq 0} \, \bigoplus_{\substack{\epsilon_i-\epsilon_j\in \Phi_A\\ \varphi(\epsilon_i-\epsilon_j)=\gamma}} \left( \C\left\{E_{w^{-1}_k w^{-1}_\ell} :  k \leq i, \ell \geq j \right\} \oplus \C\left\{h_{w^{-1}_kw^{-1}_\ell} : j \leq k <\ell \leq i \right\} \right)\subseteq H
\end{eqnarray}
and
\begin{eqnarray}\label{eqn.h-support2}
\bigoplus_{\substack{ \epsilon_p-\epsilon_q \in \Phi_A\\ (\epsilon_p-\epsilon_q)(h)\neq 0 }} \C\left\{E_{w^{-1}_kw^{-1}_\ell} : k\leq p, \ell \geq q  \right\} \subseteq H.
\end{eqnarray}

First we establish~\eqref{eqn.x1-support2}.  If $d_\gamma\neq 0$, Lemma~\ref{lemma.typeC-Baction} implies that for each $\beta\in \Phi_C$ with $\beta \ge \gamma$ there exists $b\in B_C$ such that,
	$$
	0\neq \c_\beta(b\cdot x)=\c_{w^{-1}(\beta)}(\dot w^{-1}b\cdot x).
	$$
Our assumption that $B_C\dot w B_C\subseteq \B_C(x, H_C)$ now implies $E_{w^{-1}(\beta)}\in H_C$ and thus $w^{-1}(\beta)\in \Phi_{H_C}$ for all $\beta \geq \gamma$.  Note that by Lemma~\ref{lemma.roots}, for all $\epsilon_k- \epsilon_\ell \in \Phi_A$ with $\epsilon_k-\epsilon_\ell\geq \epsilon_i-\epsilon_j$ we have $\varphi(\epsilon_k-\epsilon_\ell) \geq \gamma$.
This implies $w^{-1}(\varphi(\epsilon_k-\epsilon_\ell)) \in \Phi_{H_C}$ and, since $\varphi$ is $W_C$-equivariant, we have $\varphi(\epsilon_{w^{-1}_k} - \epsilon_{w^{-1}_\ell}) \in \Phi_{H_C}$.  Using the description of $\leq$ for $\Phi_A$ from~\eqref{eqn.partial.order} we have now proved:
\begin{eqnarray*} \label{eqn.typeAroots}
d_{\varphi(\epsilon_i-\epsilon_j)} \neq 0 &\Rightarrow& \epsilon_{w^{-1}(k)} - \epsilon_{w^{-1}(\ell)}  \in \Phi_H \; \textup{ for all } \; k\leq i \; \textup{ and } \; \ell \geq j\\
&\Rightarrow& E_{w^{-1}_kw^{-1}_\ell} \in H \; \textup{ for all } \; k\leq i \; \textup{ and } \; \ell \geq j.
\end{eqnarray*}
It follows that
\begin{eqnarray*}\label{eqn.typeAroots.CSA}
d_{\varphi(\epsilon_i-\epsilon_j)} \neq 0 \textup{ and } i>j &\Rightarrow& 
 E_{w^{-1}_k w^{-1}_\ell}, E_{w^{-1}_\ell, w^{-1}_k} \in H \; \textup{ for all $k,\ell$ such that } \; j \leq k < \ell \leq i \nonumber \\
&\Rightarrow& h_{w^{-1}_kw^{-1}_\ell} \in H \; \textup{ for all $k,\ell$ such that } \; j \leq k <\ell \leq i,
\end{eqnarray*}
where the last implication follows from the fact that $[\fb, H]\subseteq H$.  This concludes the proof of~\eqref{eqn.x1-support2}.

Next we prove~\eqref{eqn.h-support2}.  Fix $\epsilon_p-\epsilon_q \in \Phi_A$ such that $(\epsilon_p-\epsilon_q)(h)\neq 0$.  First, we note that if $\varphi(\epsilon_p-\epsilon_q ) \geq \gamma$ for some $\gamma\in \Phi_C$ such that $d_\gamma\neq 0$, then 
\[
\C\left\{E_{w^{-1}_kw^{-1}_\ell} : k\leq p, \ell \geq q  \right\} \subseteq H
\]
by~\eqref{eqn.x1-support2}.  Thus it suffices to consider the case in which $\varphi(\epsilon_p-\epsilon_q ) \not\geq \gamma$ for any $\gamma \in \Phi_C$ such that $d_\gamma\neq 0$.  
This last assumption implies $E_{pq}$ is not a summand of $u^{-1} \cdot x_1$ for any $u \in U$, i.e., $\c_{pq}(u^{-1}\cdot x_1)=0$ for all $u\in U$.
Consider $u_{pq} := I_n + \bar\sigma(E_{pq}) \in B_C$. Applying Lemma~\ref{lemma.C-to-A-action} and using properties of the adjoint action we have
\begin{eqnarray*}
u_{pq}^{-1}\cdot h =  h - [ \bar{\sigma}(E_{pq}), h] + \sum_{i=2}^{\infty} \frac{1}{i!}\ad_{-\bar\sigma(E_{pq})}^i(h)
=  h + (\epsilon_p - \epsilon_q)(h) \bar{\sigma}(E_{pq}).
\end{eqnarray*}
This implies
\begin{eqnarray*}
u_{pq}^{-1}\cdot x &=& u_{pq}^{-1}\cdot h + u_{pq}^{-1}\cdot x_1 
=  h + (\epsilon_p - \epsilon_q)(h) \bar{\sigma}(E_{pq}) + u_{k\ell}^{-1}\cdot x_1.
\end{eqnarray*}
Since $\bar\sigma(E_{pq}) \in \fg_{\varphi(\epsilon_p - \epsilon_q)}$ and $\c_{pq}(u_{pq}^{-1}\cdot x_1)=0$, it follows that $\c_{\varphi(\epsilon_p - \epsilon_q)}(u_{pq}^{-1}\cdot x) = (\epsilon_p - \epsilon_q)(h)\neq 0$.  
Since $u_{pq}\in B_C$, the assumption $B_C\dot wB_C \subseteq \B_C(x, H_C)$ implies $\dot w^{-1}bu_{k\ell}^{-1}\cdot x \in H_C$  for all $b\in B_C$.
Furthermore, Lemma~\ref{lemma.typeC-Baction} implies that for each $\beta \ge \varphi(\epsilon_p-\epsilon_q)$ there exists $b\in B_C$ such that
	$$
	0\neq \c_\beta(bu_{pq}^{-1}\cdot x)=\c_{w^{-1}(\beta)}(\dot w^{-1}bu_{pq}^{-1}\cdot x)
	$$
and thus we have $w^{-1}(\beta)\in \Phi_{H_C}$ for all $\beta\geq \varphi(\epsilon_p-\epsilon_q)$.  In particular, arguing as in the proof of~\eqref{eqn.x1-support2} we have
\[
E_{w^{-1}_k w^{-1}_\ell}\in H \, \textup{ for all } \, k \leq p \textup{ and } \ell \geq q.
\]
This establishes~\eqref{eqn.h-support2} and completes the proof.
\end{proof}

We conclude this section with the proof of Theorem~\ref{patternc.intro}.

\begin{theorem}\label{patternc}
Let $G=Sp_{2n}(\C)$ and let $B \leq G$ be the Borel subgroup whose image under $\phi$ consists of upper triangular matrices.  Fix $w \in W$.  If there exist $x \in \g=\mathfrak{sp}_{2n}(\C)$ and a Hessenberg space $H_C \subseteq \g$ such that $\B_C(x,H_C)=X_{w^{-1}}^C$, then $\phi^\ast(w)$ avoids the pattern $[4231]$.
\end{theorem}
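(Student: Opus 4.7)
The plan is to assume $\B_C(x,H_C)=X^C_{w^{-1}}$ and suppose for contradiction that $\phi^*(w)$ contains $[4231]$ at positions $i<j<k<\ell$, so $w_\ell<w_j<w_k<w_i$. Let $H\subseteq\mathfrak{sl}_{2n}(\C)$ be the type A Hessenberg space produced from $H_C$ by Theorem~\ref{thm: C to A}. Because $B_C\dot w^{-1}B_C/B_C\subseteq X^C_{w^{-1}}=\B_C(x,H_C)$, Proposition~\ref{prop: cell contained C then A} gives $C^A_{w^{-1}}\subseteq\B_A(x,H)$; in particular the hypothesis of Lemma~\ref{extratau} is in force for the type A Hessenberg variety $\B_A(x,H)$.

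My strategy is to produce $\tau\in W_C$ with $\tau\not\leq_{\br}w$ and $\dot\tau^{-1}B\in\B_A(x,H)$. Given such a $\tau$, the fact that $\tau\in W_C$ makes $\dot\tau^{-1}B$ a $\sigma$-fixed point, so Theorem~\ref{thm: C to A}(2) places it inside $\phi'(\B_C(x,H_C))=\phi'(X^C_{w^{-1}})$, forcing $\tau^{-1}\leq_{\br}w^{-1}$ in the type C Bruhat order; since the type A and type C strong Bruhat orders coincide on $W_C\subseteq S_{2n}$, this contradicts $\tau\not\leq_{\br}w$ and finishes the proof. A direct check using $w_{m'}=w_m'$ shows that the single-swap permutation $w\cdot(j,k)$ from Lemma~\ref{extratau} lies in $W_C$ precisely when $k=j'$, i.e., $j+k=2n+1$; in that self-mirror case I set $\tau:=w\cdot(j,k)$ and invoke Lemma~\ref{extratau} directly. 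Otherwise $\{j,k\}$ and $\{j',k'\}$ are disjoint, and I take the ``double swap'' $\tau:=w\cdot(j,k)\cdot(j',k')\in W_C$; the inequality $\tau\not\leq_{\br}w$ holds because each of the two commuting transpositions creates at least one new inversion (using $w_j<w_k$ and its $\sigma$-image $w_{k'}<w_{j'}$).

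The main obstacle is establishing $\dot\tau^{-1}B\in\B_A(x,H)$ in the double-swap case, which I plan to do by adapting the proof of Lemma~\ref{extratau} and exploiting the $\sigma$-symmetry of $x\in\fg_C$ and $H$. Writing $x=s+x'$ with $s\in\h^\sigma$ and $x'$ the sum of root vectors, the four positional contributions to $\dot\tau\cdot s-\dot w\cdot s$ collapse (via $s_{m'}=-s_m$ and $w_{m'}=w_m'$) to
\[
\dot\tau\cdot s-\dot w\cdot s=(s_j-s_k)\bigl(h^C_{w_k}-h^C_{w_j}\bigr),\qquad h^C_m:=E_{m,m}-E_{m',m'}.
\]
When $s_j\ne s_k$ I run the type C analogue of the $b\in B$ argument from the proof of Lemma~\ref{extratau}: Lemma~\ref{lemma.typeC-Baction} supplies $b,b'\in B_C$ with $\c_{\varphi(\epsilon_i-\epsilon_\ell)}(\dot wb'b\cdot x)\ne 0$, giving $\bar\sigma(E_{w_i w_\ell})\in H_C$, and then Lemma~\ref{lemma.roots}(3) together with the upper-ideal property of $\Phi_{H_C}$ from Lemma~\ref{lemma.upper.order} propagates this to $\bar\sigma(E_{w_k w_j})\in H_C$. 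The type C Chevalley bracket $[\bar\sigma(E_{w_j w_k}),\bar\sigma(E_{w_k w_j})]$ then delivers $h^C_{w_k}-h^C_{w_j}$ up to a nonzero scalar, handling the diagonal piece. The off-diagonal calculation $\dot\tau\cdot x'\in H$ proceeds by an enumeration parallel to the table at the end of the proof of Lemma~\ref{extratau}, with each type A intermediate replaced by its $\bar\sigma$-image and each appeal to Lemma~\ref{lem.linearalg} replaced by Lemma~\ref{lemma.typeC-Baction}; the $\sigma$-symmetric pairing of nonzero type C coefficients $\c_\gamma(x)$ makes the two simultaneous swaps contribute compatibly.
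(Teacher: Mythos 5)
Your overall plan matches the paper's: exhibit $\tau\in W_C$ with $\tau\nleq_{\br}w$ and $\dot\tau^{-1}B\in\B_A(x,H)$, using the single swap $w\cdot(j,k)$ when $j'=k$ and the double swap $\tau=w\cdot(j,k)\cdot(j',k')$ otherwise, and then apply Theorem~\ref{thm: C to A}(2). Your diagonal identity $\dot\tau\cdot s-\dot w\cdot s=(s_j-s_k)(h^C_{w_k}-h^C_{w_j})$ is correct. The gap is in how you establish $\dot\tau\cdot x\in H$ in the double-swap case. You propose to re-run the proof of Lemma~\ref{extratau} entirely in type~C, replacing Lemma~\ref{lem.linearalg} by Lemma~\ref{lemma.typeC-Baction} and type~A intermediates by $\bar\sigma$-images, but two pieces are missing. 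For the diagonal piece you still need the ``claim'' that some $b\in B_C$ gives $\c_{\varphi(\epsilon_j-\epsilon_k)}(b\cdot x)\neq 0$ when $s_j\neq s_k$; this requires a type~C analogue of the explicit-$b$ casework in Lemma~\ref{extratau}'s proof, which you gesture at but do not supply. More seriously, for the off-diagonal piece the table in Lemma~\ref{extratau} covers only pairs $(p,q)$ meeting $\{j,k\}$; the double swap moves $\{j,k,j',k'\}$, so a ``parallel enumeration'' would have to cover all pairs meeting this larger set, and in particular the genuinely new mixed pairs with both $\{p,q\}\cap\{j,k\}\neq\varnothing$ and $\{p,q\}\cap\{j',k'\}\neq\varnothing$. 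Your remark that ``$\sigma$-symmetric pairing\dots\ makes the two simultaneous swaps contribute compatibly'' is precisely the assertion that needs proof in those mixed cases and is not self-evident.

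The paper avoids re-deriving anything in type~C. It applies the statement of Lemma~\ref{extratau} in type~A to \emph{both} single swaps $\tau_1=w\cdot(j,k)$ and $\tau_2=w\cdot(j',k')$ (each is a perfectly good type~A pattern occurrence, even though neither lies in $W_C$). For the diagonal part, $\dot w\cdot s-\dot\tau_1\cdot s\in H$ is obtained from Lemma~\ref{extratau} and then transported to $\tau_2$ by applying $\sigma$ and using $\sigma(H)\subseteq H$ (Lemma~\ref{lemma: H sigma stable}); adding gives $\dot w\cdot s-\dot v\cdot s\in H$. For the off-diagonal part, the key observation you are missing is that $v_pv_q=(\tau_1)_p(\tau_1)_q$ whenever $\{p,q\}\cap\{j',k'\}=\varnothing$, and symmetrically with $\tau_2$, so Lemma~\ref{extratau}'s proof already disposes of every pair except the two mixed configurations $p\in\{j',k'\},q\in\{j,k\}$ and $p\in\{j,k\},q\in\{j',k'\}$; those are handled by a short direct appeal to Lemmas~\ref{lem.linearalg} and~\ref{lem.elsh} using the chain $\ell'<k'<j'<i'$. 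This reuse of the type~A lemma is what makes the paper's argument short, and it is the step your sketch needs to incorporate to close the gap.
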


We remark that since $[4231]$ is its own inverse, this theorem implies that if $\B_C(x,H_C)=X_{w}^C$, then $\phi^\ast(w)$ avoids the pattern $[4231]$.
So Theorem~\ref{patternc} implies Theorem~\ref{patternc.intro}.

\begin{proof}
Suppose $w \in W_C$ contains the pattern $[4231]$.
Seeking a contradiction, suppose there exists $x\in \fg_C$ and a type $C$ Hessenberg space $H_C\subseteq \fg_C$ such that $X_{w^{-1}}^C= \B_C(x,H_C)$.
By Proposition~\ref{prop: cell contained C then A}, $B\dot w^{-1}B\subseteq \B(x, H)$ where $H\subseteq \mathfrak{sl}_{2n}(\C)$ is the type A Hessenberg space defined using $H_C$ as in~\eqref{eqn.Hess.C.to.A} above.
To obtain a contradiction, we show that there exists $v\in W_C$ such that $v\nleq_{\Bru}  w$ and $\dot v^{-1} B\in \B_A(x, H)$.
Given this statement, we would then have by Theorem~\ref{thm: C to A} that $\dot v^{-1} B\in(\B_A(x, H))^\sigma=\B_C(x,H_C)$ contradicting our assumption that $X_{w^{-1}}^C= \B_C(x,H_C)$.

Since $w$ contains the pattern $[4231]$ there exist $i,j,k,\ell$ such that $1\leq i<j<k<\ell\leq 2n$ and $w_\ell<w_j<w_k<w_i$. Consider $\tau=(w_j,w_k)w$ where $(w_j,w_k)$ is the transposition exchanging $w_j$ and $w_k$, so $\tau\in S_{2n}$ is as defined in the statement of Lemma~\ref{extratau}. 
By Lemma~\ref{extratau}, $\dot \tau^{-1}B\in \B(x,H)$, that is, $\dot \tau^{-1}\cdot x\in H$.
If $j'=k$ then $\tau\in W_C$ and taking $v=\tau$ accomplishes the desired goal.
We may therefore assume $j'\neq k$ for the remainder of the proof. 

We have that $1\leq \ell'<k'<j'<i'\leq 2n$ and $w_{i'}<w_{k'}<w_{j'}<w_{\ell'}$.
Let $v=(w_{j'},w_{k'}) \tau = (w_{j'},w_{k'})(w_j,w_k)w$, where $(w_{j'},w_{k'})$ is the transposition exchanging $w_{j'}$ and $w_{k'}$.   Note that $v\nleq_{\Bru}  w$ since $\ell(v)>\ell(w)$ and $v\in W_C$.
In order to argue that $\dot v\cdot x\in H$, we write $x=s+x'$ where 
	$$
	s=\sum_{p\in[2n-1]}c_p(E_{pp}-E_{p+1,p+1})
	\qquad
	\text{and}
	\qquad
	x'=\sum_{\substack{(p,q)\in[2n]\times[2n]\\ p\neq q}}\c_{pq}(x)E_{pq}
	$$
and show $\dot v\cdot s,\dot v\cdot x'\in H$. Note that $\dot w \cdot s, \dot w \cdot x\in H$ by assumption.
From the proof of Lemma~\ref{extratau} we have that $\dot \tau\cdot s\in H$ and
	$$
	\dot w\cdot s-\dot \tau\cdot s =  (c_{j-1}-c_j-c_{k-1}+c_k)(E_{w_kw_k}-E_{w_jw_j})\in H.
	$$
Since $\sigma(H)\subset H$, we see that
	\begin{align*}
	\sigma(\dot w\cdot s-\dot \tau\cdot s) 
	&=
	(c_{j-1}-c_j-c_{k-1}+c_k)(-E_{w_{k'}w_{k'}}+E_{w_{j'}w_{j'}})
	\\&=
	(c_{k'-1}-c_{k'}-c_{j'-1}+c_{j'})(E_{w_{j'}w_{j'}}-E_{w_{k'}w_{k'}})
	\in H,
	\end{align*}
where the last equality follows from the fact that $s\in \h_C$ (see~\eqref{eqn.CSA}).
A direct computation shows that 
\begin{eqnarray*}
	\dot w\cdot s-\dot v\cdot s &=&  (c_{j-1}-c_j-c_{k-1}+c_k)(E_{w_kw_k}-E_{w_jw_j})\\
	&&\quad\quad+(c_{k'-1}-c_{k'}-c_{j'-1}+c_{j'})(E_{w_{j'}w_{j'}}-E_{w_{k'}w_{k'}}),
\end{eqnarray*}
which lies in $H$, and therefore $\dot w\cdot s\in H$ implies $\dot v\cdot s\in H$.

Next we show that $\dot v\cdot x'\in H$ by proving that $E_{v_pv_q}\in H$ whenever $\c_{pq}(x)\neq0$. Let $p\neq q$ be such that $\c_{pq}(x)\neq 0$.  Let $\tau' = (w_{j'},w_{k'})w$, and note that we have $\dot\tau' \cdot x \in H$ by Lemma~\ref{extratau}.  In fact, by the proof of Lemma~\ref{extratau} applied to both $\tau$ and $\tau'$ we have
\[
\{p,q\}\cap \{j',k'\} = \varnothing \Rightarrow E_{v_pv_q} = E_{\tau_p\tau_q} \in H
\]
and
\[
\{p,q\} \cap \{j,k\} = \varnothing  \Rightarrow E_{v_pv_q} = E_{\tau'_p \tau'_q} \in H.
\]
To complete the proof, suppose that $\{p,q\}\cap\{j',k'\}\neq\varnothing$ and $\{p,q\}\cap\{j,k\}\neq \varnothing$. 
If $p\in\{j',k'\}$ and $q\in\{j,k\}$ then $\ell'<p$ and $q<\ell$.
By Lemma~\ref{lem.linearalg} there exists $b\in B$ such that $0\neq \c_{\ell'\ell}(b\cdot x)=\c_{w_{\ell'}w_\ell}(\dot wb\cdot x)$.
Since $b^{-1}\dot w^{-1}B\in C_{w^{-1}}\subset \B(x,H)$, we must have $E_{w_{\ell'}w_\ell}\in H$.
Now by Lemma~\ref{lem.elsh}, since $v_p\le w_{\ell'} $ and $v_q\ge w_\ell$, we have $E_{v_pv_q}\in H$, as desired.
Next, we consider the case in which $p\in\{j,k\}$ and $q\in\{j',k'\}$.
Since $i<p$ and $q<i'$, by Lemma~\ref{lem.linearalg} there exists $b\in B$ such that $0\neq \c_{ii'}(b\cdot x)=\c_{w_{i}w_{i'}}(\dot wb\cdot x)$.
Since $b^{-1}\dot w^{-1}B\in C_{w^{-1}}\subset \B(x,H)$, then $E_{w_{i}w_{i'}}\in H$.
Now by Lemma~\ref{lem.elsh}, by $v_p\le w_{i} $ and $v_q\ge w_{i'}$ we have $E_{v_pv_q}\in H$, as desired.  This concludes the proof.
\end{proof}


\bibliographystyle{alpha}
\bibliography{ref.bib}

\begin{thebibliography}{BBEPP20}

\bibitem[AC16]{Abe-Crooks2016}
Hiraku Abe and Peter Crooks.
\newblock Hessenberg varieties for the minimal nilpotent orbit.
\newblock {\em Pure Appl. Math. Q.}, 12(2):183--223, 2016.

\bibitem[AH19]{Abe-Horiguchi2019}
Hiraku Abe and Tatsuya Horiguchi.
\newblock A survey of recent developments on {H}essenberg varieties, 2019.
\newblock ar{X}iv:1904.11155.

\bibitem[BB05]{Bjorner-Brenti}
Anders Bj\"{o}rner and Francesco Brenti.
\newblock {\em Combinatorics of {C}oxeter groups}, volume 231 of {\em Graduate
  Texts in Mathematics}.
\newblock Springer, New York, 2005.

\bibitem[BBEPP20]{BBPP}
David Bevan, Robert Brignall, Andrew Elvey~Price, and Jay Pantone.
\newblock A structural characterisation of {${\rm Av}(1324)$} and new bounds on
  its growth rate.
\newblock {\em European J. Combin.}, 88:103115, 29, 2020.

\bibitem[BGG73]{BGG}
I.~N. Bern\v{s}te\u{\i}n, I.~M. Gel{'}fand, and S.~I. Gel{'}fand.
\newblock Schubert cells, and the cohomology of the spaces {$G/P$}.
\newblock {\em Uspehi Mat. Nauk}, 28(3(171)):3--26, 1973.

\bibitem[Bor91]{Borel-LAG}
Armand Borel.
\newblock {\em Linear algebraic groups}, volume 126 of {\em Graduate Texts in
  Mathematics}.
\newblock Springer-Verlag, New York, second edition, 1991.

\bibitem[Car72]{Carter}
Roger~W. Carter.
\newblock {\em Simple groups of {L}ie type}.
\newblock John Wiley \& Sons, London-New York-Sydney, 1972.
\newblock Pure and Applied Mathematics, Vol. 28.

\bibitem[CVX20]{Chen-Vilonen-Xue}
Tsao-Hsien Chen, Kari Vilonen, and Ting Xue.
\newblock Hessenberg varieties, intersections of quadrics, and the {S}pringer
  correspondence.
\newblock {\em Trans. Amer. Math. Soc.}, 373(4):2427--2461, 2020.

\bibitem[DMPS92]{DPS1992}
F.~De~Mari, C.~Procesi, and M.~A. Shayman.
\newblock Hessenberg varieties.
\newblock {\em Trans. Amer. Math. Soc.}, 332(2):529--534, 1992.

\bibitem[DMS88]{DS1988}
Filippo De~Mari and Mark~A. Shayman.
\newblock Generalized {E}ulerian numbers and the topology of the {H}essenberg
  variety of a matrix.
\newblock {\em Acta Appl. Math.}, 12(3):213--235, 1988.

\bibitem[FH91]{Fulton-Harris}
William Fulton and Joe Harris.
\newblock {\em Representation theory}, volume 129 of {\em Graduate Texts in
  Mathematics}.
\newblock Springer-Verlag, New York, 1991.
\newblock A first course, Readings in Mathematics.

\bibitem[Fre16]{Fresse2016}
Lucas Fresse.
\newblock Existence of affine pavings for varieties of partial flags associated
  to nilpotent elements.
\newblock {\em Int. Math. Res. Not. IMRN}, (2):418--472, 2016.

\bibitem[Ful97]{Fulton}
William Fulton.
\newblock {\em Young tableaux}, volume~35 of {\em London Mathematical Society
  Student Texts}.
\newblock Cambridge University Press, Cambridge, 1997.
\newblock With applications to representation theory and geometry.

\bibitem[GKM06]{GKM2006}
Mark Goresky, Robert Kottwitz, and Robert MacPherson.
\newblock Purity of equivalued affine {S}pringer fibers.
\newblock {\em Represent. Theory}, 10:130--146, 2006.

\bibitem[Har92]{Harris}
Joe Harris.
\newblock {\em Algebraic geometry}, volume 133 of {\em Graduate Texts in
  Mathematics}.
\newblock Springer-Verlag, New York, 1992.
\newblock A first course.

\bibitem[Hum75]{Humphreys-LAG}
James~E. Humphreys.
\newblock {\em Linear algebraic groups}.
\newblock Springer-Verlag, New York-Heidelberg, 1975.
\newblock Graduate Texts in Mathematics, No. 21.

\bibitem[Hum78]{Humphreys-LieAlg}
James~E. Humphreys.
\newblock {\em Introduction to {L}ie algebras and representation theory},
  volume~9 of {\em Graduate Texts in Mathematics}.
\newblock Springer-Verlag, New York-Berlin, 1978.
\newblock Second printing, revised.

\bibitem[Kum02]{Kumar}
Shrawan Kumar.
\newblock {\em Kac-{M}oody groups, their flag varieties and representation
  theory}, volume 204 of {\em Progress in Mathematics}.
\newblock Birkh\"{a}user Boston, Inc., Boston, MA, 2002.

\bibitem[LR08]{Lakshmibai-Raghavan}
Venkatramani Lakshmibai and Komaranapuram~N. Raghavan.
\newblock {\em Standard monomial theory}, volume 137 of {\em Encyclopaedia of
  Mathematical Sciences}.
\newblock Springer-Verlag, Berlin, 2008.
\newblock Invariant theoretic approach, Invariant Theory and Algebraic
  Transformation Groups, 8.

\bibitem[MT04]{MT2004}
Adam Marcus and G\'{a}bor Tardos.
\newblock Excluded permutation matrices and the {S}tanley-{W}ilf conjecture.
\newblock {\em J. Combin. Theory Ser. A}, 107(1):153--160, 2004.

\bibitem[OY16]{OblomkovYun}
Alexei Oblomkov and Zhiwei Yun.
\newblock Geometric representations of graded and rational {C}herednik
  algebras.
\newblock {\em Adv. Math.}, 292:601--706, 2016.

\bibitem[Pre13]{Precup2013}
Martha Precup.
\newblock Affine pavings of {H}essenberg varieties for semisimple groups.
\newblock {\em Selecta Math. (N.S.)}, 19(4):903--922, 2013.

\bibitem[Pro82]{Proctor1982}
Robert~A Proctor.
\newblock Classical bruhat orders and lexicographic shellability.
\newblock {\em Journal of Algebra}, 77(1):104--126, 1982.

\bibitem[Spr09]{Springer}
T.~A. Springer.
\newblock {\em Linear algebraic groups}.
\newblock Modern Birkh\"{a}user Classics. Birkh\"{a}user Boston, Inc., Boston,
  MA, second edition, 2009.

\bibitem[Tym]{Tymoczko-web}
Julianna~S. Tymoczko.
\newblock Tables of poincare polynomials for nilpotent hessenberg varieties
  (type {A}).

\bibitem[Tym06a]{Tymoczko2006A}
Julianna~S. Tymoczko.
\newblock Hessenberg varieties are not pure dimensional.
\newblock {\em Pure Appl. Math. Q.}, 2(3, Special Issue: In honor of Robert D.
  MacPherson. Part 1):779--794, 2006.

\bibitem[Tym06b]{Tymoczko2006}
Julianna~S. Tymoczko.
\newblock Linear conditions imposed on flag varieties.
\newblock {\em Amer. J. Math.}, 128(6):1587--1604, 2006.

\bibitem[Xue20]{Xue2020}
Ke~Xue.
\newblock {\em Affine {P}avings of {H}essenberg {I}deal {F}ibers}.
\newblock ProQuest LLC, Ann Arbor, MI, 2020.
\newblock Thesis (Ph.D.)--University of Maryland, College Park.

\end{thebibliography}

\end{document}